\crefname{enumi}{item}{items}
\crefname{equation}{}{}
\crefname{figure}{Figure}{Figures}
\crefname{listing}{Source code}{Source codes}
\crefname{lstlisting}{Source code}{Source codes}
\crefname{cor}{Corollary}{Corollaries}
\crefname{subsection}{Subsection}{Subsections}
\newcommand{\black}{\color[rgb]{0,0,0}}
\newcommand{\red}{\color[rgb]{1,0,0}}
\renewcommand{\red}{}
\newcommand{\blue}{\color[rgb]{0,0,1}}
\renewcommand{\blue}{}
\newcommand{\eps}{\varepsilon}
\newcommand{\cF}{\mathcal F}
\newcommand{\cU}{\mathcal U}
\newcommand{\var}{\mathrm{Var}}
\newcommand{\dd}{\mathrm{d}}
\newcommand{\sfrac}[2]{\mbox{$\frac{#1}{#2}$}}
\newcommand{\IV}{\mathbb V}
\newcommand{\IX}{\mathbb X}
\newcommand{\IM}{\mathbb M}
\newcommand{\bX}{\mathbf X}
\newcommand{\bas}[1]{\begin{align}\begin{split} #1
\end{split}\end{align}}
\newcommand{\dtilde}[1]{\tilde{\tilde{#1}}}
\newcommand{\1}{1\hspace{-0.098cm}\mathrm{l}}
\renewcommand{\P}{{\mathbb P}}
\newcommand{\N}{{\mathbb N}}
\newcommand{\Z}{{\mathbb Z}}
\newcommand{\E}{{\mathbb E}}
\newcommand{\R}{{\mathbb R}}
\newcommand{\cK}{{\mathcal K}}
\newcommand{\IU}{{\mathbb U}}
\newcommand{\thetai}{\theta^{(i)}}
\newcommand{\sigmai}{\sigma^{(i)}}
\newcommand{\bx}{\mathbf x}
\theoremstyle{plain}
\newtheorem{theorem}{Theorem}[section]
\newtheorem{prop}[theorem]{Proposition}
\newtheorem{lemma}[theorem]{Lemma}
\newtheorem{defi}[theorem]{Definition}
\theoremstyle{definition}
\newtheorem{rem}[theorem]{Remark}
\NewDocumentCommand{\eg}{ o }{
\IfValueT{#1}{
\str_if_eq:noTF {fe} {#1} {
\bool_gset_true:N \g_forexample
} {\bool_gset_false:N \g_forexample}
}
\bool_if:nTF { \g_forexample } {
\bool_gset_false:N \g_forexample
for~example
}{
\bool_gset_true:N \g_forexample
for~instance
}
}
\NewDocumentCommand{\abbr}{m m O{#1} m m O{#4}}{
	\expandafter\newcommand\csname#3\endcsname[1][]{
		\seq_if_in:NnTF \g_abbrs {#1} {
			\prop_get:NnN \g_abbr_counts {#1} \l_abbr_count_tl
			\prop_gput:Nnx \g_abbr_counts {#1} {\int_eval:n {\l_abbr_count_tl + 1}}
			\hyperref[#1]{#1}
		} {
			\seq_gput_left:Nn \g_abbrs {#1}
			\prop_gput:Nnn \g_abbr_counts {#1} {1}
			\expandafter\gdef\csname#1@def\endcsname{#2}
			\phantomsection\label{#1}
			\str_if_eq:nnTF{##1}{}{\emph{#2}}{##1}~(\hyperref[#1]{#1})
		}
	}
	\expandafter\newcommand\csname#6\endcsname[1][]{
		\seq_if_in:NnTF \g_abbrs {#1} {
			\prop_get:NnN \g_abbr_counts {#1} \l_abbr_count_tl
			\prop_gput:Nnx \g_abbr_counts {#1} {\int_eval:n {\l_abbr_count_tl + 1}}
			\hyperref[#1]{#4}
		} {
			\expandafter\gdef\csname#1@def\endcsname{#5}
			\seq_gput_left:Nn \g_abbrs {#1}
			\prop_gput:Nnn \g_abbr_counts {#1} {1}
			\phantomsection\label{#1}
			\str_if_eq:nnTF{##1}{}{\emph{#5}}{##1}~(\hyperref[#1]{#4})
		}
	}
}
\title{Convergence rates for the Adam optimizer}
\author{Steffen Dereich$^{1}$ and Arnulf Jentzen$^{2,3}$\bigskip\\
\small{$^1$ Institute for Mathematical Stochastics, University of M\"unster,}\vspace{-0.1cm}\\
\small{Germany; e-mail: \texttt{steffen.dereich}\textcircled{\texttt{a}}\texttt{uni-muenster.de}}\smallskip\\
\small{$^2$ School of Data Science and Shenzhen Research Institute of Big Data,}\vspace{-0.1cm}\\
\small{The Chinese University of Hong Kong, Shenzhen (CUHK-Shenzhen),}\vspace{-0.1cm}\\
\small{China; e-mail: \texttt{ajentzen}\textcircled{\texttt{a}}\texttt{cuhk.edu.cn}}\smallskip\\
\small{$^3$ Applied Mathematics: Institute for Analysis and Numerics,}\vspace{-0.1cm}\\
\small{University of M\"unster, Germany; e-mail: \texttt{ajentzen}\textcircled{\texttt{a}}\texttt{uni-muenster.de}}}
\date{\today}
\begin{document}

\maketitle

\begin{abstract}
Stochastic gradient descent (SGD) optimization methods are nowadays the method of choice for the training of 
deep neural networks (DNNs) in artificial intelligence systems. In practically relevant training problems, 
usually not the plain vanilla standard SGD method is the employed optimization scheme but instead 
suitably accelerated and adaptive SGD optimization methods are applied. As of today, maybe the most popular 
variant of such accelerated and adaptive SGD optimization methods 
is the famous Adam optimizer proposed by 
Kingma \& 
Ba in 2014. 
Despite the popularity of the Adam optimizer in implementations, 
it remained an open problem of research to provide a convergence analysis for the Adam optimizer 
even in the situation of simple quadratic stochastic optimization problems where the objective function 
(the function one intends to minimize) is strongly convex. 
In this work we solve this problem by establishing 
optimal convergence rates for the Adam optimizer for a large class of stochastic optimization problems, 
in particular, covering simple quadratic stochastic optimization problems. 
The key ingredient of our convergence analysis is a new vector field function 
which we propose to refer to as the \emph{Adam vector field}. 
This Adam vector field accurately describes the macroscopic behaviour of the Adam optimization process 
but differs from the negative gradient of the objective function 
(the function we intend to minimize) of the considered stochastic optimization problem. 
In particular, our convergence analysis reveals that the Adam optimizer does 
typically not converge to critical points of the objective function 
(zeros of the gradient of the objective function)
of the considered optimization problem but converges with rates 
to zeros of this Adam vector field. 
Even though our convergence results are only formulated for the Adam optimizer, 
the arguments in our convergence analysis can also be applied to other related SGD optimization methods and 
open the door for a systematic mathematical treatment of a large class of adaptive 
and/or accelerated SGD optimization methods. 
\end{abstract}

%

\tableofcontents

\section{Introduction}
\label{sec:introduction}

\SGD[\emph{Stochastic gradient descent}]\ optimization methods 
are nowadays the method of choice for the training of 
deep \ANNs\ in artificial intelligence systems. In practically relevant training problems, 
usually not the plain vanilla standard \SGD\ method is the employed optimization scheme but instead 
suitably accelerated and adaptive \SGD\ optimization methods are considered, 
such as the momentum SGD (cf.\ \cite{Polyak64}),
the Nesterov accelerated SGD (cf.\ \cite{Nesterov83,sutskever2013importance}), 
the Adagrad (cf.\ \cite{DuchiHazanSinger11}),
the RMSprop (cf.\ \cite{HintonSlides}),
the Adadelta (cf.\ \cite{Zeiler12}),
and the Adam (cf.\ \cite{KingmaBa2024_Adam}) 
optimizers. 
As of today, maybe the most popular variant of such accelerated 
and adaptive \SGD\ optimization methods 
is the famous Adam optimizer proposed in 
Kingma \& Ba \cite{KingmaBa2024_Adam}, 
which is essentially a combination of 
the accelerated momentum SGD optimizer 
and the adaptive RMSprop optimizer. 
We also refer, \eg, to the overview articles \cite{ruder2017overviewgradientdescentoptimization,Sun2019} 
and the monograph \cite[Chapters~6 and 7]{JentzenBookDeepLearning2023}
for details and further references on such 
and related accelerated and adaptive \SGD\ optimization methods.

Despite the popularity of the Adam optimizer in implementations, 
it remained an open problem of research to provide a convergence analysis 
for the Adam optimizer even in the situation of simple 
quadratic \SOPs\ (such as
\cref{eq:example_problem} and \cref{eq:thm-1_innovation} below) 
where the objective function 
(the function one intends to minimize) is strongly convex 
(see also below for a brief overview on results in the literature analyzing accelerated and adaptive \SGD\ optimization methods).

In this work we solve this problem by establishing 
optimal convergence rates for the Adam optimizer for a large class 
of \SOPs\ (cf.\ \cref{thm-2} in \cref{sec:main_results} below for the main result of this work), 
in particular, covering simple quadratic \SOPs. 
To outline the findings of this work, within this introductory section we now present 
in \cref{thm-1} below a consequence of our general convergence analysis in the situation of 
the simple examplary quadratic \SOPs\ to minimize the function 
\begin{equation}
\label{eq:example_problem}
  \R^d \ni \theta 
  \mapsto 
  \E[ \ell( \theta, U^{ (1) } ) ]
  \in \R
\end{equation}
with $ U^{ (1) } \colon \Omega \to \R^d $
being an $ \R^d $-valued random variable on a probability space $ ( \Omega, \cF, \P ) $ 
and with the loss function 
$
  \ell \colon \R^d \times \R^d \to \R
$
being\footnote{Note that 
for all $ n \in \N $, $ x = ( x_1, \dots, x_n ) \in \R^n $
it holds that 
$
  | x | = [ \sum_{ k = 1 }^n ( x_k )^2 ]^{ 1 / 2 }
$ is the standard norm of the vector $ x $.} 
the standard mean square error loss 
$
  \ell( \theta, u ) = | \theta - u |^2
$
for 
$
  \theta, u \in \R^d
$. 
We note that in this simple example the negative gradient of $ \ell $ 
with respect to the parameter space variable satisfes that for all 
$
  \theta, u \in \R^d
$
it holds that 
$  
  - ( \nabla_{ \theta } \ell )( \theta, u ) = u - \theta 
$ 
(see \cref{eq:thm-1_innovation} in \cref{thm-1} below).

\begin{samepage}
\begin{theorem}[Convergence rates for the Adam optimizer in a special case] 
\label{thm-1}
Let 
\begin{enumerate}[label=(\roman*)]
\item $ d \in \N $, $\alpha,\beta\in[0,1)$, $\epsilon\in(0,\infty)$ with $\alpha<\sqrt \beta $, 
\item $U=(U^{(i)})_{i\in\N}$ be a sequence of  bounded  i.i.d.\ $ \R^d $-valued random variables, 
\item 
\label{eq:item_zero_sequence}
$ (\gamma_{n})_{n\in\N} $ be a decreasing 
$(0,\infty)$-valued zero-sequence 
with 
$\lim_{n\to\infty}  \frac {\gamma_{n}- \gamma_{n+1}}{\gamma_{n}^{2}} =0$
and
\item for every $M\in\N$, 
\bas{
\label{eq:thm-1_innovation}
X_M \colon ( \R^d )^\infty\times   \R^d\to\R^d, \  ((u^{(i)})_{i\in\N},\theta)\mapsto \frac 1M\sum_{i=1}^M (u^{(i)} -\theta).
}
\end{enumerate}
Then for every $p\in(0,\infty)$, $\xi\in\R^d$  there exist $M_0\in\N$, $\eta\in(0,\infty)$ and a $\R^d$-valued sequence $(\vartheta_M)_{M\in\N\cap[M_0,\infty)}$ such that for every $M\in\N\cap[M_0,\infty)$ the Adam algorithm 
 $(\theta_n)_{n\in\N_0}$ with innovation $ (X_M, U ) $, damping parameters $(\alpha,\beta,\epsilon)$ and step-sizes $ ( \gamma_n )_{ n \in \N } $ 
 started in $ \xi $ 
(cf.\  \cref{def:Adam} below) satisfies  
\begin{enumerate}[label=(\alph*)]
 \item
\label{item_a}
 $\lim_{n\to\infty} \theta_n=\vartheta_M,$ almost surely,
 \item
\label{item_b}
 $\E[|\theta_n-\vartheta_M|^p]^{1/p}\le \eta\sqrt{ \gamma_n}$, for all $n\in\N$, and
 \item
\label{item_c}
 $|\vartheta_M-\E[U^{(1)}]|\le \eta \,M^{-1}$.
 \end{enumerate}
\end{theorem}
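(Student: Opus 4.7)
My plan is to deduce \cref{thm-1} as a direct consequence of the general convergence result \cref{thm-2} stated in \cref{sec:main_results} by specialising it to the innovation family $(X_M)_{M\in\N}$ defined in \cref{eq:thm-1_innovation}, and then to post-process the resulting limit $\vartheta_M$ into the quantitative bias estimate in item \ref{item_c}. The argument therefore splits into two parts: part~(I) verifies the hypotheses of \cref{thm-2} uniformly in $M\ge M_0$ and yields \ref{item_a} and \ref{item_b}; part~(II) carries out a perturbation analysis of the associated Adam vector field near $\mu\coloneqq\E[U^{(1)}]$ and yields \ref{item_c}.

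In part~(I) I would first note that since $U^{(1)}$ is bounded, the innovation $X_M((u^{(i)})_i,\theta)=M^{-1}\sum_{i=1}^M(u^{(i)}-\theta)$ is a bounded affine function of $\theta$ with $M$-independent mean $\E[X_M(U,\theta)]=\mu-\theta$. The limiting field $\theta\mapsto\mu-\theta$ is $1$-strongly dissipative, and after Adam's normalisation $x/(\sqrt v+\epsilon)$ (which is a positive diagonal preconditioner bounded from above by $1/\epsilon$), the associated Adam vector field $\mathcal A_M$ inherits a uniform-in-$M$ dissipativity property on compact neighbourhoods of $\mu$. Combining this with $\alpha<\sqrt\beta$ and the step-size conditions in \ref{eq:item_zero_sequence}, \cref{thm-2} supplies, for every sufficiently large $M$, a zero $\vartheta_M$ of $\mathcal A_M$, the almost-sure convergence $\theta_n\to\vartheta_M$, and the $L^p$ bound in \ref{item_b} with a constant $\eta$ depending on $p,\xi$ but not on $M$.

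Part~(II) is the delicate step and exploits the specific structure of $X_M$ at $\mu$: the innovation has mean zero there, and its standard deviation is of order $M^{-1/2}$. More precisely, writing $X_M(\cdot,\mu)=M^{-1/2}\tilde Z_M$ with $\tilde Z_M$ of order one, the stationary first-moment Adam variable $\mathfrak m$ at $\mu$ satisfies $\E[\mathfrak m]=0$ and $\E[\mathfrak m^2]=O(M^{-1})$, while the stationary second-moment variable $\mathfrak v$ has $\E[\mathfrak v]=O(M^{-1})$. Consequently
\[
  \mathcal A_M(\mu)
  =\E\!\left[\frac{\mathfrak m}{\sqrt{\mathfrak v}+\epsilon}\right]
  =\E\!\left[\mathfrak m\left(\frac{1}{\sqrt{\mathfrak v}+\epsilon}-\frac{1}{\epsilon}\right)\right],
\]
and Taylor expanding the bracket in powers of $\sqrt{\mathfrak v}/\epsilon$ produces a leading contribution of size $\epsilon^{-2}\E[|\mathfrak m|\sqrt{\mathfrak v}]\le\epsilon^{-2}\sqrt{\E[\mathfrak m^2]}\sqrt{\E[\mathfrak v]}=O(M^{-1})$ by Cauchy--Schwarz. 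Hence $|\mathcal A_M(\mu)|\le CM^{-1}$, and the local Lipschitz invertibility of $\mathcal A_M$ at $\mu$ inherited from part~(I) converts this into $|\vartheta_M-\mu|\le \eta M^{-1}$.

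The main obstacle is the uniformity in $M$: the rate constant $\eta$ in \ref{item_b}, the local invertibility bound for $\mathcal A_M$ near $\mu$, and the neighbourhood on which \cref{thm-2} can be invoked must all be chosen independently of $M\ge M_0$. A secondary difficulty lies in the cancellation exploited in part~(II); a crude modulus bound on $\mathcal A_M(\mu)$ gives only $O(M^{-1/2})$, and the gain of an additional factor $M^{-1/2}$ critically depends on the fact that $\E[\mathfrak m]=0$ at $\theta=\mu$, which in turn uses that the innovation $X_M(\cdot,\mu)$ is centred.
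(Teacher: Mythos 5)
Your overall architecture is the paper's: invoke \cref{thm-2} with the constant ODE solution $\Psi\equiv\vartheta_M$ sitting at a zero of the Adam vector field $f_M$, and obtain item \ref{item_c} by combining an $O(M^{-1})$ bound on $|f_M(\mu)|$, $\mu:=\E[U^{(1)}]$, with uniform strict monotonicity of $f_M$ near $\mu$. Your part~(II) is actually \emph{simpler} than the paper's route: writing $f_M(\mu)=\E[\mathfrak m/(\sqrt{\mathfrak v}+\epsilon)]$ and using $\E[\mathfrak m]=0$ together with $|1/(\sqrt{\mathfrak v}+\epsilon)-1/\epsilon|\le \epsilon^{-2}\sqrt{\mathfrak v}$ and Cauchy--Schwarz gives $|f_M(\mu)|\le \epsilon^{-2}(\E[\mathfrak m^2]\,\E[\mathfrak v])^{1/2}=O(M^{-1})$ directly, whereas the paper goes through the second-order expansion of \cref{theo:732846} and must control negative moments of $\IV_M$ via \cref{le:2341} and a CLT small-ball estimate. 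Your cancellation argument avoids the negative-moment issue altogether and is correct.

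The gaps are in part~(I). First, \cref{thm-2} does not ``supply a zero'' of $f_M$: it presupposes an ODE solution and only delivers approximation estimates, so $\vartheta_M$ must be constructed \emph{before} \cref{thm-2} can be applied with $\Psi\equiv\vartheta_M$; the paper does this in \cref{le:12472-2} by the intermediate value theorem, using precisely the smallness of $|f_M(\mu)|$ together with a uniform negative bound on $f_M'$. Second, that uniform-in-$M$ dissipativity is the real work and does not follow from the preconditioner being bounded by $1/\epsilon$: since the preconditioner depends on $\theta$, differentiating $f_M$ produces the competing positive term $\E\bigl[\IM^\alpha_M\IM^\beta_M(\sqrt{\IV_M}+\epsilon)^{-2}\IV_M^{-1/2}\bigr]$ in (\ref{eq8346}), which for fixed damping parameters is not dominated by the negative term via crude bounds (the constant $\frac{1-\alpha}{\sqrt{1-\beta}\sqrt{1-\alpha^2/\beta}}$ can be large); the paper's \cref{le:12472} obtains negativity only through the explicit $M\to\infty$ limit (\ref{eq9823564}), where $\epsilon>0$ is essential, combined with a compactness/contradiction argument uniform over the compact set and $M\ge M_0$. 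Third, the conclusions of \cref{thm-2} carry the stopping time $\mathfrak N$; to remove it you need the Adam iterates to stay in a fixed compact set $V$ uniformly in $M$, which the paper imports as an a priori bound from \cite[Corollary 2.5]{DereichGraeberJentzen2024arXiv_non_convergence} — an ingredient absent from your sketch. Finally, item \ref{item_a} is not a direct output of \cref{thm-2}: it is deduced from \ref{item_b} by choosing $p$ with $\sum_n\gamma_n^{p/2}<\infty$ and a summability argument, which also needs to be said.
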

\end{samepage}

Roughly speaking, \cref{thm-1} establishes that the Adam optimizer applied 
to the simple examplary quadratic \SOP\ in \cref{eq:example_problem} 
converges for a sufficiently large mini-batch size $ M \in \N $ 
almost surely to a random limit point $ \vartheta_M $ 
(see \cref{item_a} in \cref{thm-1}), 
shows that the strong $ L^p $-distance 
between the Adam optimization process and the random limit point $ \vartheta_M $ 
is bounded by a constant multiplied with the square root of the learning rate 
(optimal convergence rate $ \nicefrac{ 1 }{ 2 } $ in dependence of the size of the learning rate) 
(see \cref{item_b} in \cref{thm-1}), 
and proves that the random limit point $ \vartheta_M $ converges 
with convergence rate $ 1 $ to the global minimizer 
of the \SOP\ in \cref{eq:example_problem} 
as the size of the mini-batch $ M $ increases to infinity 
(see \cref{item_c} in \cref{thm-1}). 
We note that the assumption in 
\cref{eq:item_zero_sequence} in \cref{thm-1} 
that the sequence of learning rates $ ( \gamma_n )_{ n \in \N } $ 
is a zero-sequence in the sense that 
\begin{equation}
\label{eq:learning_rates_zero_sequence}
  \limsup_{ n \to \infty } \gamma_n = 0 
\end{equation}
is necessary and can not be avoided. Indeed, in \cite{DereichGraeberJentzen2024arXiv_non_convergence} 
it is proved for several \SOPs\ that every component of the Adam optimizer fails to converge to any possible random point 
if the learning rates do not decay to zero.

\cref{thm-1} is a consequence of the more general convergence analysis for the Adam algorithm 
that we develop within this article. To be more specific, in the main result of this article, 
\cref{thm-2} in \cref{sec:main_results} below, we provide an \ODE\ based 
convergence analysis for the Adam optimizer and, in praticular, we establish 
optimal convergence rates for the Adam optimizer for a large class of \SOPs, 
in particular, covering simple quadratic \SOPs\ (such as \cref{eq:example_problem} above).

The key ingredient of our convergence analysis is a new vector field function 
which we propose to refer to as the \emph{Adam vector field} (cf.\ \cref{def:Adamfield0} below). 
This Adam vector field accurately describes the macroscopic behaviour of the Adam optimization process 
but differs from the negative gradient of the objective function 
(the function we intend to minimize) of the considered \SOP\ (cf.\ \cref{def:Adamfield0,thm-2}). 
In particular, our convergence analysis reveals that the Adam optimizer does 
typically not converge to critical points of the objective function 
(zeros of the gradient of the objective function)
of the considered optimization problem 
(cf.\ \cref{def:linApp}, 
\cref{thm:perturbation_analysis}, 
and \cref{rem:non-convergence}
in \cref{sec:perturbation_analysis} below) 
but converges with rates to zeros of this Adam vector field 
(cf.\ \cref{thm-2}). 
Loosely speaking, in \cref{thm-2} 
we provide for every sufficiently large $ n \in \N $ 
an upper bound for the strong $ L^p $-distance 
between the Adam optimization process at time $ n \in \N $
and the solution process of the \ODE\ with the vector field in the \ODE\ given 
by the Adam vector field 
(cf.\ \cref{eq:Lp_estimate_ODE_Adam} in \cref{thm-2}). 
In particular, if the solution process of the \ODE\ starts in 
a zero of the Adam vector field, 
then the solution process of the \ODE\ reduces 
to a constant function and, in such a situation, 
we have that 
\cref{thm-2} provides 
for every sufficiently large $ n \in \N $ 
an upper bound for the strong $ L^p $-distance 
between the Adam optimization process at time $ n \in \N $ 
and this zero of the Adam vector field 
(cf.\ \cref{eq:Lp_estimate_ODE_Adam} in \cref{thm-2}). 
Even though our convergence results in \cref{thm-1,thm-2} are only 
formulated for the Adam optimizer, 
the arguments in our convergence analysis can also be applied to other SGD optimization methods 
and open the door for a systematic mathematical treatment of a large class of possibly adaptive 
and/or accelerated SGD optimization methods.

\subsection{Literature review}
In the following we supply a brief survey on selected works 
in the literature providing error or regret analyses for 
the Adam optimizer or 
related 
\SGD\ based optimization procedures.

First, we refer, \eg, to 
\cite{CHERIDITO2021101540,jentzen2024nonconvergenceglobalminimizersadam,Lu_2020,gallon2022blowphenomenagradientdescent,ReddiKale2019}
for non-convergence/divergence results or lower error or regret bounds 
for the Adam optimizer and related gradient based optimization procedures. 
In particular, the work Reddi et al.\ \cite[Theorems 1, 2, and 3]{ReddiKale2019} shows 
that for any choice of the damping paramters $ \alpha, \beta \in [0,1) $
with $ \alpha < \sqrt{ \beta } $ there exists a 
convex stochastic optimization problem 
in which (a slightly modified variant of) the Adam optimizer with the damping parameters $ \alpha $ and $ \beta $ 
does not converge to the optimal solution.

Moreover, in the work \cite{DereichGraeberJentzen2024arXiv_non_convergence} 
it is shown for several \SOPs\ that every component of the Adam optimizer 
and related \SGD\ optimization methods fail to converge 
to any possible scalar random variable 
if the learning rate do not decay to zero 
(cf.\ \cref{eq:learning_rates_zero_sequence} above).

Furthermore, in the situation of the training of shallow \ANNs\ with the \ReLU\ activation 
the paper \cite{jentzen2024nonconvergenceglobalminimizersadam} 
proves, in particular, that a class of \SGD\ optimization methos including 
the Adam optimizer as a special case fails to converge to \emph{global minimizers} 
in the ANN optimization training landscape 
(cf.\ also \cite{CHERIDITO2021101540,Lu_2020,gallon2022blowphenomenagradientdescent,MR4243432}). 
The findings in \cite{jentzen2024nonconvergenceglobalminimizersadam} (and \cite{CHERIDITO2021101540,Lu_2020}, respectively) 
do, however, not exclude the possibility that the Adam optimizer converges to a good non-optimal local minimizer 
whose value under the objective function is close to the infimal value of the objective function 
(cf.\ also \cite{GentileWelper2022arXiv,Ibragimovetal2022arXiv,Welper2023arXiv,Welper2024arXiv} for convergence results 
to good non-optimal critical points in simplified shallow \ANN\ training setups).

Moreover, we refer, \eg, to 
\cite{GodichonBaggioni2023,DereichJentzenRiekert2024arXiv_adaptive,
ZhangChen2022,
Barakat_2021_cvg,
li2023convergenceadamrelaxedassumptions,
ReddiKale2019,
hong2024revisitingconvergenceadagradrelaxed,ZouShen2019,Defossez2022,DereichKassing2021arXiv,HeLiangLiuXu2023arXiv,
DuchiHazanSinger11,KingmaBa2024_Adam,Chenetal2018,
ZhangChen2022,ZouShen2019,DingXiaoToh2023arXiv,MR4723898,Huetal2024}
for works providing convergence results or 
(possibly asymptotic) upper error or regret bounds 
for Adam and related \SGD\ based optimization procedures. 
For example, in Reddi et al.\ \cite[Algorithm~2]{ReddiKale2019}
a modified variant of the Adam optimizer 
(referred to as AMSGrad optimizer), 
in which, loosely speaking, the componentwise maximum 
over previously calculated componentwise squared gradients 
is incorporated (so that the non-convergence issue in \cite[Theorems 1, 2, and 3]{ReddiKale2019} can be avoided), 
is proposed and studied. 
In particular, the work \cite[Theorem 4 and Corollary 1]{ReddiKale2019} 
provides upper bounds for the regret of this modified variant 
of the Adam optimizer.

Furthermore, in the situation of suitable strongly convex \SOPs\ the article 
Godichon-Baggioni \& Tarrago~\cite[Theorems~3.1 and 3.2 and Propositions~3.1 and 3.2]{GodichonBaggioni2023} 
provides an error analysis for a general abstract class of adaptive \SGD\ optimization methods, 
which is then specialized for the Adagrad optimizer 
(cf.\ \cite[Theorems~3.4, 4.2, and 5.2]{GodichonBaggioni2023})
and stochastic Newton algorithms 
(cf.\ \cite[Theorems~3.3, 4.1, and 5.1 and Corollary~3.1]{GodichonBaggioni2023}).

Moreover, in the situation of \SOPs\ fulfilling coercivity-type conditions 
the work Dereich et al.~\cite[Corollary~4.11]{DereichJentzenRiekert2024arXiv_adaptive} 
establishes convergence to the optimal solution of the considered \SOP\ for 
certain \SGD\ optimization methods with random learning rates. 
In the case of certain simple quadratic \SOPs\ (cf.\ \cref{eq:example_problem} above) 
the article \cite[Theorem~7.1]{DereichJentzenRiekert2024arXiv_adaptive} 
also establishes convergence to the optimal solution of the considered \SOP\ for 
the \SGD\ method with suitable adaptive learning rates. 

Furthermore, in Dereich \& Kassing~\cite{DereichKassing2021arXiv} 
convergence of possibly accelerated \SGD\ optimization methods 
such as the momentum SGD optimizer to a critical point is shown 
(cf., \eg, also \cite[Section~3]{Huetal2024})
under the assumption that the objective function 
satisfies the Kurdyka--\L ojasiewicz inequality.

An \ODE\ based convergence result for the Adam algorithm 
of a different type than in \cref{thm-2} below 
is given in the work 
Barakat \& Bianchi~\cite{Barakat_2021_cvg}. 
Specifically, in \cite{Barakat_2021_cvg} it is, 
among other things, shown 
that the Adam optimizer with constant learning rates 
$ \forall \, n, m \in \N \colon \gamma_n = \gamma_m $ 
(cf.\ \cref{eq:item_zero_sequence} in \cref{thm-1} above) 
converges to the solution of a suitable non-autonomous 
\ODE\ (see (3.3) and (ODE) in \cite{Barakat_2021_cvg}) 
as the constant learning rate $ \gamma_1 $ 
converges to zero. We note that the Adam vector field 
in this work (see \cref{def:Adamfield0} below) 
is autonomous and significantly different 
to the non-autonomous vector field in 
\cite[(3.3) and (ODE)]{Barakat_2021_cvg}.

We also refer, \eg, to 
\cite[Theorem~1, Corollary~1, Corollary~2, Corollary~3, Theorem~5, and Theorem~6]{MR4723898} 
and 
\cite[Theorem~3.8, Corollary~3.11, Theorem~4.8, Theorem~5.2, and Theorem~5.3]{DingXiaoToh2023arXiv}
and the references therein 
for convergence analyses  
for Adam-type optimizers in the situation of 
non-smooth objective functions. 
In this context, we also refer, \eg, to \cite[Theorem~4.1, Theorem~4.2, and Theorem~6.2]{li2023convergenceadamrelaxedassumptions} 
(and the references mentioned therein) 
for upper regret bounds for the Adam optimizer 
and a variance-reduced variant of the Adam optimizer 
(referred to as VRAdam optimizer) 
under less restrictive global boundedness/growth 
assumptions on the gradient of the objective function.

For further references and more detailed reviews on the literature 
on Adam and related \SGD\ optimization methods 
we also refer, \eg, to \cite{ruder2017overviewgradientdescentoptimization,MR4205063,Eetal2020survey,Sun2019,JentzenBookDeepLearning2023}.

\subsection{Structure of this article}

The remainder of this article is organized as follows. 
In \cref{sec:main_results} we introduce the Adam algorithm in full details 
(cf.\ \cref{def:V} and \cref{def:Adam} below), 
we introduce our concept of the Adam vector field 
(cf.\ \cref{def:Adamfield0} below), 
we present in \cref{thm-2} below the main convergence result of this work, 
and we introduce a suitable space of vector valued sequences, 
which we refer to as Adam sequence space, 
used to formulate \cref{thm-2} (cf.\ \cref{def:spaces} below).

Our main goal in \cref{sec:innovation_Lipschitz} is, loosely speaking, 
to establish that the innovation in the Adam optimizer (cf.\ \cref{def:V})
considered as a function on the Adam sequence space 
is a uniformly bounded non-expansive mapping (weak contraction) 
on the Adam sequence space (cf.\ \cref{le:23456} below).

In \cref{sec:Adam_evolution_with_fixed_theta} 
we analyze the evolution of a simplified variant of 
the Adam optimizer in which we do not employ  
the current state of the Adam optimizer in the recursions for the momentum 
and daming terms 
(cf.\ \cref{def:Adam})
but in which we instead employ a fixed 
predetermined external vector $ \theta \in \R^d $ 
in the recursions for the momentum and daming terms 
(cf.\ \cref{le:842,le:1345} in \cref{sec:Adam_evolution_with_fixed_theta}).

A key idea in our proof for convergence rates 
for the Adam optimizer (cf.\ \cref{thm-1} above and \cref{thm-2} below) is to not study Adam optimization 
processes directly but instead, first, to analyse suitable approximations 
of Adam optimization processes 
and, thereafter, to estimate 
the differences between the Adam optimization processes 
and their approximations. 
Our main subject in \cref{sec:approx_intro} 
is precisely to study such approximations of 
the Adam optimization processes 
(cf.\ \cref{eq:approximation_process_1}, 
\cref{eq:approximation_process_2}, 
\cref{prop:234}, and \cref{le:3571}
in \cref{sec:approx_intro}). 
Our proof of \cref{prop:234} 
(the main result of \cref{sec:approx_intro}) 
employs applications of the results established 
in \cref{sec:Adam_evolution_with_fixed_theta}, 
that is, we employ \cref{le:842,le:1345} from \cref{sec:Adam_evolution_with_fixed_theta} 
in our proof of \cref{prop:234}.

In \cref{sec:technical_estimates} 
we apply the findings of \cref{sec:approx_intro} 
(cf.\ \cref{prop:234,le:3571,le:234787} in \cref{sec:approx_intro}) 
to establish \ODE\ based non-uniform 
(cf.\ \cref{eq24811-0} in \cref{prop35-12}) 
and uniform (cf.\ \cref{eq:uniform_error_analysis} in \cref{prop35-2})
error analyses for the Adam optimizer.

In \cref{sec:proof_main_result} 
we combine the findings 
from \cref{sec:innovation_Lipschitz,sec:technical_estimates} 
(cf.\ \cref{le:23456} in \cref{sec:innovation_Lipschitz} and 
\cref{prop35-12} and \cref{prop35-2} in \cref{sec:technical_estimates}) 
to complete the proof of our main convergence rate result 
for the Adam optimizer in \cref{thm-2} below.

In \cref{sec:perturbation_analysis} we establish 
regularity properties 
(cf.\ \cref{theo:732846} and \cref{le:2341})
and a perturbation analysis  
(cf.\ \cref{thm:perturbation_analysis})
for the Adam vector field.

In \cref{sec:proof_theorem_introduction} 
we combine the regularity properties established in \cref{sec:perturbation_analysis} 
(cf.\ \cref{theo:732846} and \cref{le:2341})
with an application of \cref{thm-2} 
to the simple quadratic \SOP\ in \cref{eq:example_problem} 
to deliver the proof of our specialized 
convergence rate result for the Adam optimizer in \cref{thm-1} above.

\section{Adam sequence space, Adam vector field and main convergence result}
\label{sec:main_results}

In this section, we introduce the central objects and the main theorems of this article. 
For this we denote by $(\Omega,\cF,\P)$ a probability space and by $d\in\N$ the dimension of the underlying problem.
Let us introduce the Adam algorithm with full details. 
 
\begin{defi}[Innovation]\label{def:V}Let $\cU$ be a measurable space.
 A pair $(X,U)$ consisting of
\begin{enumerate}[label=(\roman*)]
\item a $\cU$-valued random variable $U$ and
\item a product measurable mapping $X \colon \cU\times \R^{d}\to \R^{d}$ 
\end{enumerate}
is called \emph{innovation}.
\end{defi}

\begin{defi}[Adam optimizer]\label{def:Adam}Let
\begin{enumerate}[label=(\roman*)]
\item $\alpha,\beta\in[0,1)$, $\epsilon\in(0,\infty)$ with $\alpha<\sqrt \beta$  (the \emph{damping parameters}),
\item $n_0\in\N_{0}$, $\theta_{n_{0}},m_{n_{0}}\in\R^{d}$, $v_{n_{0}}\in[0,\infty)^d$ (the \emph{initialisation}),
\item a decreasing $(0,\infty)$-valued sequence $(\gamma_{n})_{n\in\N}$ (the \emph{sequence of step-sizes}) and
\item $(X,U)$ an innovation.
\end{enumerate}
%
%
An $\R^{d}$-valued stochastic process  $(\theta_{n})_{n\in\N_0\cap[n_{0},\infty)}$ is called \emph{Adam algorithm with damping parameters $(\alpha,\beta,\epsilon)$ and step-sizes $(\gamma_{n})$ started at time $n_{0}$ in $(\theta_{n_0},m_{n_{0}},v_{n_{0}})$}, if it satisfies for every $n\in\N\cap(n_{0},\infty)$ and $i=1,\dots,d$
\bas{\label{eq:Adam}
\thetai_n=\thetai_{n-1}+\gamma_{n}\, \sigmai_{n}\,  m^{(i)}_n,
}
where $(U_{n})_{n\in\N\cap(n_{0},\infty)}$ is a family of independent copies of $U$ and  for $n\in\N\cap(n_{0},\infty)$ and $i=1,\dots,d$,
\begin{itemize}
\item  $m_n=\alpha \,m_{n-1}+(1-\alpha) \,X(U_{n},\theta_{n-1})$,
\item  $v^{(i)}_n= \beta \,v^{(i)}_{n-1} +(1-\beta) \,(X^{(i)}(U_{n},\theta_{n-1}))^{2}$ and
\item  $\displaystyle \sigma^{(i)}_n=\frac1{\sqrt {v_n^{(i)}/(1-\beta^{n})}+\epsilon}$.
\end{itemize}
We call the sequence $(t_{n})_{n\in\N_{0}}$ given by
\bas{\label{def:tn}
t_{n}=\sum_{k=1}^{n} \gamma_{k}
}
the \emph{training times} of the Adam algorithm.
\end{defi}

Technically, the Adam algorithm defines a Markov chain when considering the tuple $(\theta_{n},m_{n},v_{n})_{n\ge n_{0}}$ as the process of states. When doing so one faces the difficulty that  the $\sqrt {v_n^{(i)}}$-term in the definition of $\sigma_{n}^{(i)}$ is not Lipschitz which makes it hard to analyse the system. We bypass that problem by interpreting the Adam algorithm as a delay equation where the evolution is governed by the whole history of innovations that are considered as an element of an infinite dimensional sequence space which we refer to as Adam sequence space. 

\begin{defi}[Adam sequence space]
\label{def:spaces}
Let $ \alpha, \beta \in [0,1) $, $ \epsilon \in (0,\infty) $ 
with $ \alpha < \sqrt{ \beta } $ be damping factors. 
We let \begin{align}\label{def:varrho}
\varrho_{k}=(1-\alpha) \epsilon^{-1}\Bigl(\alpha^{-k}+ 
\frac{1}{\sqrt{1-\alpha^{2}/\beta}}\beta^{-k/2}\Bigr),\qquad\text{ for all $k\in-\N_{0}$},
\end{align}
and denote by $\ell^{d}_{\varrho}$ the space of all sequences $\mathbf x=(x_{k})_{k\in-\N_{0}}\in(\R^{d})^{-\N_{0}}$ with
\bas{
\|\mathbf x\|_{\ell_{\varrho}^{d}}:=\sum_{k\in-\N_{0}} \rho_{k}\, |x_{k}|<\infty.
}
We equip the space with  the norm $\|\cdot\|_{\ell_{\varrho}^{d}}$. 
Moreover, we let for  a tuple $(m,v)\in\R^{d}\times [0,\infty)^{d}$
\bas{
(m,v)_{\ell_{\varrho}^{d}}=\inf\Bigl \{ \|\bx\|_{\ell_{\varrho}^{d}}: \bx\in \ell_{\varrho}^{d},\ &m=(1-\alpha) \sum_{k\in-\N_{0}} \alpha^{-k} x_{k} \text{ \ and } \\
&v^{(i)}=(1-\beta) \sum_{k\in-\N_{0}} \beta^{-k} (x_{k} ^{(i)})^{2}\text{ for }i=1,\dots,d\Bigr\},
}
where the infimum of the empty set is $\infty$.
\end{defi}

Our main result is an ODE approximation for the Adam algorithm. The respective vector field driving the ODE is defined in the following definition.

\begin{defi}[Adam vector field] 
\label{def:Adamfield0} For a tuple $(\alpha,\beta,\epsilon)$ of damping parameters  and an innovation $(X,U)$ we call the mapping
$
f:\R^{d}\to\R^{d}
$
given by
\bas{\label{def:Adamfield}
f^{(i)}(\theta)=(1-\alpha) \,\E^{\theta}\Bigl[ 1/\Bigl(\sqrt{(1-\beta) \sum_{k\in-\N_0} \beta^{-k} X^{(i)}(U_{k},\theta)^{2}}+\epsilon\Bigr)  \sum_{k\in-\N_0} \alpha^{-k} X^{(i)}(U_k,\theta)\Bigr]
}
with $(U_{k})_{k\in-\N_{0}}$ being a family of independent copies of $U$, the \emph{Adam vector field of the innovation $(X,U)$ for the damping parameters $(\alpha,\beta,\epsilon)$}.
\end{defi}

\begin{theorem}[Convergence rates for the Adam optimizer]  
\label{thm-2}
Let 
\begin{enumerate}[label=(\Roman*)]
\item $\alpha,\beta\in[0,1)$ with $\alpha<\sqrt \beta$ and $\epsilon\in(0,\infty)$ (the \emph{damping parameters}),
\item a decreasing $(0,\infty)$-valued zero-sequence $(\gamma_{n})_{n\in\N}$ (\emph{sequence of step-sizes}), 
\item $\cK,c_{1},c_{2}\in[0,\infty)$ and $p\in(2,\infty)$,
\item {\red $\|\cdot\|$ a norm on $\R^{d}$ being induced by a scalar product $\llangle\cdot,\cdot\rrangle$} 
\end{enumerate}
and suppose that
\bas{\label{eq87236455}
\limsup_{n\to\infty}\frac { \gamma_{n}- \gamma_{n+1}}{ \gamma_{n}^{2}}\le 2c_{2}<2c_{1}.
}
Then for every $c\in (0,c_{1}-c_{2})$ and $\varepsilon\in(0,\infty)$ there exist $\mathfrak n\in\N_{0}$ and $\eta\in(0,\infty)$ so that the following statement is true.\smallskip

\begin{enumerate}[label=(\roman*)]
\item \emph{Innovation.} Let $(X,U)$ be  an innovation and $V\subset \R^{d}$ a measurable set such that for every $\theta,\theta'\in V$
\bas{\label{def:ellrho} \E[|X(U,\theta)|^{p}]^{1/p}\le \cK\text{ \ and \ } \E[|X(U,\theta)-X(U,\theta')|^{p}]^{1/p} \le \cK  \,|\theta-\theta'|. 
}
\item \emph{Adam algorithm.} Let $n_{0}\in\N_{0}\cap [\mathfrak n,\infty)$ and $(\theta_{n})_{n\ge n_{0}}$ be an Adam algorithm with innovation $(X,U)$ started at time $n_{0}$ in a state $(\theta_{n_0},m_{n_{0}},v_{n_{0}})\in\R^{d}\times\R^{d}\times [0,\infty)^{d}$ so that 
\bas{\label{eq34675}
\sqrt{\gamma_{n_{0}+1}} \,(m_{n_{0}},v_{n_{0}})_{\ell_{\varrho}^{d}}\le \cK.
}
\item \emph{ODE.} 
Let $f:\R^d\to\R^d$ the Adam vector field of the innovation $(X,U)$ for the damping parameters $(\alpha,\beta,\epsilon)$  and $\Psi:[t_{n_{0}},\infty)\to\R^{d}$ a solution to the ODE
$$
\dot \Psi_{t} = f(\Psi_t)
$$
staying in $V\subset\R^{d}$ and satisfying
 \bas{\label{eq87345}
 \|\theta_{n_{0}}-\Psi_{t_{n_{0}}}\| \le\cK.
}

\item \emph{Local monotonicity of ODE.}
Let $(\mathfrak R_{t})_{t\ge t_{n_{0}}}$ be a $(0,\infty]$-valued function so that one has, for all $t\in[ t_{n_{0}},\infty)$ and $x\in V\cap \overline{B(\Psi_{t},\mathfrak R_{t})}$ , that
\bas{{\red
\llangle f(x)-f(\Psi_{t}),x-\Psi_{t}\rrangle\le -c_{1} \|x-\Psi_{t}\|^{2}.}
}
\item Let $(R_{s})_{s\ge t_{n_{0}}}$ be a decreasing $(0,\infty)$-valued function such that for every $s\ge t_{n_{0}}$
\bas{
R_{s}/R_{s+1}\le \cK. 
}
\end{enumerate}
Let
\bas{
\mathfrak N=\inf\{n\ge n_{0}: \theta_{n}\not\in V \text{ \ or \ }{\red \|\theta_{n}-\Psi_{t_{n}}\|>\mathfrak R_{t_{n}}}\}.
}
One has, for all $n\in \N_{0}\cap [n_{0},\infty)$, that
\bas{
\label{eq:Lp_estimate_ODE_Adam}
\E[\1_{\{\mathfrak N\ge n\}}{\red \|\theta_{n}-\Psi_{t_{n}}\|^{p}}]^{1/p}\le \Bigl(\eta +\Bigl((1+\varepsilon)\frac{{\red \|\theta_{n_{0}}-\Psi_{t_{n_{0}}}\|}}{\sqrt{\gamma_{n_{0}+1}}}+\eta \sqrt{(m_{n_{0}},v_{n_{0}})_{\ell_{\varrho}^{d}}}\Bigr) e^{-c (t_{n}-t_{n_{0}})}\Bigr)\sqrt{\gamma_{n+1}}
}
and 
{\red\bas{
\E\Bigl[&\Bigl( \sup_{n:n_{0}\le n\le \mathfrak N}   \frac{\|\theta_{n}-\Psi_{t_{n}}\|}{R_{t_{n}}}\Bigr)^{p} \Bigr]\\
&\le \eta \int_{t_{n_{0}}}^{\infty} R_{s}^{-p} \Bigl(1+\Bigl(\frac{\|\theta_{n_{0}}-\Psi_{t_{n_{0}}}\|}{\sqrt{\gamma_{n_{0}+1}}}+\sqrt{(m_{n_{0}},v_{n_{0}})_{\ell_{\varrho}^{d}}}\Bigr) e^{-c(s-t_{n_{0}})} \Bigr)^{p} \,\Gamma_{s}^{(p-1)/2}\,\dd s,
}}
where
$(\Gamma_{t})_{t>0}$ is the real-valued function given by
$ \Gamma_{t}=\gamma_{n} $, for $n\in\N$ and $t\in(t_{n-1},t_{n}]$.
\end{theorem}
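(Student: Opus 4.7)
The theorem is essentially the concluding synthesis of the machinery built throughout the preceding sections, so the plan is to verify the hypotheses of the two error analyses developed in \cref{sec:technical_estimates}---the non-uniform bound in \cref{prop35-12} and the uniform (maximal) bound in \cref{prop35-2}---and then to read off the two inequalities asserted in the statement. Specifically, I would define $\mathfrak n$ and $\eta$ as the maxima of the corresponding constants supplied by those two propositions, use \cref{le:23456} to translate the $L^{p}$-boundedness and $L^{p}$-Lipschitz assumptions on the innovation $(X,U)$ into the non-expansive-mapping hypothesis on the Adam sequence space $\ell_{\varrho}^{d}$, and reinterpret the initialisation bound \eqref{eq34675} as a control of the starting state in $\ell_{\varrho}^{d}$ in the form demanded by \cref{prop35-12,prop35-2}.

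For the first, non-uniform bound I would apply \cref{prop35-12} with the stopping time $\mathfrak N$ as given. The mechanism behind that proposition is a discrete comparison between $\theta_n$ and $\Psi_{t_n}$: on the event $\{\mathfrak N\ge n\}$ the local monotonicity assumption $\llangle f(x)-f(\Psi_{t}),x-\Psi_{t}\rrangle\le -c_{1}\|x-\Psi_{t}\|^{2}$ produces, up to a noise correction, a one-step contraction factor of roughly $(1-2c_{1}\gamma_{n})$ in $\|\theta_n-\Psi_{t_n}\|^{2}$. The step-size condition \eqref{eq87236455} limits how rapidly the effective contraction rate can erode across successive steps, so that the product of these factors telescopes into $e^{-c(t_{n}-t_{n_{0}})}$ for any $c<c_{1}-c_{2}$, while the additive $O(\sqrt{\gamma_{n+1}})$ term arises from summing the variances of the fluctuations of $X(U_{k},\cdot)$ around its Adam-averaged drift $f$, with the $\sqrt{(m_{n_{0}},v_{n_{0}})_{\ell_{\varrho}^{d}}}$ contribution accounting for the residual influence of the initial momentum and damping buffers.

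For the second, uniform bound I would invoke \cref{prop35-2} and evaluate it along the rescaled sequence $R_{t_{n}}^{-1}\|\theta_{n}-\Psi_{t_{n}}\|$. The integral kernel $\Gamma_{s}^{(p-1)/2}$ is the signature of a Burkholder--Davis--Gundy estimate applied to the martingale portion of $\theta_{n}-\Psi_{t_{n}}$, whose predictable quadratic variation up to $t_{n}$ is of order $\sum_{k\le n}\gamma_{k}^{2}\le \Gamma_{t_{n}}\, t_{n}$; integrating the resulting maximal increments against $R_{s}^{-p}\,\dd s$ over $[t_{n_{0}},\infty)$ then yields the stated bound. The principal obstacle throughout is that the $\sigma_{n}$-factor in the Adam update depends on the entire history of innovations through the non-Lipschitz map $v\mapsto 1/(\sqrt v+\epsilon)$, which is exactly what forces the argument through the Adam sequence space and the frozen-$\theta$ approximations of \cref{sec:approx_intro} (via \cref{prop:234}); a subtler point, already addressed inside \cref{prop35-12,prop35-2}, is to ensure that the stopping-time localisation $\mathbf 1_{\{\mathfrak N\ge n\}}$ is compatible with both the Gronwall step and the BDG step, which relies on using $\{\mathfrak N\ge n\}$ as an event measurable at the correct moment of the filtration generated by $(U_{k})_{k\le n}$.
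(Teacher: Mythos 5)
The proposal correctly identifies the two propositions from \cref{sec:technical_estimates} as the engine of the proof and describes the contraction--plus--noise mechanism in broadly accurate terms, but it glosses over three genuine technical obstacles that the paper spends most of its proof on, and it misattributes what each proposition is used for.

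First, you attribute the pointwise bound \cref{eq:Lp_estimate_ODE_Adam} to \cref{prop35-12} alone, but that proposition only controls the error $e_\ell$ at the partition points $n_\ell$ of a $\rho$-partition, not at arbitrary $n$. To get a bound valid for all $n\ge n_0$, the paper combines the $n_\ell$-bound from \cref{prop35-12} with the \emph{maximal} bound of \cref{prop35-2} over each window $[n_{\ell-1}, n_\ell]$ (step~5 of the proof). So \cref{prop35-2} is needed for \emph{both} conclusions, not only the second one; your sketch would stop at the partition points. Second, the recursion produced by \cref{prop35-12},
\begin{align*}
e_{\ell}\le (1-2c'(t_{n_{\ell}}-t_{n_{\ell-1}}) )\, e_{\ell-1} +\bigl(K(2 \sqrt{e_{\ell-1}} +\delta_{2}) a_{\ell}+b_{\ell}\bigr)(t_{n_{\ell}}-t_{n_{\ell-1}}),
\end{align*}
is nonlinear in $e_{\ell-1}$ through the $\sqrt{e_{\ell-1}}$ term, so it does not ``telescope'' into $e^{-c(t_n-t_{n_0})}$ directly. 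The paper first derives an \emph{a priori} uniform bound on $\sup_\ell e_\ell$ by observing that the accumulated estimate forces $\sqrt{\mathcal M_L}$ to lie below the right root of a quadratic, and only after inserting this bound does the linear Gronwall-type telescoping go through. This step is not a routine detail---without it one does not obtain the explicit form of the prefactor involving $\sqrt{(m_{n_0},v_{n_0})_{\ell_\varrho^d}}$. Third, your appeal to ``define $\mathfrak n$ and $\eta$ as the maxima of the corresponding constants'' skips the delicate parameter-selection step (choosing $\zeta > 2c_2$, $\rho$, $c'\in(c,c_1-c_2)$, $\delta$, $\delta_1$, $\delta_2$, and $\mathfrak n$ so that hypotheses (a)--(f) of \cref{prop35-12} hold, including the condition $2c_1 - L^2\rho\sqrt{\gamma_{\mathfrak n+1}} - \zeta K \ge 2c'$), which is what makes the rate constant $c$ in the theorem attainable for any $c < c_1 - c_2$. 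Your description of the second conclusion is also slightly off: the $\Gamma_s^{(p-1)/2}$ kernel arises from converting a sum over partition windows (each contributing $\gamma_{n_\ell+1}^{p/2}$ over an interval of length $\approx\rho\sqrt{\gamma_{n_{\ell-1}+1}}$) into an integral, not directly from a BDG quadratic-variation count. With these three points addressed, your outline would match the paper's proof.
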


\section{The innovation viewed as a non-expansive mapping on the Adam sequence space}
\label{sec:innovation_Lipschitz}

Let $ \alpha, \beta, \epsilon \in [0,\infty) $ with $ \alpha < \sqrt{\beta} < 1 $ 
and let  $(\varrho_{k})_{k\in-\N_{0}}$ as in \cref{def:varrho}.
In this section our main goal is to analyse the mapping
\bas{\label{eq872346}
g \colon \ell_{\varrho}^{d}\to\R^d, 
\qquad  
  \mathbf x=( x_k )_{ k \in -\N_0 }
  \mapsto 
  \Biggl( 
    \frac{
      (1-\alpha) \sum_{ k \in - \N_0 } \alpha^{ - k } x^{ (i) }_k
	}{
	  \epsilon 
	  + 
	  \sqrt{ 
	    (1 - \beta) \sum_{ k \in - \N_0 } \beta^{ - k } ( x^{(i)}_k )^2
	  }
	}
  \Biggr)_{ i \in \{ 1, \dots, d \} }
}
(see \cref{le:23456} below). 
First, we note that $g$ is well defined since for every $\mathbf x\in\ell_{\varrho}^d$ 
and $i\in\{1,\dots,d\}$ we have that the series in the nominator in \cref{eq872346} is summable. 
The denominator in \cref{eq872346} 
may attain the value $\infty$ in which case $g$ is defined to be zero.

\begin{lemma}
\label{le:23456}
Let $ \alpha, \beta, \epsilon \in [0,\infty) $ with $ \alpha < \sqrt{\beta} < 1 $ 
and let $ ( \varrho_k )_{ k \in - \N_0 } $ 
and  $g$ as defined in \cref{def:varrho} and \cref{eq872346}, respectively.  Then $g$ is Lipschitz continuous with Lipschitz constant $1$ and $ g $ is uniformly bounded by $\sqrt d \frac {1-\alpha}{\sqrt {1-\beta}}\frac1{\sqrt{1-\alpha^{2}/\beta}} $.
\end{lemma}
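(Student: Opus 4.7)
\medskip
\noindent\textbf{Plan.} Since $g^{(i)}(\mathbf x)$ depends only on the scalar sequence $(x_{k}^{(i)})_{k\in-\N_{0}}$, it suffices to prove, in the scalar case $d=1$, the two pointwise estimates $|g(\mathbf x)|\le(1-\alpha)/\sqrt{(1-\beta)(1-\alpha^{2}/\beta)}$ and $|g(\mathbf x)-g(\mathbf y)|\le\sum_{k\in-\N_{0}}\varrho_{k}|x_{k}-y_{k}|$. Indeed, the former bound squared and summed over $i$ gives $|g(\mathbf x)|^{2}\le d\,(1-\alpha)^{2}/[(1-\beta)(1-\alpha^{2}/\beta)]$, and the componentwise Lipschitz estimate combined with Minkowski's inequality for the $\ell^{2}$-norm in $i$ of a sum in $k$ yields
\[
|g(\mathbf x)-g(\mathbf y)|\le\sum_{k\in-\N_{0}}\varrho_{k}\,\Bigl(\sum_{i=1}^{d}(x_{k}^{(i)}-y_{k}^{(i)})^{2}\Bigr)^{1/2}=\|\mathbf x-\mathbf y\|_{\ell_{\varrho}^{d}}.
\]

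\medskip
\noindent\textbf{Uniform bound in the scalar case.} Factor $\alpha^{-k}=(\alpha^{2}/\beta)^{-k/2}\beta^{-k/2}$ and apply Cauchy--Schwarz to obtain $(\sum_{k}\alpha^{-k}x_{k})^{2}\le(\sum_{k}(\alpha^{2}/\beta)^{-k})\cdot(\sum_{k}\beta^{-k}x_{k}^{2})=(1-\alpha^{2}/\beta)^{-1}\sum_{k}\beta^{-k}x_{k}^{2}$, where the geometric sum converges precisely because $\alpha<\sqrt{\beta}$. Substituting this into the formula for $g$ and using the trivial inequality $\sqrt{A}/(\epsilon+\sqrt{A})\le 1$ with $A=(1-\beta)\sum_{k}\beta^{-k}x_{k}^{2}$ delivers the uniform bound.

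\medskip
\noindent\textbf{Lipschitz in the scalar case.} Write $g=N/D$ with $N(\mathbf x)=(1-\alpha)\sum_{k}\alpha^{-k}x_{k}$, $S(\mathbf x)=\sqrt{(1-\beta)\sum_{k}\beta^{-k}x_{k}^{2}}$, and $D(\mathbf x)=\epsilon+S(\mathbf x)$, and decompose
\[
g(\mathbf x)-g(\mathbf y)=\frac{N(\mathbf x)-N(\mathbf y)}{D(\mathbf x)}+\frac{N(\mathbf y)}{D(\mathbf y)}\cdot\frac{D(\mathbf y)-D(\mathbf x)}{D(\mathbf x)}.
\]
Using $D\ge\epsilon$ and the triangle inequality, the first summand is at most $\epsilon^{-1}(1-\alpha)\sum_{k}\alpha^{-k}|x_{k}-y_{k}|$. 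For the second summand, the norm property of $S$ gives $|D(\mathbf y)-D(\mathbf x)|=|S(\mathbf y)-S(\mathbf x)|\le S(\mathbf x-\mathbf y)$; combining this with $D\ge\epsilon$, the previous step's bound on $|N(\mathbf y)|/D(\mathbf y)$, and the elementary $\sqrt{\sum a_{k}^{2}}\le\sum|a_{k}|$, I obtain $\epsilon^{-1}(1-\alpha)(1-\alpha^{2}/\beta)^{-1/2}\sum_{k}\beta^{-k/2}|x_{k}-y_{k}|$, the $\sqrt{1-\beta}$ factors having cancelled. Summing the two contributions reproduces exactly the weight $\varrho_{k}$ from \cref{def:varrho}.

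\medskip
\noindent\textbf{Main obstacle.} The mildly subtle point is the second summand: one must pair the denominator discrepancy $|D(\mathbf y)-D(\mathbf x)|$ with the already-uniform-bounded ratio $|N(\mathbf y)|/D(\mathbf y)$ rather than with $|N(\mathbf y)|$ alone, for otherwise no $\ell^{1}$-type weighted estimate would suffice. The hypothesis $\alpha<\sqrt{\beta}$ is precisely what makes the underlying Cauchy--Schwarz estimate convergent, and the two resulting weights $\alpha^{-k}$ and $\beta^{-k/2}$ are by design summed with exactly the prefactors appearing in the definition of $\varrho_{k}$.
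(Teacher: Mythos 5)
Your proof is correct. The uniform bound is obtained exactly as in the paper, via the Cauchy--Schwarz split $\alpha^{-k} = (\alpha/\sqrt\beta)^{-k}\beta^{-k/2}$ together with $\sqrt{v}/(\sqrt{v}+\epsilon)\le 1$, so there is nothing to compare there. The Lipschitz estimate, however, is established by a genuinely different route. The paper fixes finitely supported sequences, computes the partial derivative $\partial_{x_k}g(\mathbf x)$, bounds $|\partial_{x_k}g(\mathbf x)|\le\varrho_k$ using the same two ingredients, then telescopes one coordinate at a time via the mean value theorem and finally passes to the limit for general $\mathbf x,\tilde{\mathbf x}\in\ell_\varrho$. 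You instead use the algebraic decomposition
\[
g(\mathbf x)-g(\mathbf y)=\frac{N(\mathbf x)-N(\mathbf y)}{D(\mathbf x)}+\frac{N(\mathbf y)}{D(\mathbf y)}\cdot\frac{D(\mathbf y)-D(\mathbf x)}{D(\mathbf x)},
\]
bound the first summand by $D\ge\epsilon$ and the triangle inequality, and bound the second by pairing $|D(\mathbf y)-D(\mathbf x)|\le S(\mathbf x-\mathbf y)$ (reverse triangle inequality for the weighted $\ell^2$-norm $S$) with the already-established uniform bound on $|N(\mathbf y)|/D(\mathbf y)$, then discretise via $\sqrt{\sum a_k^2}\le\sum|a_k|$. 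Both approaches arrive at precisely the same two geometric weights $\alpha^{-k}$ and $\beta^{-k/2}$ and hence the same $\varrho_k$; this is not a coincidence but reflects that the two contributions (numerator variation, denominator variation) are the same in either bookkeeping. Your argument has the advantage of working directly on $\ell_\varrho$ without a truncation-and-limit step and without invoking differentiability; one does need to note, as your argument implicitly does, that $\mathbf x\in\ell_\varrho$ forces $S(\mathbf x)<\infty$ (since $\varrho_k\gtrsim\beta^{-k/2}$ implies $\beta^{-k/2}|x_k|\to0$, whence $\sum_k\beta^{-k}x_k^2<\infty$), so the convention $g=0$ when the denominator is infinite is never triggered on the domain $\ell_\varrho$. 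The paper's approach is closer in spirit to its later use of the partial-derivative bound $|\partial_{x_k}g|\le\varrho_k$ (e.g.\ in \cref{prop:234}), so the redundancy there is the trade-off. The passage to $d>1$ via Minkowski's inequality for the $\ell^2$-norm of a sum is also valid and equivalent to the paper's one-column-at-a-time telescoping.
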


\begin{proof} 
We first assume that $d=1$ and briefly write $\ell_{\varrho}=\ell_{\varrho}^{1}$ and $\|\cdot\|_{\ell_{\varrho}}=\|\cdot\|_{\ell_{\varrho}^{1}}$. 
For $\mathbf x\in\ell_{\varrho}$, let
\bas{\label{eq:78235653}
m(\bx)=(1-\alpha) \sum_{k\in -\N_{0}} \alpha^{-k} x_{k}
\text{ \ and \ }
v(\bx)=(1-\beta) \sum_{k\in -\N_{0}} \beta^{-k} x_{k}^{2}
}
so that 
\bas{
g(\mathbf x)=\frac{m(\bx)}{\sqrt {v(\bx)}+\epsilon}.
}
Using Cauchy-Schwarz and the fact that $\alpha<\sqrt \beta$ we get that for all $\bx\in\ell_{\varrho}$
\begin{align}\begin{split}\label{mv:est}
|m(\bx)|&=\Bigl|(1-\alpha) \sum_{k=-\infty}^{0} (\alpha/\sqrt\beta)^{-k}   \sqrt{\beta}^{-k} x_{k}\Bigr|\le (1-\alpha)\Bigl(\sum_{k=0}^{\infty} (\alpha/\sqrt\beta)^{2k}\Bigr)^{1/2} \Bigl(\sum_{k=-\infty}^{0}  \beta^{-k} x_{k}^{2} \Bigr)^{1/2} \\
&\le \frac {1-\alpha}{\sqrt {1-\beta}}\frac1{\sqrt{1-\alpha^{2}/\beta}} \sqrt {v(\bx)}
\end{split}\end{align}
which entails the uniform bound for $g$ that is stated in the lemma for $d=1$. The multivariate estimates follows immediately by applying this estimate onto the individual components.

To prove the Lipschitz continuity we again focus at first on the case $d=1$. Fix $n\in\N$ and let $\bx=(x_{0},\dots,x_{-n},0,\dots)\in\ell_{\varrho}$. One has for all $k=0,\dots,-n$ that
\bas{
\partial _{x_{k}} g(\bx) = (1-\alpha)\alpha^{-k}\frac1{\sqrt{v(\bx)}+\epsilon}- (1-\beta)\beta^{-k}\frac {m(\bx)}{(\sqrt {v(\bx)}+\epsilon )^{2}}\frac1{\sqrt {v(\bx)}} x_{k}
}
so that by using (\ref{mv:est}) and $\sqrt {v(\bx)}\ge \sqrt{1-\beta}\beta^{-k/2}|x_{k}|$ we get that
\bas{\label{eq:78345534}
|\partial _{x_{k}} g(\bx)| \le (1-\alpha)\epsilon^{-1}\alpha^{-k}+ 
\frac{1-\alpha}{\sqrt{1-\alpha^{2}/\beta}}\epsilon^{-1}\beta^{-k/2}=\varrho_{k}.
}
For another $\tilde \bx=(\tilde x_{0},\dots,\tilde x_{-n},0,\dots)\in\ell_{\varrho}$ we let, for $\ell=0,\dots,n+1$,
\bas{\bx^{\ell}=(\tilde x_{0},\dots,\tilde x_{-\ell+1},x_{-\ell},\dots,x_{-n},0,\dots).}
Note that $\bx^{0}=\bx$ and $\bx^{n+1}=\tilde\bx$ and we get with the triangle inequality and the mean value theorem that
\bas{\label{eq:43985638}
|g(\bx)-g(\tilde \bx)|\le \sum_{\ell=0}^{n} |g(\bx^{\ell})-g(\bx^{\ell+1})|\le \sum_{\ell=0}^{n} \varrho_{-\ell} |x_{-\ell}-\tilde x_{-\ell}|=\|\bx-\tilde\bx\|_{\ell_{\varrho}}.
}
This inequality remains true for general $\bx,\tilde\bx\in\ell_{\varrho}$ since
\bas{
g(\bx)=\lim_{n\to\infty} g(x_{0},\dots,x_{-n},0,\dots).
}
Indeed, this entails that
\bas{
|g(\bx)-g(\tilde \bx)|\le \limsup_{n\to\infty} |g(x_{0},\dots,x_{-n},0,\dots)-g(\tilde x_{0},\dots,\tilde x_{-n},0,\dots)|\le \|\bx-\tilde\bx\|_{\ell_{\varrho}}.
}

\red
To obtain the result in the multivariate setting we proceed as above. Fix $n\in\N$,  $\bx=(x_{0},\dots,x_{-n},0,\dots)\in\ell_{\varrho}^d$ and $\tilde \bx=(\tilde x_{0},\dots,\tilde x_{-n},0,\dots)\in\ell_{\varrho}^d$ and define for every  $\ell=0,\dots,n+1$,
 $\mathbf x^\ell\in \ell_\varrho^d$ accordingly. Then one has for every  $\ell=0,\dots,n$ that
\bas{
|g(\bx^{\ell})-g(\bx^{\ell+1})|= \Bigl(\sum_{i=1}^d |g(\bx^{\ell,(i)})-g(\bx^{\ell+1,(i)})|^2\Bigr)^{1/2}\le \varrho_{-\ell} \Bigl(\sum _{i=1}^d |x_\ell^{(i)}-\tilde x_\ell^{(i)}|^2\Bigr)^{1/2},
}
where we applied~(\ref{eq:78345534}) for each of the $d$ components.
This entails the multivariate variant of estimate~(\ref{eq:43985638}) that is
\bas{
|g(\bx)-g(\tilde \bx)|\le \|\bx-\tilde \bx\|_{\ell^d_\varrho}.
}
The rest follows in complete analogy to the one dimensional setting.
\end{proof}

\begin{lemma}
\label{le:7357}For every $k\in-\N_{0}$, 
one has that $\varrho_{k-1}\le \sqrt\beta \varrho_{k}$ and
the translation operator
\bas{
T \colon \ell_{\varrho}^d\to\ell_{\varrho}^d, 
\qquad 
  \bx=(x_{k})_{k\in-\N_{0}}\mapsto (\1_{\{k\not=0\}} x_{k+1})_{k\in-\N_{0}}
}
is Lipschitz continuous with Lipschitz constant $\sqrt\beta<1$. In particular, it maps $\ell_{\varrho}$ to $\ell_{\varrho}$.
\end{lemma}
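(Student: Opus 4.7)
The lemma splits into two claims: a pointwise comparison of consecutive weights and a Lipschitz (hence boundedness) bound for the linear operator $T$. I would prove the weight inequality first and then immediately reduce the operator bound to it.

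For the weight inequality, I would simply substitute $k-1$ into the defining formula~\cref{def:varrho}. This yields
\[
\varrho_{k-1}=(1-\alpha)\epsilon^{-1}\Bigl(\alpha\cdot\alpha^{-k}+\tfrac{\sqrt{\beta}}{\sqrt{1-\alpha^{2}/\beta}}\,\beta^{-k/2}\Bigr),
\qquad
\sqrt{\beta}\,\varrho_{k}=(1-\alpha)\epsilon^{-1}\Bigl(\sqrt{\beta}\cdot\alpha^{-k}+\tfrac{\sqrt{\beta}}{\sqrt{1-\alpha^{2}/\beta}}\,\beta^{-k/2}\Bigr).
\]
The second summand is identical on the two sides, and since $\alpha\le\sqrt{\beta}$ by hypothesis the first summand on the left is dominated by its counterpart on the right, giving $\varrho_{k-1}\le\sqrt\beta\,\varrho_k$.

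For the translation operator I would compute its $\ell_\varrho^d$-norm directly. Because the zeroth coordinate of $T\bx$ vanishes by definition, one has
\[
\|T\bx\|_{\ell_\varrho^d}=\sum_{\substack{k\in-\N_0\\ k\le-1}}\varrho_k\,|x_{k+1}|
=\sum_{j\in-\N_0}\varrho_{j-1}\,|x_j|
\]
after the index shift $j=k+1$. Applying the first part of the lemma termwise gives $\varrho_{j-1}\le\sqrt\beta\,\varrho_j$, whence $\|T\bx\|_{\ell_\varrho^d}\le \sqrt{\beta}\,\|\bx\|_{\ell_\varrho^d}$. Since $T$ is linear, this bound simultaneously shows that $T$ maps $\ell_\varrho^d$ into itself and that $\sqrt{\beta}<1$ is a valid Lipschitz constant.

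There is no real obstacle; the only subtlety is the orientation of the index set $-\N_0=\{0,-1,-2,\dots\}$ in the index shift, together with correctly handling the suppressed $k=0$ coordinate. For the one-dimensional statement at the end (``it maps $\ell_\varrho$ to $\ell_\varrho$'') the same calculation specializes with $d=1$.
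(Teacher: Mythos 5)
Your proposal is correct and follows essentially the same route as the paper: substituting $k-1$ into the definition of $\varrho$ and using $\alpha<\sqrt\beta$ for the weight inequality, then an index shift to get $\|T\bx\|_{\ell_\varrho^d}\le\sqrt\beta\,\|\bx\|_{\ell_\varrho^d}$ and linearity of $T$ for the Lipschitz statement. Your version just spells out the index shift and the comparison of the two summands, which the paper treats as immediate.
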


\begin{proof}By definition of $(\varrho_{k})$ it immediately follows that, for every $k\in-\N_{0}$,
\bas{
\varrho_{k-1}\le \sqrt \beta \varrho_{k}.
}
Consequently, one has that, for every $\bx\in\ell_{\varrho}^d$,
\bas{
\|T(\bx)\|_{\ell_{\varrho}^d}=\sum_{k\in-\N_{0}} \varrho_{k-1} |x_{k}|\le \sqrt \beta \sum_{k\in-\N_{0}} \varrho_{k} |x_{k}|=\sqrt \beta \,\|\bx\|_{\ell_{\varrho}^d}.
} 
The statement follows by linearity of $T$.
\end{proof}

%

\section{Analysis of the Adam evolution with fixed $\theta$-parameter}
\label{sec:Adam_evolution_with_fixed_theta}
\red

Let us introduce the setting of this section. Let $(X,U)$ be an innovation and $(\alpha,\beta,\epsilon)$ be damping parameters with $0\le \alpha<\sqrt\beta<1$. The Adam algorithm for the innovation $(X,U)$ is a stochastic process where in its iterative definition in \cref{eq:Adam} 
the momentum and damping terms are updated by using the current $\theta$-value of the process. In this section, we analyse a similar process where we keep the $\theta$-value that is used when updating the momentum and damping term fixed.

In the following, fix $ \theta \in \R^d $ and let $(U_k)_{k\in\Z}$ be an i.i.d.\ sequence of copies of $U$. Moreover, for every $n\in\Z$, let $X_n=X(U_n,\theta)$ and
$\bX(n)=(X_{n+k})_{k\in-\N_{0}}$. We fix $n_0\in\N_0$ and consider the process
$(\bar\Theta_{n})_{n\ge n_{0}}$ given by
\bas{
\bar\Theta_{n}=\sum_{k=n_{0}+1}^{n} \gamma_{k}\, g(\bX(k)).
}
\blue

\begin{lemma}\label{le:842} 
Let $p\in[2,\infty)$. One has for every
 $n\in\N_{0}$ with $n\ge n_{0}$ that
\bas{
\E\bigl[|\bar\Theta_{n}-\E[\bar\Theta_n]|^{p}\bigr]^{2/p} \le \kappa^{2} \sum_{k=n_{0}+1}^{n} \gamma_k^{2} \,\E\bigl[|X(U,\theta)-\E[X(U,\theta)]|^{p}\bigr]^{2/p},
}
where
\bas{\label{eq:72635984}
\kappa=2  C_{p}\bigl((1-\sqrt\beta)^{-2} \varrho_{0} \|\varrho\|_{\ell_{1}}+ \|\varrho\|_{\ell_{1}} ^{2}\bigr),
}
$\varrho$ is as in~(\ref{def:varrho}) and $C_{p}$ is the constant appearing in the the Burkholder-Davis-Gundy inequality for the $p$-th moment.
\end{lemma}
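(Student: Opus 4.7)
I would prove \cref{le:842} via a Doob-type martingale decomposition with respect to the filtration $\cF_j := \sigma(U_i : i \le j)$, $j \in \Z$, combined with the Burkholder--Davis--Gundy inequality and the $1$-Lipschitz property of $g$ on $\ell_\varrho^d$ from \cref{le:23456}. Because the i.i.d.\ nature of $(U_i)$ makes the tail $\sigma$-algebra $\cF_{-\infty}$ trivial, one has the convergent decomposition
\bas{
\bar\Theta_n-\E[\bar\Theta_n]=\sum_{j\le n}M_j,\qquad M_j:=\E[\bar\Theta_n\mid\cF_j]-\E[\bar\Theta_n\mid\cF_{j-1}].
}
To handle each difference, I introduce an independent copy $\tilde U_j$ of $U_j$ and denote by $\tilde{\bX}(k)$ the sequence obtained from $\bX(k)$ by substituting $\tilde U_j$ for $U_j$. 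Since $\bX(k)$ and $\tilde{\bX}(k)$ agree in all coordinates except the one at index $j-k\in-\N_0$, \cref{le:23456} gives $|g(\bX(k))-g(\tilde{\bX}(k))|\le\varrho_{j-k}|X_j-\tilde X_j|$, and the standard coupling identity $\E[g(\tilde{\bX}(k))\mid\cF_j]=\E[g(\bX(k))\mid\cF_{j-1}]$ together with conditional Jensen yields
\bas{
M_j=\sum_{k=\max(n_0+1,j)}^n\gamma_k\,D_j^{(k)},\qquad \E\bigl[|D_j^{(k)}|^p\bigr]^{1/p}\le 2\,\varrho_{j-k}\,\sigma,
}
where $D_j^{(k)}:=\E[g(\bX(k))-g(\tilde{\bX}(k))\mid\cF_j]$ and $\sigma:=\E[|X(U,\theta)-\E[X(U,\theta)]|^p]^{1/p}$.

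Applying BDG (with constant $C_p$) to the martingale $\sum_j M_j$ and Minkowski in $L^p$ to each $M_j$, I obtain
\bas{
\E\bigl[|\bar\Theta_n-\E[\bar\Theta_n]|^p\bigr]^{2/p}\le C_p^2\sum_{j\le n}\E[|M_j|^p]^{2/p}\le 4C_p^2\sigma^2\sum_{j\le n}\biggl(\sum_{k=\max(n_0+1,j)}^n\gamma_k\varrho_{j-k}\biggr)^{\!2},
}
reducing matters to the purely deterministic estimate that the double sum is bounded by $[(1-\sqrt\beta)^{-2}\varrho_0\|\varrho\|_{\ell_1}+\|\varrho\|_{\ell_1}^2]\sum_k\gamma_k^2$, which absorbs into the stated $\kappa^2$. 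I establish this by splitting the outer sum into the ``diagonal'' piece $n_0<j\le n$ (where $k\ge j$) and the ``tail'' piece $j\le n_0$ (where $k\ge n_0+1>j$). For the diagonal, Cauchy--Schwarz with $\varrho_{j-k}$ as weights in the inner sum, followed by Fubini and the identity $\sum_{j\le k}\varrho_{j-k}=\|\varrho\|_{\ell_1}$, produces the contribution $\|\varrho\|_{\ell_1}^2\sum_k\gamma_k^2$. For the tail, the substitution $m:=n_0+1-j\ge 1$ and $l:=k-n_0-1\ge 0$ combined with iteration of the estimate $\varrho_{k-1}\le\sqrt\beta\,\varrho_k$ from \cref{le:7357} yields $\varrho_{-(l+m)}\le\beta^{m/2}\varrho_{-l}$, which lets me pull $\beta^m$ out of the squared inner sum; summing the geometric series $\sum_{m\ge 1}\beta^m=\beta/(1-\beta)\le(1-\sqrt\beta)^{-2}$ and a final Cauchy--Schwarz in $l$ bound the tail contribution by $(1-\sqrt\beta)^{-2}\varrho_0\|\varrho\|_{\ell_1}\sum_k\gamma_k^2$.

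The main obstacle will be the tail case: one must sequence the Cauchy--Schwarz in $l$ with the $\beta^{m/2}$ telescoping in $m$ in the correct order to land on $\sum_k\gamma_k^2$ rather than a weaker pointwise quantity like $\gamma_{n_0+1}^2$, since the monotonicity of $(\gamma_n)$ alone is not strong enough to close the estimate. Everything else (verifying the coupling identity, the conditional-Jensen step, and the assembly via BDG and Minkowski) is a routine chain of inequalities once the Lipschitz structure of $g$ on $\ell_\varrho^d$ has been exploited.
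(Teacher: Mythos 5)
Your proposal is correct and follows essentially the same route as the paper's proof: a Doob martingale decomposition of $\bar\Theta_n-\E[\bar\Theta_n]$ with respect to the filtration of the innovations, a coupling in which one innovation is replaced by an independent copy so that the increment is controlled by the $1$-Lipschitz property of $g$ on $\ell_\varrho^d$ (\cref{le:23456}), and then the Burkholder--Davis--Gundy inequality combined with the triangle inequality in $L^{p/2}$ for the quadratic variation. The only difference is bookkeeping in the deterministic double sum $\sum_{j\le n}\bigl(\sum_k\gamma_k\varrho_{j-k}\bigr)^2$ (you use Cauchy--Schwarz with $\varrho$-weights and Fubini, while the paper uses monotonicity of $(\gamma_n)$ and the geometric decay from \cref{le:7357}, which in fact already suffices for the tail part and yields the even smaller quantity $\gamma_{n_0+1}^2$), and both versions give the stated constant.
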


\begin{proof}For ease of notation we prove the statement for $d=1$. The generalisation to general $d$ is straight-forward. 
Let $\tilde X_{0}$ be identically distributed as $X_{0}$ and independent of the sequence $(X_{n})_{n\in\Z}$ and set 
$\tilde X_{n}=X_{n}$ for $n\in\Z\backslash\{0\}$.
We let
\bas{
\tilde{\bX}(n)=(\tilde X_{n+k})_{k\in-\N_{0}}.
}
Thus $\tilde{\bX}(n)$ agrees with  $\bX(n)$ in all but at most one component.

We let $(\cF_{n})_{n\in\Z}$ be the filtration generated by $(X_{n})$ and consider for fixed $N\ge n_{0}$ the martingale $(M_{n})_{n\in\Z}$ given by
\bas{
M_{n}=\E[\bar\Theta_{N}|\cF_{n}]-\E[\bar\Theta_N]
}
We denote by $([M]_{n})_{n\in\Z}$ its quadratic variation process being defined by
\bas{
[M]_{n}=\sum_{r=-\infty}^{n}(M_{r}-M_{r-1})^{2}
}
and analyse the $q/2$-th moment of its increments $\Delta [M]_{n}:=  (M_{n}-M_{n-1})^{2}$.
Note that for every $n\in\Z\cap (-\infty,N]$, one has that
\bas{
\Delta M_{n}=\E[\bar\Theta_N|\cF_{n}]-\E[\bar\Theta_N|\cF_{n-1}] = \E\Bigl[\sum_{r=n_{0}+1}^{N} \gamma_{r} g(\mathbf X(r))|\cF_{n}\Bigr]-  \E\Bigl[\sum_{r=n_{0}+1}^{N} \gamma_{r}g(\mathbf X(r))|\cF_{n-1}\Bigr].
}
Now using that $(X_{n})$ is an i.i.d.\ sequence we conclude that, for almost all $\omega\in\Omega$,
\bas{
\E\Bigl[\sum_{r=n_{0}+1}^{N} \gamma_{r} g(\mathbf X(r))\Big|\cF_{n}\Bigr](\omega)=\E\Bigl[\sum_{r=n_{0}+1}^{N} \gamma_{r} g(\bX(r-n))\Big| \forall k\in -\N_{0} : X_{k}=X_{k+n}(\omega) \Bigr]
}
and
\bas{
\E\Bigl[\sum_{r=n_{0}+1}^{N} \gamma_{r} g(\mathbf X(r))\Big|\cF_{n-1}\Bigr](\omega) &=
\E\Bigl[\sum_{r=n_{0}+1}^{N} \gamma_{r} g(\bX(r-n))\Big| \forall k\in -\N : X_{k}=X_{k+n}(\omega) \Bigr]\\
&=\E\Bigl[\sum_{r=n_{0}+1}^{N} \gamma_{r} g(\tilde {\bX}(r-n))\Big| \forall k\in -\N_{0} : X_{k}=X_{k+n}(\omega) \Bigr].
}
Consequently,
\bas{
\E\Bigl[&\sum_{r=n_{0}+1}^{N} \gamma_{r} g(\mathbf X(r))\Big|\cF_{n}\Bigr](\omega)-
\E\Bigl[\sum_{r=n_{0}+1}^{N} \gamma_{r}g(\mathbf X(r))\Big|\cF_{n-1}\Bigr](\omega)\\
&=\E\Bigl[\underbrace{\sum_{r=n_{0}+1}^{N} \gamma_{r}(g(\mathbf X(r-n))-g(\tilde {\mathbf X}(r-n))) }_{=:\Upsilon_{n}[N]} \Big| \forall k\in -\N_{0} : X_{k}=X_{k+n}(\omega) \Bigr].
}
By stationarity, the distribution of the conditional expectation 
\bas{\omega\mapsto \E\bigl[\Upsilon_{n}[N]\big|\forall k\in -\N_{0} : X_{k}=X_{k+n}(\omega)\bigr]
}
is the same as the distribution of $\E\bigl[\Upsilon _n[N]\big| X_{0},X_{-1},\dots \bigr]$. 
Using this and Jensen's inequality we get that
\bas{
\E\bigl[\Delta [M]_{n}^{p/2}\bigr]= \E\bigl[|M_{n}-M_{n-1}|^{p}\bigr]=\E\bigl[\bigl |\E\bigl[\Upsilon _n[N]\big|X_{0},X_{-1},\dots\bigr]\bigr|^{p}\bigr] 
\le \E\bigl[|\Upsilon_n[N]|^{p}\bigr].
}
It follows with the triangle inequality that
\bas{\label{eq:723764}
\E\bigl[[M]_{N}^{p/2}\bigr]^{2/p}=\E\Bigl[\Bigl(\sum_{n=-\infty}^{N} \Delta [M]_{n}\Bigr)^{p/2}\Bigr]^{2/p}\le \sum_{n=-\infty}^{N} \E\bigl[(\Delta [M]_{n})^{p/2}\bigr]^{2/p}\le \sum_{n=-\infty}^{N}
\E\bigl[|\Upsilon_n[N]|^{p}\bigr]^{2/p}.
}
Now for $n\in\Z$ with $n\le N$ we get with Lemma~\ref{le:23456} that
\bas{
|\Upsilon_{n}[N]|\le\sum_{r=n\vee (n_{0}+1)}^{N} \gamma_{r} |g(\mathbf X(r-n))-g(\tilde {\mathbf X}(r-n))|\le \sum_{r=n\vee (n_{0}+1)}^{N} \gamma_{r} \varrho_{n-r} |X_{0}-\tilde X_{0}|
}
and again by the triangle inequality we have in terms of $C:=\E[|X_{0}-\E[ X_{0}]|^{p}]^{1/p}$,
\bas{
\E[|\Upsilon_{n}[N]|^{p}]^{1/p} &\le \sum_{r=n\vee (n_{0}+1)}^{N} \gamma_{r} \varrho_{n-r} \,\E[|X_{0}-\tilde X_{0}|^{p}]^{1/p}\\
&\le\begin{cases} 2C \|\varrho\|_{\ell_1} \gamma_{n} ,&\text{ if } n\ge n_{0}+1,\\
2C(1-\sqrt\beta)^{-1}\varrho_{n-(n_{0}+1)}\gamma_{n_{0}+1} ,&\text{ if } n\le n_0.
\end{cases}
}
Together with~(\ref{eq:723764}) we conclude that
\bas{
\E\bigl[[M]_{N}^{p/2}\bigr]^{2/p}&\le\sum_{n=-\infty}^{N}
\E[|\Upsilon_n[N]|^{p}]^{2/p}\\
& \le 4C^{2} (1-\sqrt\beta)^{-2}\gamma_{n_{0}+1}^{2}\sum_{n=-\infty}^{n_{0}} \varrho_{n-(n_{0}+1)}^{2}+4C^{2} \|\varrho\|_{\ell_{1}} ^{2}\sum_{n=n_{0}+1}^{N}
\gamma_{n}^{2}\\
&\le 4C^{2}\Bigl((1-\sqrt\beta)^{-2} \varrho_{0} \|\varrho\|_{\ell_{1}} \gamma_{n_{0}+1}^{2}+ \|\varrho\|_{\ell_{1}} ^{2}\sum_{n=n_{0}+1}^{N}
\gamma_{n}^{2}\Bigr).
}
The statement follows with the Burkholder-Davis-Gundy inequality since $\bar\Theta_{N}=\E[\bar\Theta_{N}]+M_{N}$.
\end{proof}

Next, we derive an extension of the previous lemma that considers whole trajectories instead of single time instances.

\begin{lemma} \label{le:1345}   
Let $p\in(2,\infty)$. One has for every  $n\in\N_{0}$ with $n\ge n_{0}$
\bas{
\E\Bigl[\max_{k=n_0,\dots,n} |\bar\Theta_k-\E[\bar\Theta_k]|^{p}\Bigr]^{1/p}\le  (1-2^{-(\frac12-\frac 1p)})^{-1} \kappa \gamma_{n_{0}+1}  \sqrt {n-n_{0}}\, \E\bigl[|X_{0}-\E[X_{0}]|^{p}\bigr]^{1/p} ,
}
where $\kappa$ is as in~(\ref{eq:72635984}).
\end{lemma}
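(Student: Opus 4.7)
The plan is to deduce the maximal estimate from \cref{le:842} via a dyadic chaining argument along the binary expansion of $k-n_{0}$.

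First, I would record an \emph{increment} form of \cref{le:842}: for any $n_{0}\le a<b$, the quantity $\bar\Theta_{b}-\bar\Theta_{a}=\sum_{r=a+1}^{b}\gamma_{r}\,g(\bX(r))$ is of exactly the shape to which \cref{le:842} applies, with the role of the starting time played by $a$ in place of $n_{0}$. Invoking the lemma with $a$ replacing $n_{0}$ and using that $(\gamma_{n})$ is decreasing yields
\[
\bigl\|\bar\Theta_{b}-\bar\Theta_{a}-\E[\bar\Theta_{b}-\bar\Theta_{a}]\bigr\|_{p}
\le \kappa\sqrt{\sum_{k=a+1}^{b}\gamma_{k}^{2}}\,C
\le \kappa\,\gamma_{n_{0}+1}\sqrt{b-a}\,C,
\]
where $C:=\E[|X_{0}-\E[X_{0}]|^{p}]^{1/p}$.

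Next, I fix $J:=\lceil\log_{2}(n-n_{0}+1)\rceil$ (the case $n=n_{0}$ being trivial), which satisfies $2^{J}\le 2(n-n_{0})$. For each $k\in\{n_{0},\dots,n\}$ I write the binary expansion $k-n_{0}=\sum_{j=1}^{J}b_{j}(k)\,2^{J-j}$ and define the chain $k_{j}(k):=n_{0}+\sum_{i=1}^{j}b_{i}(k)\,2^{J-i}$, so that $k_{0}(k)=n_{0}$, $k_{J}(k)=k$, and $k_{j}(k)-k_{j-1}(k)\in\{0,2^{J-j}\}$. Telescoping along this chain, taking absolute values, maximising over $k$, and using the triangle inequality in $L^{p}$ reduces the problem to bounding the $J$ terms $\bigl\|\max_{k}|Z_{j}(k)|\bigr\|_{p}$, where $Z_{j}(k)$ is the centred version of $\bar\Theta_{k_{j}(k)}-\bar\Theta_{k_{j-1}(k)}$.

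The crucial combinatorial observation is that $Z_{j}(k)=0$ whenever $b_{j}(k)=0$; on the event $\{b_{j}(k)=1\}$ the pair $(k_{j-1}(k),k_{j}(k))$ is fully determined by the prefix $(b_{1}(k),\dots,b_{j-1}(k))\in\{0,1\}^{j-1}$ and hence ranges over at most $2^{j-1}$ distinct values. Combining the trivial estimate $\bigl\|\max_{i\in I}|Y_{i}|\bigr\|_{p}\le |I|^{1/p}\max_{i}\|Y_{i}\|_{p}$ with the increment bound from the first step (applied to intervals of length $2^{J-j}$) gives
\[
\bigl\|\max_{k}|Z_{j}(k)|\bigr\|_{p}
\le (2^{j-1})^{1/p}\cdot\kappa\,\gamma_{n_{0}+1}\,2^{(J-j)/2}\,C
= 2^{-1/p}\kappa\,\gamma_{n_{0}+1}\,C\cdot 2^{J/2}\cdot 2^{-j(\frac12-\frac1p)}.
\]
Summing the convergent geometric series $\sum_{j=1}^{\infty}2^{-j(\frac12-\frac1p)}=2^{-(\frac12-\frac1p)}/(1-2^{-(\frac12-\frac1p)})$ (finite thanks to $p>2$) and using $2^{J/2}/\sqrt{2}\le\sqrt{n-n_{0}}$ yields precisely the constant $(1-2^{-(\frac12-\frac1p)})^{-1}$ stated in the lemma.

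The main delicate point is hitting this precise constant rather than a looser multiple of it. Two tight estimates are essential: counting $2^{j-1}$ (rather than $2^{j}$) non-trivial pairs at level $j$ saves a factor of $2^{1/p}$ in the union bound, and the minimal choice of $J$ with $2^{J}\le 2(n-n_{0})$ saves the remaining factor of $\sqrt{2}$; together they reconcile the chaining output exactly with the closed-form geometric-series constant. A coarser chaining (\eg{} simply bounding the number of level-$j$ pairs by $2^{j}$ and rounding $n-n_{0}$ up to a generous power of two) gives the same rate but with an inferior prefactor.
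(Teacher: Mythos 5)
Your proof is correct and follows essentially the same route as the paper's: the paper's decomposition of $\{0,\dots,N\}$ into $m$ dyadic ``layers'' of intervals of length $2^{m-k}$ with at most $2^{k-1}$ intervals per layer is exactly your binary-expansion chain, and both arguments then apply \cref{le:842} to each dyadic increment, use the $L^p$ union bound $\E[\max_i|Y_i|^p]\le\sum_i\E[|Y_i|^p]$, and sum the resulting geometric series to obtain the constant $(1-2^{-(\frac12-\frac1p)})^{-1}$ together with $\sqrt{n-n_0}$. The only differences are cosmetic (the paper shifts to $n_0=0$ and phrases the chain via $n_k=2^{m-k}\lfloor 2^{-(m-k)}n\rfloor$ rather than binary digits).
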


\begin{proof}
Without loss of generality we can assume that $n_{0}=0$ since the general statement then follows by an index shift.

Suppose that $m\in\N$ and $2^{m-1}\le N\le 2^{m}-1$.
We call for every $k=1,\dots,m$, all intervals of the form
\bas{
[j,j+2^{m-k})\qquad (j\in (2^{m-k+1}\N_{0})\cap \{0,\dots,N\})
}
and additionally the empty set an interval of the $k$-th layer. Note that by construction for every natural number  $n\in\{0,\dots,N\}$ the interval $[0,n)$ can be uniquely written as pairwise disjoint union
\bas{
[0,n)=[0,n_{1})\cup [n_{1},n_2)\cup \ldots\cup [n_{m-1},n_m)
}
with each  $[n_{k-1},n_{k})$ being an interval of the $k$-th layer. Indeed, the representation is obtained for $n_{k}=2^{m-k}\lfloor 2^{-(m-k)} n\rfloor$.
We let $\bar \Theta_{k,\ell}:=\bar\Theta_{\ell}-\bar\Theta_{k}-\E[\bar \Theta_{\ell}-\bar \Theta_{k}]$ for every $k,\ell\in\N_{0}$ with $0\le k\le \ell$. Then for~$n$ as above we have
\bas{
 \bar \Theta_{0,n}= \sum_{k=1}^{m}  \bar \Theta_{n_{k-1},n_{k}}.
}
Consequently,
\bas{
\max_{n\in\{0,\dots ,N\}} |\bar\Theta_{0,n}|\le \sum_{k=1}^{m} \max_{[n_{k-1},n_{k}):\text{ interval of $k$th layer}} |\bar\Theta_{n_{k-1},n_{k}}|
}
and
\begin{align}\label{eq8362}
\E\Bigl[\max_{n\in\{0,\dots ,N\}} |\bar\Theta_{0,n}|^{p}\Bigr]^{1/p} \le \sum_{k=1}^{m} \E\Bigl[\max_{[n_{k-1},n_{k}):\text{ interval of $k$th layer}} |\bar\Theta_{n_{k-1},n_{k}}|^{p}\Bigr]^{1/p}.
\end{align}
Now note that by Lemma~\ref{le:842} one has in terms of $C:=\E[|X_{0}-\E[ X_{0}]|^{p}]^{1/p}$ that for an arbitrary non-empty interval $[n_{k-1},n_k)$ of the $k$-th layer
\bas{
\E[|\bar\Theta_{n_{k-1}:n_k}|^p]^{2/p}\le \kappa^2  \gamma_1^2 2^{m-k} C^2
}
so that
\bas{
\E\Bigl[\max_{[n_{k-1},n_{k}):\text{ interval of $k$th layer}} |\bar\Theta_{n_{k-1},n_{k}}|^{p}\Bigr]&\le \sum_{[n_{k-1},n_{k}):\text{ interval of $k$th layer}} \E[ |\bar\Theta_{n_{k-1},n_k } |^p]\\
&\le  \kappa^{p} \gamma_{1}^{p}  C^{p} 2^{k-1} (2^{m-k})^{p/2}.
}
Together with~(\ref{eq8362}) we obtain that
\bas{
\E\Bigl[\max_{n\in\{0,\dots ,N\}} |\bar\Theta_{0,n}|^{p}\Bigr]^{1/p} \le \kappa \gamma_{1}  C \sqrt{2^{m-1}}  \sum_{k=1}^{m} 2^{-(\frac12-\frac 1p)(k-1)}\le (1-2^{-(\frac12-\frac 1p)})^{-1} \kappa \gamma_{1}  C \sqrt N.
}
%
\end{proof}
\black

\section{Introducing approximations of the Adam optimization processes}
\label{sec:approx_intro}

In the proof of Theorem~\ref{thm-2}, we will work with approximations to the  Adam algorithm that are sufficiently  close but easier to analyse. 
For this we fix damping parameters $ \alpha, \beta, \epsilon \in [0,\infty) $ 
with $ \alpha < \sqrt\beta < 1 $ and an innovation $(X,U)$ and denote by 
$
  ( \theta_n )_{ n \ge n_0 } 
$ 
an Adam algorithm started at time $ n_0 \in \N_0 $ 
in a state $ ( \theta_{ n_0 }, m_{ n_0 }, v_{ n_0 } ) $. 
We assume that there exists a deterministic 
$
  \mathbf x=(x_k)_{ k \in - \N_0 }\in\ell_{ \varrho }^d
$ 
such that for all $ i \in \{ 1, \dots, d \} $ we have 
\bas{
m_{n_{0}}=(1-\alpha) \sum_{k\in-\N_{0}} \alpha^{-k} x_{k} 
\qquad\text{and} 
\qquad 
 v^{(i)}_{n_{0}}=(1-\beta) \sum_{k\in-\N_{0}} \beta^{-k} ( x^{(i)}_k )^2
   .
}
We consider the $\R^{d}$-valued process $ ( X_k )_{ k \in \Z } $ given by
\bas{
X_{k}=\begin{cases} x_{k-n_{0}}, & \text{ if } k\le n_{0},\\
X(U_{k},\theta_{k-1}), &\text{ if } k> n_{0} \end{cases}
}
and we set for every $n\ge n_{0}$, $\bX(n)=(X_{n+k})_{k\in-\N_{0}}$.


The terms $m_n$ and $v_n$ (defined as in Def.~\ref{def:Adam}) may be directly represented as weighted series over the past innovations  and we get that for every $n\in\N$ and $i=1,\dots,d$ one has that
\bas{
\sigmai_{n}m_{n}^{(i)}= \frac{(1-\alpha) \sum_{k=-\infty}^{0} \alpha^{-k} X^{(i)}_{n+k}}{
  \epsilon + \sqrt {(1-\beta)/(1-\beta^{n})\, \sum_{k=-\infty}^{0} \beta^{-k}(X^{(i)}_{n+k})^{2} } }.
}
Note that up to an additional factor $(1-\beta^{n})$ the latter expression agrees with 
$g(\bX^{(i)}(n))$ (cf.\ \cref{eq872346}). 
This means that for large $n$ the increments of the process $(\theta_{n})$ are similar to the ones of the process $(\Theta_{n})_{n\ge n_{0}}$ given by
\bas{\label{eq:3458741}
\Theta_{n}:=\sum_{k=n_{0}+1}^{n} \gamma_{k} \,g(\bX(k)).
}

For technical reasons we will consider dynamical systems with 
simpler dynamics. For this denote by  $(n_{\ell})_{\ell\in\N_{0}}$ an increasing  $\N_{0}$-valued sequence tending to infinity.  Here deliberately the initial value $n_0$ of the series coincides with the initial time of the Adam scheme.
We will use the times in the sequence $(n_{\ell})$ as update-times for the $\theta$-parameter similar to an Euler approximation. More explicitly, we define 
for every $\ell\in\N$ and 
$n\in\{n_{\ell-1}+1,\dots,n_{\ell}\}$, 
the process $\tilde \bX(n)=(\tilde \bX(n)_{k})_{k\in-\N_{0}}$ by
\bas{
\tilde \bX(n)_{k}=\begin{cases}
X(U_{n+k}, \theta_{n_{\ell-1}}), & \text{ \ if } n+k>n_{\ell-1}, \\
X_{n+k}, &\text{ \ else.}
\end{cases}
}
Comparing $\bX(n)$ and $\tilde\bX(n)$ we see that the terms outside the $(n_{\ell-1},n_\ell]$-window agree and inside the window the $\theta$-parameter is fixed as $\theta_{n_{\ell-1}}$. 
In analogy to before  we consider the process $(\tilde\Theta_{n})_{n\ge n_{0}}$ 
given by 
\bas{
\label{eq:approximation_process_1}
\tilde \Theta_{n}:=\sum_{k=n_{0}+1}^{n} \gamma_k\, g(\tilde \bX(k)).
}

Next, let $(U_{\ell,n})_{\ell\in\N,n\in\Z}$ be a family of independent copies of $U$ that is also independent of $(U_{n})_{n\in\N}$. For every $\ell\in\N$ and $n\in \{n_{\ell-1}+1,\dots,n_{\ell}\}$, we consider $\dtilde \bX(n)=(\dtilde \bX(n)_{k})_{k\in-\N_{0}}$ given by
\bas{
\dtilde \bX(n)_{k}=\begin{cases}
X(U_{n+k}, \theta_{n_{\ell-1}})=\tilde \bX(n)_{k}, & \text{ \ if } n+k>n_{\ell-1}, \\
X(U_{\ell,n+k}, \theta_{n_{\ell-1}}), &\text{ \ else.}
\end{cases}
}
In analogy to before we define a process $(\dtilde \Theta_{n})_{n\ge n_{0}}$ via
\bas{
\label{eq:approximation_process_2}
\dtilde \Theta_{n}=\sum_{k=n_{0}+1}^{n} \gamma_{k}\, g(\dtilde \bX (k)).
}
We see that in $\dtilde \bX(n)$ and $\tilde \bX(n)$ the terms inside the $(n_{\ell-1},n_\ell]$-window agree. Moreover, the terms are chosen in such a way that conditionally  on $\theta_{n_{\ell-1}}$, $\dtilde \bX(n)$ is a sequence of i.i.d.\ random variables. Roughly speaking, this entail that the related approximation is in ``stationary equilibrium''.

As last approximation we denote by $(\bar \Theta_{n})_{n\ge n_{0}}$ the unique process satisfying for every $\ell\in\N$ and $n=n_{\ell-1}+1,\dots,n_{\ell}$
\bas{
\Delta\bar\Theta_{n}:=\bar\Theta_{n}-\bar\Theta_{n-1}= \gamma_{n} f(\theta_{n_{\ell-1}})
}
and $\bar\Theta_{n_{0}}=0$.

%
%
%

In the analysis, we will impose the following regularity assumption on the innovation.

\begin{defi}\label{def:V2}
Let $C,\tilde C,\tilde L\in[0,\infty)$, $p\in[2,\infty)$ and $V\subset\R^{d}$ a measurable set. An innovation $(X,U)$ is  called \emph{$p$-regular  with parameter $(C,\tilde C,\tilde L)$ on $V$}, if for every $\theta,\theta'\in V$, one has
\bas{ \E[|X(U,\theta)|^{2}]^{1/2}\le C,  \ & \E[|X(U,\theta)-\E[X(U,\theta)]|^{p}]^{1/p}\le \tilde C \text{ \ and}\\ 
&\qquad \E[|X(U,\theta)-X(U,\theta')|^{p}]^{1/p} \le \tilde L  \,|\theta-\theta'|. 
}
\end{defi}

We give a quantitative statement that allows us to control the error between the Adam algorithm and its approximations.

\begin{prop}\label{prop:234}
Let
\begin{enumerate}[label=(\roman*)]
\item $\alpha,\beta\in[0,1)$ with $\alpha<\sqrt \beta$ and $\epsilon\in(0,\infty)$ (the \emph{damping parameters}),
\item $n_0\in\N_{0}$, $\theta_{n_{0}},m_{n_{0}}\in\R^{d}$, $v_{n_{0}}\in[0,\infty)^{d}$ (the initialisation), 
\item a decreasing $(0,\infty)$-valued sequence $(\gamma_{n})_{n\in\N}$(\emph{sequence of step-sizes}),
\item $(X,U)$ an innovation,
\item $C,\tilde C, \tilde L\in[0,\infty)$, $p\in[2,\infty)$ and $V\subset\R^{d}$ be a measurable set,
\item $(n_\ell)_{\ell\in\N_0}$ be a strictly increasing $\N_0$-valued sequence.
\end{enumerate}

Let $(X,U)$ be a $p$-regular innovation with parameter $(C,\tilde C,\tilde L)$ on $V$
and $(\theta_{n})_{n\in\N_0\cap[n_{0},\infty)}$ the Adam algorithm with damping parameter $(\alpha,\beta,\epsilon)$ and step-sizes $(\gamma_{n})$ started at time $n_{0}$ in $(\theta_{n_0},m_{n_{0}},v_{n_{0}})$.
Moreover, let
\bas{
\mathfrak N=\inf \{n\ge n_{0}:  \theta(n)\not \in V\}
}
and let $(t_n)_{n\in\N}$ as in~(\ref{def:tn}).
One has the following for the approximations $(\Theta_n)_{n\ge n_0}$, $(\tilde\Theta_n)_{n\ge n_0}$ and $(\dtilde\Theta_n)_{n\ge n_0}$:
\begin{enumerate}[label=(\Roman*)]
\item for every $\mathfrak n\in\{n_{0},n_{0}+1,\dots\}$,
\begin{align}\begin{split}
\E\Bigl[ \Bigl(\sum_{k=\mathfrak n+1}^{\infty} \1_{\{\mathfrak N\ge k\}} |\Delta \theta_{k}-\Delta \Theta_{k} |\Bigr)^{p}\Bigr]^{1/p}&\le \bigl(\kappa_{1} \beta^{\frac12(\mathfrak n+1)-n_{0}}\|\bx\|_{\ell^{d}_\varrho}
+\kappa_{2} C  \bigr) \gamma_{\mathfrak n+1} \beta^{\mathfrak n+1}
\end{split}\end{align}
\item for every $\ell\in\N$ and $\mathfrak n,n\in\N_{0}$ with $n_{\ell-1}\le \mathfrak n\le n\le n_{\ell}$,
\begin{align}\begin{split}
\E\Bigl[ \Bigl(\sum_{k=\mathfrak n+1}^n \1_{\{\mathfrak N\ge k\}} |\Delta \Theta_{k} -\Delta \tilde\Theta_{k}|\Bigr)^{p}\Bigr]^{1/p}&\le \kappa_{3}\tilde L  (t_{n_{\ell}}-t_{n_{\ell-1}})(t_{n}-t_{\mathfrak n})
\end{split}\end{align}
\item for every $\ell\in\N$,
\begin{align}
\label{eq78246-2}
\E\Bigl[&\1_{\{\mathfrak N>n_{\ell-1}\}}\Bigl(\sum _{k=n_{\ell-1}+1}^{n_{\ell}} \bigl|\Delta {\tilde \Theta}_{k}-\Delta \dtilde\Theta_{k}\bigr|\Bigr)^{p}\Bigr]^{1/p}\le \gamma_{n_{\ell-1}+1} \bigl(\kappa_{4} C+\kappa_{5}\beta^{(n_{\ell-1}-n_{0})/2}\|\bx\|_{\ell_{\varrho}^{d}}),
\end{align}
\item[(IV.a)]for every $\ell\in\N$,
\bas{
\E[\1_{\{\mathfrak N>n_{\ell-1}\}}|\dtilde\Theta_{n_{\ell}}-\dtilde \Theta_{n_{\ell-1}}-(t_{n_{\ell}}-t_{n_{\ell-1}})f(\theta_{n_{\ell-1}})|^{p}|\cF_{n_{\ell-1}}]^{1/p} \le \kappa_{6} \tilde C\Bigl(\sum_{k=n_{\ell-1}+1}^{n_{\ell}} \gamma_k^{2}\Bigr)^{1/2},
}
\item[(IV.b)]if $p>2$, then for every $\ell\in\N$,
\begin{align}
\begin{split}\label{eq78246-3}
\E[\1_{\{\mathfrak N>n_{\ell-1}\}}\max_{n=n_{\ell-1},\dots,n_{\ell}} |\dtilde\Theta_{n}&-\dtilde \Theta_{n_{\ell-1}}-(t_{n}-t_{n_{\ell-1}})f(\theta_{n_{\ell-1}})|^{p}|\cF_{n_{\ell-1}}]^{1/p}\\
& \le (1-2^{-(\frac12-\frac 1p)})^{-1}  \kappa_{6} \tilde C \gamma_{n_{\ell-1}+1} \sqrt {n_{\ell}-n_{\ell-1}},
\end{split}\end{align}
\end{enumerate}
where 
\bas{ \label{eq237643}
&\kappa_{1}=\frac2{1-\beta^{3/2}}, \ \kappa_{2}=\frac2{1-\beta^{3/2}}\bigl(\frac{ \varrho_{0}}{1-\beta} +\|\varrho\|_{\ell_{1}}\bigr), \kappa_{3}=\frac{1-\alpha}{\sqrt{1-\beta}\sqrt{1-\alpha^{2}/\beta}} d \|\varrho\|_{\ell_{1}},\\ 
&\kappa_{4}=2(1-\sqrt\beta)^{-1}\|\varrho\|_{\ell_{1}}, \  
\kappa_{5}=(1-\sqrt\beta)^{-1}, \ \kappa_{6}=2  C_{p}\bigl((1-\sqrt\beta)^{-1} \sqrt{\varrho_{0} \|\varrho\|_{\ell_{1}}}+ \|\varrho\|_{\ell_{1}} \bigr),}
$(\varrho_{k})$ is as in~(\ref{def:varrho}) and $C_{p}$ is the constant in the Burholder-Davis-Gundy inequality when applied for the $p$th moment.
 \end{prop}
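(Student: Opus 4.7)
\textbf{Proof plan for Proposition~\ref{prop:234}.}
The five assertions are essentially independent and each reduces, via Lemma~\ref{le:23456} (unit Lipschitz constant and uniform boundedness of $g$ on $\ell_\varrho^d$) and Lemma~\ref{le:7357} (geometric decay of $\varrho_k$), to a direct estimate or to an application of the martingale bounds of Lemmas~\ref{le:842} and~\ref{le:1345}.  I would treat them in the displayed order.

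\emph{Part (I).} The increment $\Delta\theta_k^{(i)} = \gamma_k \sigma_k^{(i)} m_k^{(i)}$ and $\Delta\Theta_k^{(i)} = \gamma_k g^{(i)}(\bX(k))$ share identical numerators; they differ only through the bias-correction factor $1/(1-\beta^k)$ under the square root in $\sigma_k^{(i)}$.  Write $a=\sqrt{(1-\beta)\sum \beta^{-j}(X^{(i)}_{k+j})^2}$ and observe that
\[
\Bigl|\tfrac{1}{\epsilon+a/\sqrt{1-\beta^k}}-\tfrac{1}{\epsilon+a}\Bigr|
\le \tfrac{a}{(\epsilon+a)^2}\bigl(\tfrac{1}{\sqrt{1-\beta^k}}-1\bigr)
\le \tfrac{\beta^k}{1-\beta}\cdot \tfrac{a}{(\epsilon+a)^2},
\]
using $1-\sqrt{1-x}\le x$.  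Multiplying by the numerator and invoking the envelope estimate~(\ref{mv:est}) yields pointwise $|\sigma_k^{(i)} m_k^{(i)}-g^{(i)}(\bX(k))|\lesssim \beta^k\cdot|\bX(k)|_\varrho$ in a suitable sense.  Splitting the $\ell_\varrho^d$-norm of $\bX(k)$ into the deterministic tail governed by $\bx$ (contributing $\beta^{(k-n_0)/2}\|\bx\|_{\ell_\varrho^d}$ via Lemma~\ref{le:7357}) and the stochastic part bounded in $L^p$ by $C\|\varrho\|_{\ell_1}$, and summing over $k\ge \mathfrak n+1$ using monotonicity of $(\gamma_k)$ and the geometric series $\sum_{k\ge \mathfrak n+1}\beta^k=\beta^{\mathfrak n+1}/(1-\beta)$, produces the two-term bound with constants $\kappa_1,\kappa_2$ as stated.

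\emph{Part (II).} By construction $\bX(k)$ and $\tilde\bX(k)$ differ only in entries with index $n_{\ell-1}<k+j\le k$, where they equal $X(U_{k+j},\theta_{k+j-1})$ and $X(U_{k+j},\theta_{n_{\ell-1}})$ respectively.  By the $1$-Lipschitz property of $g$,
\[
\|g(\bX(k))-g(\tilde\bX(k))\|_{L^p}\le \sum_{j:\,n_{\ell-1}<k+j\le k}\varrho_j\,\tilde L\,\|\theta_{k+j-1}-\theta_{n_{\ell-1}}\|_{L^p}.
\]
On $\{\mathfrak N\ge k\}$ the Adam increments are bounded deterministically by a constant times $\gamma_i$ (using the uniform bound for $g$), hence $|\theta_{k+j-1}-\theta_{n_{\ell-1}}|\lesssim t_{n_\ell}-t_{n_{\ell-1}}$.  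Summing $\gamma_k$ over $k\in(\mathfrak n,n]$ and collecting $\|\varrho\|_{\ell_1}$ and the multivariate factor $d$ yields the product $(t_{n_\ell}-t_{n_{\ell-1}})(t_n-t_{\mathfrak n})$ with constant $\kappa_3$.

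\emph{Part (III).} Now $\tilde\bX(k)$ and $\dtilde\bX(k)$ agree on indices with $n+k>n_{\ell-1}$ and differ otherwise, where they take the values $X_{k+j}$ and $X(U_{\ell,k+j},\theta_{n_{\ell-1}})$ respectively.  The $1$-Lipschitz property of $g$ and the translation bound of Lemma~\ref{le:7357} reduce the sum $\sum_{k=n_{\ell-1}+1}^{n_\ell}\gamma_k|g(\tilde\bX(k))-g(\dtilde\bX(k))|$ to a geometric series in $\sqrt\beta$ (hence the factor $(1-\sqrt\beta)^{-1}$), whose $L^p$ norm decomposes into a stochastic part of size $2C$ and an initial-data part decaying like $\beta^{(n_{\ell-1}-n_0)/2}\|\bx\|_{\ell_\varrho^d}$; after multiplying by $\gamma_{n_{\ell-1}+1}$ (which upper bounds all $\gamma_k$ in the window) one obtains~(\ref{eq78246-2}).

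\emph{Parts (IV.a) and (IV.b).} This is where Lemmas~\ref{le:842} and~\ref{le:1345} do the real work.  Conditionally on $\cF_{n_{\ell-1}}$, the random variable $\theta_{n_{\ell-1}}$ is frozen and $(\dtilde\bX(k))_{k>n_{\ell-1}}$ is built from a stationary i.i.d.\ sequence of copies of $U$; consequently $\E[g(\dtilde\bX(k))\mid\cF_{n_{\ell-1}}]=f(\theta_{n_{\ell-1}})$ for every such $k$.  Thus on the event $\{\mathfrak N>n_{\ell-1}\}$ the conditional process
\[
\dtilde\Theta_n-\dtilde\Theta_{n_{\ell-1}}-(t_n-t_{n_{\ell-1}})f(\theta_{n_{\ell-1}})=\sum_{k=n_{\ell-1}+1}^n \gamma_k\bigl(g(\dtilde\bX(k))-\E[g(\dtilde\bX(k))\mid\cF_{n_{\ell-1}}]\bigr)
\]
fits exactly the setting of Section~\ref{sec:Adam_evolution_with_fixed_theta} with $\theta\leftarrow\theta_{n_{\ell-1}}$ and initial time $n_{\ell-1}$; applying Lemma~\ref{le:842} conditionally gives~(IV.a), and Lemma~\ref{le:1345} conditionally gives~(IV.b), the constant $\kappa_6$ being a mild rearrangement of $\kappa$ from~(\ref{eq:72635984}).

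\emph{Expected difficulty.} The only genuinely delicate piece is Part (I), where the nonlinear bias-correction inside the square root must be linearized without losing the correct decay rate in $\beta^k$ and without introducing spurious $1/\epsilon^2$ factors; once this is done, all remaining parts are a disciplined assembly of Lemma~\ref{le:23456}, Lemma~\ref{le:7357}, the $p$-regularity assumption, and the conditional application of the Burkholder--Davis--Gundy-based Lemmas~\ref{le:842} and~\ref{le:1345}.
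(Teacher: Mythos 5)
Your plan reproduces the paper's proof essentially step for step: in (I) you linearize the bias-correction factor $1/\sqrt{1-\beta^k}$ and split $\|\bX(k)\|_{\ell_\varrho^d}$ into the geometrically decaying initial-data part and the stochastic part summed against $(\varrho_r)$; in (II)--(III) you combine the uniform bound and $1$-Lipschitz property of $g$ (Lemma~\ref{le:23456}, Lemma~\ref{le:7357}) with the $p$-regularity of the innovation exactly as the paper does; and in (IV.a)/(IV.b) you apply Lemmas~\ref{le:842} and~\ref{le:1345} conditionally on $\cF_{n_{\ell-1}}$ after noting that the conditional mean of $g(\dtilde\bX(k))$ equals $f(\theta_{n_{\ell-1}})$, which is precisely the paper's argument. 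The only slight deviation is cosmetic: in (I) your bound $\tfrac1{\sqrt{1-\beta^k}}-1\le\tfrac{\beta^k}{1-\beta}$ together with the $(\epsilon+a)^{-2}$ factor is a bit looser than the paper's factorization $|\sigma_k^{(i)}m_k^{(i)}-g^{(i)}(\bX(k))|\le\bigl(1-\sqrt{1-\beta^k}\bigr)\,|g^{(i)}(\bX(k))|\le 2\beta^k\|\bX^{(i)}(k)\|_{\ell_\varrho}$ (divide by the larger, bias-corrected denominator), and you need the latter to land exactly on the stated constants $\kappa_1,\kappa_2$ when $\beta$ is close to $1$.
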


\begin{proof}
1) We prove the first inequality.
Note that
\begin{align}\begin{split}\label{eq:235631}
\bigl|\sigma^{(i)}_k m_{k}^{(i)}-g(\mathbf X^{(i)}(k))\bigr|&\le \Bigl(1-\frac1{\sqrt{1/(1-\beta^{k})}}\Bigr) |g(\mathbf X^{(i)}(k))|\le 2\beta^{k} \|\bX^{(i)}(k)\|_{\ell_\varrho}\\
&\le 2 \beta^{k}\Bigl({\sqrt\beta}^{k-n_{0}}\|\bx^{(i)}\|_{\ell_\varrho}+\sum_{r=n_{0}+1}^{k} \varrho_{r-k} |X^{(i)}_r|\Bigr).
\end{split}\end{align}
This entails that
\bas{
\E[\1_{\{\mathfrak N\ge k\}} |\Delta\theta_{k}-\Delta\Theta_{k}|^{p}]^{1/p}\le 2 \beta^{k}\Bigl({\sqrt\beta}^{k-n_{0}}\|\bx\|_{\ell^{d}_\varrho}+C\sum_{r=n_{0}+1}^{k} \varrho_{r-k}\Bigr).
}
Consequently,
\bas{
\E\Bigl[\Bigl(\sum_{k=\mathfrak n+1}^{\infty} \1_{\{\mathfrak N\ge k\}}\gamma_{k}& |\Delta \theta _k -\Delta \Theta_{k}|\Bigr)^{p}\Bigr]^{1/p}
\le 2\sum_{k=\mathfrak n+1}^{\infty} \gamma_{k}  \beta^{k}\Bigl({\sqrt\beta}^{k-n_{0}}\|\bx\|_{\ell^{d}_\varrho}+C\sum_{r=n_{0}+1}^{k} \varrho_{r-k} \Bigr)\\
&\le 2(1-\beta^{3/2})^{-1}\gamma_{\mathfrak n+1} \beta^{\frac32(\mathfrak n+1)-n_{0}}\|\bx\|_{\ell^{d}_\varrho}+2C\gamma_{\mathfrak n+1}\sum_{r=n_{0}+1}^{\infty} \sum_{k=r\vee (\mathfrak n+1)}^{n}\beta^{k} \varrho_{r-k} 
}
In the case where $r\le \mathfrak n$, one has that
\bas{
\sum_{k=r\vee (\mathfrak n+1)}^{n}\beta^{k} \varrho_{r-k} \le (1-\beta^{3/2})^{-1}\varrho_{r-(\mathfrak n+1)} \beta^{\mathfrak n+1}
}
and in the case where $r>\mathfrak n$,
\bas{
\sum_{k=r\vee (\mathfrak n+1)}^{n}\beta^{k} \varrho_{r-k} \le (1-\beta^{3/2})^{-1}\varrho_{0}  \beta^{r}.
}
Hence,
\bas{
\E\Bigl[&\Bigl(\sum_{k=\mathfrak n+1}^{\infty} \1_{\{\mathfrak N\ge k\}}\gamma_{k} |\Delta \theta _k -\Delta \Theta_{k}|\Bigr)^{p}\Bigr]^{1/p}\\
&\le  2(1-\beta^{3/2})^{-1}\gamma_{\mathfrak n+1} 
\Bigl( \beta^{\frac32(\mathfrak n+1)-n_{0}}\|\bx\|_{\ell^{d}_\varrho}+\beta^{\mathfrak n+1}C\sum_{r=n_{0}+1}^{\mathfrak n} \varrho_{r-(\mathfrak n+1)} 
 +C \sum_{r=\mathfrak n+1}^{n}\varrho_{0} \beta^{r} \Bigr)\\
 &\le 2(1-\beta^{3/2})^{-1}\bigl( \beta^{\frac12(\mathfrak n+1)-n_{0}}\|\bx\|_{\ell^{d}_\varrho}
+ (\|\varrho\|_{\ell_{1}}+(1-\beta)^{-1} \varrho_{0} ) C  \bigr) \gamma_{\mathfrak n+1} \beta^{\mathfrak n+1}.
}

2.) Fix $\ell\in\N$. Since for every $k\ge n_{0}$
\bas{
\sigma^{(i)}_{k} m^{(i)}_{k} \le \frac{1-\alpha}{\sqrt{1-\beta}}\frac1{\sqrt{1-\alpha^{2}/\beta}},
}
one has  for every $n=n_{\ell-1}+1,\dots,n_{\ell}$
\bas{
|\theta_{n}^{(i)}-\theta_{n_{\ell-1}}^{(i)}|=\Bigl |\sum_{k=n_{\ell-1}+1}^{n}\gamma_{k} \,\sigma^{(i)}_{k} m^{(i)}_{k} \Bigr|\le \frac{1-\alpha}{\sqrt{1-\beta}}\frac1{\sqrt{1-\alpha^{2}/\beta}}(t_{n}-t_{n_{\ell-1}}).
}
This implies that
\bas{
\E[\1_{\{\mathfrak N\ge n\}}|X(U_{n},\theta_{n-1})-X(U_{n},\theta_{n_{\ell-1}})|^{p}|\cF_{n-1}]^{1/p} \le \underbrace{\frac{1-\alpha}{\sqrt{1-\beta}}\frac1{\sqrt{1-\alpha^{2}/\beta}} d}_{=\kappa_{3}/\|\varrho\|_{\ell_{1}}} \tilde L (t_{n_{\ell}}-t_{n_{\ell-1}}) .
}
Now note that for $k=n_{\ell-1}+1,\ldots,n_{\ell}$
\bas{
\bigl| g(\mathbf X(k))-g(\tilde\bX (k))\bigr|\le \sum_{r=n_{\ell-1}+1}^{k} \rho_{r-k}\, |X(U_{r},\theta_{r-1})-X(U_{r},\theta_{n_{\ell-1}})|.
}
This implies that for $\mathfrak n,n\in\N_{0}$ with $n_{\ell-1}\le \mathfrak n\le n\le n_{\ell}$
\bas{
\E\Bigl[\Bigl(&\sum_{k=\mathfrak n+1}^{n} \1_{\{\mathfrak N>k\}}\gamma_{k}\bigl| g(\mathbf X(k))-g(\tilde\bX (k))\bigr|\Bigr)^{p}\Bigr]^{1/p}\\
&\le\sum_{k=\mathfrak n+1}^{n}\gamma_{k} \,\E\Bigl[\Bigl( \1_{\{\mathfrak N>k\}}\bigl| g(\mathbf X(k))-g(\tilde\bX(k))\bigr|\Bigr)^{p}\Bigr]^{1/p}\\
&\le \sum_{k=\mathfrak n+1}^{n}\gamma_{k} \,\E\Bigl[\Bigl( \1_{\{\mathfrak N>k\}} \sum_{r=n_{\ell-1}+1}^{k}\rho_{r-k}\, |X(U_{r},\theta_{r-1})-X(U_{r},\theta_{n_{\ell-1}})|\Bigr)^{p}\Bigr]^{1/p}\\
&\le \sum_{k=\mathfrak n+1}^{n}\gamma_{k} \sum_{r=n_{\ell-1}+1}^{k} \varrho_{r-k} \,\E\Bigl[\Bigl( \1_{\{\mathfrak N>r\}} |X(U_{r},\theta_{r-1})-X(U_{r},\theta_{n_{\ell-1}})|\Bigr)^{p}\Bigr]^{1/p}\\
&\le \kappa_{3}  \tilde L (t_{n_\ell}-t_{n_{\ell-1}}) \sum_{k=\mathfrak n+1}^{n}\gamma_{k} .
}

3.) Note that 
\bas{
|\Delta \dtilde \Theta_{\mathfrak n,n}-\Delta\tilde \Theta_{\mathfrak n,n}|=|\gamma_{n} \, (g(\mathbf {\dtilde X}(n))-g(\mathbf {\tilde X}(n)))|\le \gamma_{n} \, \|\mathbf {\dtilde X}(n)-\mathbf {\tilde X}(n)\|_{\ell_{\varrho}^{d}}
}
and for $n=n_{\ell-1}+1,\dots , n_{\ell}$,
\bas{
\|\mathbf {\dtilde X}(n)-\mathbf {\tilde X}(n)\|_{\ell_{\varrho}^{d}}&=\sum_{k=-\infty}^{n_{\ell-1}} \varrho_{k-n} |X_{k}-X(U_{\ell,k},\theta_{n_{\ell-1}})|\\
&\le \sum_{k=-\infty}^{n_{\ell-1}} \varrho_{k-n} |X_{k}|+\sum_{k=-\infty}^{n_{\ell-1}} \varrho_{k-n} |X(U_{\ell,k},\theta_{n_{\ell-1}})|.
}
Hence,
\bas{
\E\bigl[\1_{\{\mathfrak N>n_{\ell-1}\}} \|\mathbf {\dtilde X}(n)-\mathbf {\tilde X}(n)\|^{p}_{\ell_{\varrho}^{d}}\bigr]^{1/p}]&\le \|\bx\|_{\ell_{\varrho}^{d}} {\sqrt\beta}^{n-n_{0}}+2C\sum_{k=-\infty}^{n_{\ell-1}}\varrho_{k-n}\\
&\le  \|\bx\|_{\ell_{\varrho}^{d}} {\sqrt\beta}^{n-n_{0}}+2(1-\sqrt\beta)^{-1}C\varrho_{n_{\ell-1}-n} 
}
and
\bas{
\E\Bigl[\1_{\{\mathfrak N >n_{\ell-1}\}}&\sum _{n=n_{\ell-1}+1}^{n_{\ell}}|\Delta \dtilde \Theta_{n}-\Delta\tilde \Theta_{n}|^{p}\Bigr]^{1/p}\le \gamma_{n_{\ell-1}+1} \sum_{n=n_{\ell-1}+1}^{n_{\ell}}\bigl( \|\bx\|_{\ell_{\varrho}^{d}} {\sqrt\beta}^{n-n_{0}}+2(1-\sqrt\beta)^{-1}C\varrho_{n_{\ell-1}-n} \bigr)\\
&\le\frac{1}{1-\sqrt\beta}\gamma_{n_{\ell-1}+1} \bigl( {\sqrt\beta}^{n_{\ell-1}-n_{0}} \|\bx\|_{\ell_{\varrho}^{d}} +2\|\varrho\|_{\ell_{1}} C \bigr).
}

4.) The inequalities (IV.a) and (IV.b) are direct consequences of Lemmas~\ref{le:842} and~\ref{le:1345} when conditioning on $\cF_{n_{\ell-1}}$.
\end{proof}

When choosing in estimate (II), $\mathfrak n=n_{\ell-1}$ and $n=n_{\ell}$ we obtain an estimate covering the same range of indices as the estimate (III). In order to obtain good results we will choose the sequence $(n_{\ell})_{\ell\in\N_{0}}$ in such a way that both error terms are of the same order. This motivates the following definition.

\begin{defi}Let $n_{0}\in\N_{0}$ and $\rho\in[ \sqrt{\gamma_{n_{0}+1}},\infty)$. We call the $\N_{0}$-valued sequence $(n_{\ell})_{\ell\in\N_{0}}$ a \emph{$\rho$-partition w.r.t.\ $(\gamma_{n})$} if  for every $\ell\in\N$, $n_{\ell}$ is the largest integer with
\bas{
t_{n_{\ell}}- t_{n_{\ell-1}}\le \rho \sqrt {\gamma_{n_{\ell-1}+1}},
}
where $(t_n)$ is as in~(\ref{def:tn}).
We call $n_{0}$ the \emph{starting value}.
\end{defi}

\begin{rem}
In the definition we  assume that $\rho\ge \sqrt{\gamma_{n_{0}+1}}$. This  guarantees that for every $\ell\in\N$, one has $\rho \sqrt{\gamma_{n_{\ell-1}+1}}\ge \gamma_{n_{\ell-1}+1}$ by monotonicity of $(\gamma_{n})$ so that $n_{\ell}>n_{\ell-1}$. This implies that $(n_\ell)$ is strictly increasing and that $\lim_{n\to\infty}n_{\ell}=\infty$.
\end{rem}

\begin{lemma}\label{le:3571}
Let $(\gamma_{n})_{n\in\N}$ be a $(0,\infty)$-valued decreasing non-summable sequence 
and let
 $(n_{\ell})_{\ell\in\N_{0}}$ be a $\rho$-partition for $(\gamma_{n})$.
\begin{enumerate}[label=(\roman*)]
 \item For $\ell\in\N$ with $\gamma_{n_{\ell-1}+1}<\rho^{2}$, one has that
 \bas{
  \sqrt{\gamma_{n_{\ell-1}+1}}\le \frac1{\rho-\sqrt{\gamma_{n_{\ell-1}+1}}} (t_{n_{\ell}}-t_{n_{\ell-1}}).
 }
 \item
Assume that for all $n= n_{0}+1,n_{0}+2,\dots$
\bas{
\frac { \gamma_{n}- \gamma_{n+1}}{ \gamma_{n}^{2}}\le \zeta
}
and that $\rho\zeta\sqrt{\gamma_{n_{0}+1}}<1$. Set $K=(1-\zeta\rho\sqrt{\gamma_{n_{0}+1}})^{-1}$. One has for every $\ell\in\N$ that
\bas{
\frac{\gamma_{n_{\ell-1}+1}}{\gamma_{n_{\ell}+1}}\le K \text{ \ and \ } \frac{ \gamma_{n_{\ell-1}+1}}{ \gamma_{n_{\ell}+1}}\le 1+\zeta K (  t_{n_{\ell}}- t_{n_{\ell-1}}) .
}
\end{enumerate}
\end{lemma}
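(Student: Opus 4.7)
The plan is to exploit two elementary facts about a $\rho$-partition: the \emph{maximality} built into the definition of $n_\ell$, and the monotonicity of $(\gamma_n)$. No deep ingredients from earlier sections are needed; the whole lemma is a careful telescoping together with one use of the defining inequality in its contrapositive form.

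For part (i), I would start from the observation that, since $n_\ell$ is the \emph{largest} integer satisfying the defining inequality, the next index $n_\ell+1$ must violate it, so
\begin{equation*}
  t_{n_\ell+1} - t_{n_{\ell-1}} \;>\; \rho \sqrt{\gamma_{n_{\ell-1}+1}}.
\end{equation*}
Splitting off the last summand via $t_{n_\ell+1} = t_{n_\ell} + \gamma_{n_\ell+1}$ and using that $(\gamma_n)$ is decreasing (hence $\gamma_{n_\ell+1} \le \gamma_{n_{\ell-1}+1}$), this rearranges to
\begin{equation*}
  t_{n_\ell} - t_{n_{\ell-1}} \;>\; \rho\sqrt{\gamma_{n_{\ell-1}+1}} - \gamma_{n_{\ell-1}+1} \;=\; \bigl(\rho - \sqrt{\gamma_{n_{\ell-1}+1}}\bigr)\sqrt{\gamma_{n_{\ell-1}+1}}.
\end{equation*}
Since $\gamma_{n_{\ell-1}+1} < \rho^2$, the factor $\rho - \sqrt{\gamma_{n_{\ell-1}+1}}$ is strictly positive and dividing through yields the claim.

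For part (ii), the core calculation is a telescoping sum. Using the standing assumption $\gamma_n - \gamma_{n+1} \le \zeta\gamma_n^2$, monotonicity of $(\gamma_n)$, and the definition of $(t_n)$,
\begin{equation*}
  \gamma_{n_{\ell-1}+1} - \gamma_{n_\ell+1}
  \;=\; \sum_{n=n_{\ell-1}+1}^{n_\ell}(\gamma_n - \gamma_{n+1})
  \;\le\; \zeta \sum_{n=n_{\ell-1}+1}^{n_\ell}\gamma_n^2
  \;\le\; \zeta\gamma_{n_{\ell-1}+1}(t_{n_\ell} - t_{n_{\ell-1}}).
\end{equation*}
Invoking the $\rho$-partition property $t_{n_\ell} - t_{n_{\ell-1}} \le \rho\sqrt{\gamma_{n_{\ell-1}+1}}$ together with $\gamma_{n_{\ell-1}+1}\le\gamma_{n_0+1}$ gives
\begin{equation*}
  \gamma_{n_{\ell-1}+1} - \gamma_{n_\ell+1} \;\le\; \zeta\rho\sqrt{\gamma_{n_0+1}}\,\gamma_{n_{\ell-1}+1},
\end{equation*}
which, since $\zeta\rho\sqrt{\gamma_{n_0+1}} < 1$ by assumption, rearranges to $\gamma_{n_\ell+1} \ge \gamma_{n_{\ell-1}+1}/K$, i.e.\ the first of the two bounds. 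For the second bound, I would divide the telescoping estimate above by $\gamma_{n_\ell+1}$ and then plug in the first bound to obtain
\begin{equation*}
  \frac{\gamma_{n_{\ell-1}+1}}{\gamma_{n_\ell+1}} - 1
  \;\le\; \zeta(t_{n_\ell} - t_{n_{\ell-1}})\,\frac{\gamma_{n_{\ell-1}+1}}{\gamma_{n_\ell+1}}
  \;\le\; \zeta K\,(t_{n_\ell} - t_{n_{\ell-1}}),
\end{equation*}
as claimed.

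There is no genuine obstacle here: the whole argument is bookkeeping around the defining property of $n_\ell$ and one telescoping. The only subtlety worth flagging is that the hypothesis $\zeta\rho\sqrt{\gamma_{n_0+1}} < 1$ is invoked \emph{uniformly in $\ell$}, which is legitimate because monotonicity of $(\gamma_n)$ together with $n_{\ell-1}\ge n_0$ gives $\sqrt{\gamma_{n_{\ell-1}+1}}\le\sqrt{\gamma_{n_0+1}}$, so the single constant $K$ governs all steps simultaneously.
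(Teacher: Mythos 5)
Your proof is correct and follows essentially the same route as the paper: maximality of $n_\ell$ plus monotonicity of $(\gamma_n)$ for part (i), and the key estimate $\gamma_{n_{\ell-1}+1}-\gamma_{n_\ell+1}\le \zeta\,\gamma_{n_{\ell-1}+1}(t_{n_\ell}-t_{n_{\ell-1}})$ combined with the partition property and $\gamma_{n_{\ell-1}+1}\le\gamma_{n_0+1}$ for part (ii). The only cosmetic difference is that you obtain that estimate by direct telescoping, whereas the paper encodes the same computation via a piecewise linear interpolation of $n\mapsto\gamma_{n+1}$; the content is identical.
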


\begin{proof} (i): This is an immediate consequence of
\bas{
t_{n_{\ell}}-t_{n_{\ell-1}}\ge \rho \sqrt{\gamma_{n_{\ell-1}+1}}-\gamma_{n_{\ell-1}+1}=(\rho- \sqrt{\gamma_{n_{\ell-1}+1}})  \sqrt{\gamma_{n_{\ell-1}+1}}.
}
(ii): Denote by $f:[0, \infty)\to(0,\infty)$ the function satisfying 
\bas{
f( t_{n})=  \gamma_{n+1}
}
for every $n\in\N_{0}$ and that is piecewise linear in between the points of $\{t_0,t_1,\ldots\}$.
First note that on each interval $( t_{n-1},t_{n})$
\bas{
-f'(s)= \frac{ \gamma_{n}-\gamma_{n+1}}{\gamma_{n}}\le \zeta \gamma_{n}
}
so that, for $\ell\in\N$,
\bas{
\gamma_{n_{\ell}+1}\ge \gamma_{n_{\ell-1}+1}- \zeta \gamma_{n_{\ell-1}+1} \rho \sqrt { \gamma_{n_{\ell-1}+1}}\ge (1- \zeta  \rho \sqrt { \gamma_{n_{0}+1}})\gamma_{n_{\ell-1}+1}
}
and
\bas{
\frac{\gamma_{n_{\ell-1}+1}}{\gamma_{n_{\ell}+1}}\le K.
}
One has
\bas{
 \gamma_{n_{\ell-1}+1}- \gamma_{n_{\ell}+1}=f(  t_{n_{\ell-1}})-f(  t_{n_{\ell}})\le \zeta  \gamma_{n_{\ell-1}+1} ( t_{n_{\ell}}- t_{n_{\ell-1}})
}
and
\bas{
\frac{ \gamma_{n_{\ell-1}+1}- \gamma_{n_{\ell}+1}}{ \gamma_{n_{\ell}+1}}\le \zeta K ( t_{n_{\ell}}-  t_{n_{\ell-1}}).
}
\end{proof}

We provide another technical estimate.
\red
\begin{lemma}\label{le:234787}
Let $\|\cdot\|$ be a norm induced by a scalar product $\llangle\cdot,\cdot\rrangle$ on $\R^d$ and let $p\ge 2$. One has for every $x,y\in\R^{d}$ that
\bas{
\|x+y\|^{p}\le \|x\|^{p}+ p \|x\|^{p-2} \llangle x,y\rrangle + 
\sfrac 12 p(p-1) (\|x\|\vee \|x+y\|)^{p-2}\|y\|^{2}.
}
\end{lemma}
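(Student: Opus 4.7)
The plan is to apply Taylor's theorem with integral remainder to the auxiliary function $\varphi\colon[0,1]\to\R$ defined by $\varphi(t)=\|x+ty\|^{p}$, so that $\varphi(0)=\|x\|^{p}$ and $\varphi(1)=\|x+y\|^{p}$ recover the two sides of the bound, while the linear Taylor term $\varphi'(0)=p\|x\|^{p-2}\llangle x,y\rrangle$ supplies the inner product term on the right-hand side.

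The key computation is to estimate $\varphi''$. Writing $N(t)=\|x+ty\|^{2}=\llangle x+ty,\,x+ty\rrangle$, so that $\varphi=N^{p/2}$ with $N'(t)=2\llangle x+ty,y\rrangle$ and $N''(t)=2\|y\|^{2}$, the chain rule gives at points with $N(t)>0$
\begin{equation*}
\varphi''(t)=p(p-2)\,\|x+ty\|^{p-4}\,\llangle x+ty,y\rrangle^{2}+p\,\|x+ty\|^{p-2}\,\|y\|^{2}.
\end{equation*}
Because $\|\cdot\|$ is induced by $\llangle\cdot,\cdot\rrangle$, Cauchy--Schwarz yields $\llangle x+ty,y\rrangle^{2}\le\|x+ty\|^{2}\|y\|^{2}$, whence
\begin{equation*}
\varphi''(t)\le p(p-1)\,\|x+ty\|^{p-2}\,\|y\|^{2}.
\end{equation*}
Regularity of $\varphi$ is not a serious obstacle: $N$ is a non-negative quadratic polynomial in $t$, so any zero $t_{0}\in[0,1]$ is automatically a double zero, and in a neighbourhood of $t_{0}$ one has $\varphi(t)=\|y\|^{p}|t-t_{0}|^{p}$, which is $C^{2}$ for $p\ge 2$ with $\varphi''$ extending continuously (equal to $0$ if $p>2$ and to $2\|y\|^{2}$ if $p=2$). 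Thus the bound on $\varphi''$ remains valid on all of $[0,1]$.

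To conclude, I would use the identity $x+ty=(1-t)x+t(x+y)$ together with the triangle inequality to obtain $\|x+ty\|\le(1-t)\|x\|+t\|x+y\|\le\|x\|\vee\|x+y\|$, and since $p-2\ge 0$ this gives $\varphi''(t)\le p(p-1)(\|x\|\vee\|x+y\|)^{p-2}\|y\|^{2}$. Plugging this into the integral form of Taylor's theorem,
\begin{equation*}
\varphi(1)=\varphi(0)+\varphi'(0)+\int_{0}^{1}(1-t)\varphi''(t)\,\dd t,
\end{equation*}
and evaluating $\int_{0}^{1}(1-t)\,\dd t=\tfrac12$ yields exactly the claimed inequality. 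The only point requiring a moment's care is the $C^{2}$-regularity of $\varphi$ at a possible zero of $N$, which is resolved by the explicit local form above; beyond that the proof is a direct computation.
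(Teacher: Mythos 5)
Your proof is correct and follows essentially the same route as the paper's: a second-order Taylor expansion with integral remainder along the segment from $x$ to $x+y$, a Cauchy--Schwarz bound on the second-derivative term, and the estimate $\|x+ty\|\le\|x\|\vee\|x+y\|$, the only cosmetic difference being that you work with the scalar function $t\mapsto\|x+ty\|^{p}$ instead of the Fr\'echet differentials of $z\mapsto\|z\|^{p}$. Your explicit treatment of the regularity at a zero of $\|x+ty\|$ is a small welcome addition that the paper passes over silently.
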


\begin{proof} Let $x,y\in\R^{d}$. Note that the function $h(z)=\|z\|^{p}$ $(z\in\R^{d})$ is twice continuously differentiable with differentials
\bas{
Dh(z)(a)=p\,\|z\|^{p-2} \llangle z,a\rrangle \text{ \ and \ }D^{2}h(z)(a,b)=p(p-2) \|z\|^{p-4} \llangle z,a\rrangle \llangle z,b\rrangle+ p |z|^{p-2}\llangle a,b \rrangle.
}
By Taylor's theorem, we thus have
\bas{|x+y|^{p}=|x|^{p}+p\,|x|^{p-2} \langle x, y\rangle+ \int_{0}^{1} D^{2} h(x+ry)(y,y)(1-r)\,\dd r.
}
Note that for every $r\in[0,1]$,  one has $|x+ry|\le |x|\vee|x+y|$ and
\bas{
\|D^{2} h(x+ry)(y,y)\|&\le p(p-2) \|x+ry\|^{p-4} \|\llangle x+ry,y\rrangle \llangle x+ry,y\rrangle|+ p \|x+ry\|^{p-2}\llangle y,y \rrangle\\
&\le p(p-1)(\|x\|\vee\|x+y\|)^{p-2} \|y\|^{2}.
}
The statement follows since $\int_{0}^{1}(1-r)\,\dd r=\frac 12$.
\end{proof}
\black
%
%
%

\section{Ordinary differential equation (ODE) based error analyses for the Adam optimizer}
\label{sec:technical_estimates}

In this section we prove the central estimates for the analysis of the Adam algorithm. The analysis is done in such a way that the impact of certain perturbation effects is made completely transparent in the propositions. Although this precise information is not used when deriving the main theorems, the results allow to immediately investigate the impact of the tuning parameters of  Adam algorithms such as the damping parameters and the size of the mini-batch in the case where the innovation is obtained by the simulation of mini-batches.
So it is straight-forward to deduce variants of our main theorems, where the precise role of certain tuning parameters is revealed.

\begin{prop}[\ODE\ based non-uniform error analysis for the Adam optimizer] 
\label{prop35-12} Let $d\in\N$ and let $\|\cdot\|$ be a norm on $\R^{d}$ induced by a scalar product $\llangle\cdot,\cdot\rrangle$ satifying $\|x\|\le |x|$ for all $x\in\R^{d}$.

\begin{enumerate}[label=(\roman*)]
\item  
\label{item:1_non_uniform}
\emph{Innovation.} Let $C,\tilde C,C',L,\tilde L\in[0,\infty)$, $p\in[2,\infty)$ and suppose that $(X,U)$ is a  $p$-regular innovation with parameter $(C,\tilde C,\tilde L)$ on the set $V\subset\R^{d}$ and that the Adam vector field $f$ is bounded by $C'$ and $L$-Lipschitz on $V$ with respect to the norm $\|\cdot\|$.

\item 
\label{item:2_non_uniform}
\emph{Adam algorithm.} Let $(\theta_{n})_{n\ge n_{0}}$ be an Adam algorithm started at time $n_{0}\in\N_{0}$ in a state $(\theta_{n_0},m_{n_{0}},v_{n_{0}})$ and let $\bx\in\ell_{\varrho}^{d}$ with 
\bas{\label{eq34284}
m_{n_{0}}=(1-\alpha) \sum_{k\in-\N_{0}} \alpha^{-k} x_{k} \text{ \ and \ } v^{(i)}_{n_{0}}=(1-\beta) \sum_{k\in-\N_{0}} \beta^{-k} \bigl(x^{(i)}_k\bigr)^{2}
}
for all $i=1,\dots,d$.

\item 
\label{item:3_non_uniform}
\emph{ODE.} 
Let $\Psi:[t_{n_{0}},\infty)\to\R^{d}$ be a solution to the ODE
\bas{
\dot \Psi_{t} = f(\Psi_t)
}
staying in $V\subset\R^{d}$.

\item 
\label{item:4_non_uniform}
\emph{Local monotonicity of ODE.} 
Let $(\mathfrak R_{t})_{t\ge t_{n_{0}}}$ a decreasing $(0,\infty]$-valued mapping and suppose that for a fixed  $c_{1}\in(0,\infty)$  for every $t\in[t_{n_{0}},\infty)$
and $x\in V\cap \overline{B_{\|\cdot\|}(\Psi_{t},\mathfrak R_{t})}$ one has
\bas{
\llangle f(x)-f(\Psi_{t}),x-\Psi_{t}\rrangle\le -c_{1} \|x-\Psi_{t}\|^{2}.
}
\item 
\label{item:5_non_uniform}
\emph{$\rho$-partition.} Let $\rho\in[\sqrt{\gamma_{n_{0}+1}},\infty)$ and let $(n_{\ell})_{\ell\in\N_{0}}$ be a $\rho$-partition.
\item 
\label{item:6_non_uniform}
\emph{Technical assumptions.} Let $\kappa_{1},\dots,\kappa_{6}$ as in~(\ref{eq237643}) and $c',\zeta,\delta_{1},\delta_{2}\in(0,\infty)$. We suppose validity of the following inequalities:
\begin{enumerate}
\item  
\label{item:6a_non_uniform}
$\zeta\rho\sqrt{\gamma_{n_{0}+1}}<1$ and $\gamma_{n_{0}+1}\le \delta_{1}<\rho^{2}$
 
\item 
\label{item:6b_non_uniform}
for every $n= n_{0}+1,n_{0}+2,\dots$, one has
$
\frac { \gamma_{n}- \gamma_{n+1}}{ \gamma_{n}^{2}}\le \zeta
$
\item  
\label{item:6c_non_uniform}
$2c_{1}-L^2\rho\sqrt{\gamma_{n_{0}+1}} -\zeta K \ge 2c'$, for   $K:=(1-\zeta\rho\sqrt{\gamma_{n_{0}+1}})^{-1}$
\item   
\label{item:6d_non_uniform}
$\displaystyle{\frac\rho{\rho-\sqrt{\delta_{1}}}(\kappa_{3}\rho^{2}\tilde L+\sfrac12LC'\rho^{2}+(\kappa_{2}+\kappa_{4})C+(\kappa_1+\kappa_{5}) \|\bx\|_{\ell_{\varrho}^{d}})\sqrt{\gamma_{n_{0}+1}} \le \delta_{2}}$.
\end{enumerate}
\end{enumerate}
We consider the stopping time 
\bas{
\mathfrak N=\inf\{n\ge n_{0}: \theta_{n}\not\in V\}\wedge \inf\{n_{\ell}: \ell\in\N_{0}, \|\theta_{n_{\ell}}-\Psi_{t_{n_{\ell}}}\|>\mathfrak R_{t_{n_{\ell}}}\}
}
Then the sequence $(e_{\ell})_{\ell\in\N_{0}}$ given by
\bas{
e_{\ell}=\gamma_{n_{\ell}+1}^{-1} \,\E[\1_{\{\mathfrak N\ge n_{\ell}\} }\|\theta_{n_{\ell}}-\Psi_{t_{n_{\ell}}}\|^{p} ]^{2/p}
}
satisfies
\bas{\label{eq24811-0}
e_{\ell}\le (1-2c'(t_{n_{\ell}}-t_{n_{\ell-1}}) )\, e_{\ell-1} +\bigl(K(2 \sqrt{e_{\ell-1}} +\delta_{2}) a_{\ell}+b_{\ell}\bigr)(t_{n_{\ell}}-t_{n_{\ell-1}}),
}
where
\bas{\label{eq24811-1}
a_{\ell}= \frac1{\rho-\sqrt{\delta_{1}}} \aleph_{\ell}+\sfrac12 LC'\rho, \ \ 
b_{\ell}=\bigl((p-1)+(\sfrac12p(p-1))^{2/p}\bigr)\Bigl(2K\kappa_{6}^{2} \tilde C^{2} 
+\frac{4 K}{\rho-\sqrt{\delta_{1}}} \aleph_{\ell}^{2}\sqrt{\gamma_{n_{\ell-1}+1}}\Bigr)
}
and
\bas{\label{eq24811-2}
\aleph_{\ell}=\kappa_{3}\rho^{2}\tilde L  +(\kappa_{2}+\kappa_{4}) C+(\kappa_{1}+\kappa_{5})\beta^{(n_{\ell-1}-n_{0})/2}\|\bx\|_{\ell^{d}_{\varrho}}.
}
\end{prop}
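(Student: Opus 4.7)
The plan is to establish a one-step recursion from $e_{\ell-1}$ to $e_\ell$ by expanding $\|\theta_{n_\ell}-\Psi_{t_{n_\ell}}\|^p$ around $X:=\theta_{n_{\ell-1}} - \Psi_{t_{n_{\ell-1}}}$ via Lemma~\ref{le:234787} and exploiting the local monotonicity of $f$ to extract a contractive factor $(1-2c'\Delta t)$. Write $\Delta t = t_{n_\ell}-t_{n_{\ell-1}}$ and $Y = (\theta_{n_\ell}-\theta_{n_{\ell-1}}) - (\Psi_{t_{n_\ell}}-\Psi_{t_{n_{\ell-1}}})$, so that $\theta_{n_\ell}-\Psi_{t_{n_\ell}} = X + Y$. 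I would decompose $Y = Y_1 + Y_2$ with $Y_1 = \Delta t\cdot(f(\theta_{n_{\ell-1}}) - f(\Psi_{t_{n_{\ell-1}}}))$ and $Y_2 = R^{\mathrm{ODE}} + R^{\mathrm{Adam}}$, where $R^{\mathrm{ODE}} = \int_{t_{n_{\ell-1}}}^{t_{n_\ell}}(f(\Psi_{t_{n_{\ell-1}}}) - f(\Psi_s))\,\dd s$ is pointwise bounded by $\sfrac12 L C'(\Delta t)^2$ using Lipschitzness and boundedness of $f$ on $V$, and $R^{\mathrm{Adam}} = (\theta_{n_\ell}-\theta_{n_{\ell-1}}) - (\bar\Theta_{n_\ell}-\bar\Theta_{n_{\ell-1}})$ telescopes through $\Theta,\tilde\Theta,\dtilde{\Theta},\bar\Theta$ and is handled by parts (I)--(IV.a) of Proposition~\ref{prop:234}. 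Trading $\sqrt{\gamma_{n_{\ell-1}+1}}$ against $\Delta t/(\rho-\sqrt{\delta_1})$ via Lemma~\ref{le:3571}(i), the bias part of $R^{\mathrm{Adam}}$ will have $L^p$-size of order $\aleph_\ell\sqrt{\gamma_{n_{\ell-1}+1}}$ on $\{\mathfrak N\ge n_\ell\}$, and the conditional noise from (IV.a) will have $L^p$-size $\kappa_6\tilde C\sqrt{\gamma_{n_{\ell-1}+1}\Delta t}$.

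Next I would apply Lemma~\ref{le:234787} with $x=X$, $y=Y$ and take expectation on $\{\mathfrak N\ge n_\ell\}$. The first-order term $p\|X\|^{p-2}\llangle X,Y\rrangle$ splits: the $Y_1$-part, combined with the local monotonicity of hypothesis~(iv) (applicable because $\|X\|\le\mathfrak R_{t_{n_{\ell-1}}}$ on $\{\mathfrak N\ge n_\ell\}$), produces the contractive contribution $-pc_1\Delta t\|X\|^p$, while the $Y_2$-part yields a cross-term bounded by Cauchy--Schwarz by $p\|X\|^{p-1}\|Y_2\|$. The second-order term $\sfrac12 p(p-1)(\|X\|\vee\|X+Y\|)^{p-2}\|Y\|^2$ splits using $\|Y_1\|\le L\Delta t\|X\|$ and the triangle inequality into a parasitic $L^2(\Delta t)^2\|X\|^p$ contribution and terms involving $\|Y_2\|^2$. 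Applying Hölder on the cross-terms and raising the resulting inequality for $\E[\|X+Y\|^p]$ to the power $2/p$ (using $(1-a)^{2/p}\le 1-(2/p)a$ for $a\in[0,1)$), I would obtain, on the event in question,
\[
\E[\|X+Y\|^p]^{2/p}
\le \E[\|X\|^p]^{2/p}\bigl(1 - 2c_1\Delta t + L^2(\Delta t)^2\bigr)
+ 2\,\E[\|X\|^p]^{1/p}\|Y_2\|_{L^p} + \tilde\kappa_p\|Y_2\|_{L^p}^2,
\]
where $\tilde\kappa_p$ packages the prefactor $(p-1) + (\sfrac12 p(p-1))^{2/p}$ coming from the $\sfrac12 p(p-1)$ in Lemma~\ref{le:234787} after the $2/p$-power manipulation.

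To conclude, I would divide by $\gamma_{n_\ell+1}$ and use $\E[\|X\|^p]^{2/p}=\gamma_{n_{\ell-1}+1}\,e_{\ell-1}$. Lemma~\ref{le:3571}(ii) gives $\gamma_{n_{\ell-1}+1}/\gamma_{n_\ell+1}\le 1+\zeta K\Delta t$ for the contractive factor and $\gamma_{n_{\ell-1}+1}/\gamma_{n_\ell+1}\le K$ for the error terms; the bound $(\Delta t)^2\le\rho\sqrt{\gamma_{n_0+1}}\Delta t$ converts $L^2(\Delta t)^2$ into $L^2\rho\sqrt{\gamma_{n_0+1}}\Delta t$. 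Hypothesis~(vi)(c) then absorbs the net coefficient of $\Delta t$ in front of $e_{\ell-1}$ down to $1-2c'\Delta t$, while (vi)(d) bounds the uniformly constant part of $a_\ell$ by $\delta_2$ (justifying the extra $\delta_2$ inside the bracket in \cref{eq24811-0}). Identifying the surviving constants with the announced $\aleph_\ell$, $a_\ell$ and $b_\ell$ yields the claimed inequality.

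The main obstacle will be the bookkeeping: the four error sources (Adam's $\sigma_n$-correction via (I), the frozen-parameter bias of $\tilde\Theta$ via (II), the i.i.d.-stationary replacement $\dtilde{\Theta}$ via (III), and the martingale noise via (IV.a)) must each be routed either into the linear-in-$\sqrt{e_{\ell-1}}$ cross-term (contributing to $a_\ell$) or into the quadratic variance term (contributing to $b_\ell$), with the precise $\beta^{(n_{\ell-1}-n_0)/2}$, $C$, $\tilde C$, $\tilde L$, and $\|\bx\|_{\ell_\varrho^d}$ dependencies tracked through to $\aleph_\ell$. The sharp contraction $(1-2c'\Delta t)$ survives precisely because hypothesis~(vi)(c) is tuned so that $2c_1-L^2\rho\sqrt{\gamma_{n_0+1}}-\zeta K\ge 2c'$; i.e., the Lipschitz overshoot from the quadratic remainder of $\|\cdot\|^p$ and the $\gamma$-ratio loss are both absorbed into the monotonicity margin.
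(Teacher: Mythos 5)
Your overall architecture (one-step recursion via Lemma~\ref{le:234787}, local monotonicity for the contraction, Proposition~\ref{prop:234} for the approximation errors, Lemma~\ref{le:3571} for the step-size ratios) matches the paper, but there is a genuine gap in how you treat the stochastic fluctuation coming from (IV.a). Writing $\Delta t=t_{n_\ell}-t_{n_{\ell-1}}$, you lump the term $\dtilde\Theta_{n_{\ell-1}:n_\ell}-\Delta t\, f(\theta_{n_{\ell-1}})$ into $Y_2$ and then control the first-order term of Lemma~\ref{le:234787} by Cauchy--Schwarz, which produces the cross term $2\,\E[\|X\|^p]^{1/p}\|Y_2\|_{L^p}$. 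Since this fluctuation has $L^p$-size $\kappa_6\tilde C\sqrt{\gamma_{n_{\ell-1}+1}\Delta t}$, after dividing by $\gamma_{n_\ell+1}$ your cross term contributes a quantity of order $\tilde C\sqrt{e_{\ell-1}}\,\sqrt{\Delta t}$, i.e.\ of order $\sqrt{\Delta t}$ rather than $\Delta t$, and with $\tilde C$ entering the coefficient of $\sqrt{e_{\ell-1}}$. This cannot be brought into the claimed form \cref{eq24811-0}, where every error term carries a full factor $t_{n_\ell}-t_{n_{\ell-1}}$ and where $a_\ell$ contains no $\tilde C$; moreover, iterating the weaker recursion you would obtain (fixed point of order $1/\Delta t$ instead of $O(1)$ for $e_\ell$) would destroy the $\sqrt{\gamma_n}$-rate that the proposition is designed to deliver.

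The missing idea is that the noise must be conditionally centered and then annihilated by conditioning, not estimated by Cauchy--Schwarz. The paper sets $A_\ell=\1_{\{\mathfrak N>n_{\ell-1}\}}\E[\theta_{n_\ell\wedge\mathfrak N}-\theta_{n_{\ell-1}}-\dtilde\Theta_{n_{\ell-1}:(n_\ell\wedge\mathfrak N)}\mid\cF_{n_{\ell-1}}]$, collects everything with vanishing conditional mean into $M_\ell$, and applies Lemma~\ref{le:234787} with $x=\Upsilon_\ell$ (the $\cF_{n_{\ell-1}}$-measurable quantity containing the old error, the drift difference $(f(\theta_{n_{\ell-1}})-f(\Psi_{t_{n_{\ell-1}}}))\Delta t$, and the bias $A_\ell-\tilde A_\ell$) and $y=M_\ell$; after conditioning on $\cF_{n_{\ell-1}}$ the linear term $p\|\Upsilon_\ell\|^{p-2}\llangle\Upsilon_\ell,M_\ell\rrangle$ disappears, so $M_\ell$ enters only through $\E[\|M_\ell\|^p]^{2/p}$, which by (IV.a), (I)--(III) and Lemma~\ref{le:3571} is of order $\bigl(2K\kappa_6^2\tilde C^2+\tfrac{4K}{\rho-\sqrt{\delta_1}}\aleph_\ell^2\sqrt{\gamma_{n_{\ell-1}+1}}\bigr)\Delta t$ and lands in $b_\ell$. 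Only the genuinely small bias $A_\ell-\tilde A_\ell$, of order $a_\ell\sqrt{\gamma_{n_{\ell-1}+1}}\,\Delta t$, is allowed to hit the cross term with $\sqrt{e_{\ell-1}}$, which is why the claimed $a_\ell$ is free of $\tilde C$. Your remaining steps (the bound $\tfrac12 LC'(\Delta t)^2$ for the ODE remainder, the use of (I)--(III), of Lemma~\ref{le:3571}, and of hypotheses (c)--(d)) are in line with the paper, but without this conditional-centering decomposition the recursion you derive is strictly weaker than \cref{eq24811-0}, so the proof as proposed does not go through.
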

\black

\begin{proof}
We use the approximations $(\Theta_n),(\tilde\Theta_n)$ and $(\dtilde \Theta_n)$ as introduced in Section~\ref{sec:approx_intro} and briefly write for $\mathfrak n,n\in\N_{0}$ with $n_{0}\le\mathfrak n\le n$,
\bas{
\Theta_{\mathfrak n:n}=\Theta_{n}-\Theta_{\mathfrak n}, \ \tilde\Theta_{\mathfrak n:n}=\tilde\Theta_{n}-\tilde\Theta_{\mathfrak n} \text{ \ and \ } \dtilde\Theta_{\mathfrak n:n}=\dtilde\Theta_{n}-\dtilde\Theta_{\mathfrak n}.
}
We derive a recursive estimate for $e_{\ell}$. For this we let
 \bas{ A_{\ell}=\1_{\{\mathfrak N>n_{\ell-1}\}} \E\bigl[ \theta_{n_{\ell}\wedge \mathfrak N}-\theta_{n_{\ell-1}}-\dtilde \Theta_{n_{\ell-1}:(n_{\ell}\wedge\mathfrak N)}\big|\cF_{n_{\ell-1}}\bigr]
}
and 
\bas{
M_{\ell}=\1_{\{\mathfrak N>n_{\ell-1}\}} \bigl(\dtilde \Theta_{n_{\ell-1}:n_{\ell}}-f(\theta_{n_{\ell-1}})(t_{n_{\ell}}-t_{n_{\ell-1}}) +  \theta_{n_{\ell}\wedge \mathfrak N}-\theta_{n_{\ell-1}}-\dtilde \Theta_{n_{\ell-1}:(n_{\ell}\wedge\mathfrak N)}\bigr)-A_{\ell}.
}
Note that on the event $\{\mathfrak N\ge n_{\ell}\}$ one has that
\bas{
\theta_{n_{\ell}}=\theta_{n_{\ell-1}}+ A_{\ell} +M_{\ell}
+f(\theta_{n_{\ell-1}})(t_{n_{\ell}}-t_{n_{\ell-1}}).
}
Moreover, write
\bas{
\Psi_{t_{n_{\ell}}}=\Psi_{t_{n_{\ell-1}}}+f(\Psi_{t_{n_{\ell-1}}}) (t_{n_{\ell}}-t_{n_{\ell-1}}) +\tilde A_{\ell},
}
where
$\tilde A_{\ell}= \Psi_{t_{n_{\ell}}}-(\Psi_{t_{n_{\ell-1}}}+f(\Psi_{t_{n_{\ell-1}}}) (t_{n_{\ell}}-t_{n_{\ell-1}}))$.
Before we derive appropriate estimates for $A_{\ell},\tilde A_{\ell},M_{\ell}$ we analyse how these estimates enter into the recursive estimates. Abridge 
\bas{
\Upsilon_{\ell}=\1_{\{\mathfrak N>n_{\ell-1}\}}(\theta_{n_{\ell-1}}-\Psi_{t_{n_{\ell-1}}}
+(f(\theta_{n_{\ell-1}})-f(\Psi_{t_{n_{\ell-1}}}))(t_{n_{\ell}}-t_{n_{\ell-1}})+
 A_{\ell} -\tilde A_{\ell} )
}
and note that $\Upsilon_{\ell}$  is $\cF_{n_{\ell-1}}$-measurable, $\E[M_{\ell}|\cF_{n_{\ell-1}}]=0$ and, on $\{\mathfrak N\ge n_{\ell}\}$, 
\bas{ \theta_{n_{\ell}}- \Psi_{t_{n_{\ell}}}= \Upsilon_{\ell}+M_{\ell}.
}
Using these properties together with  Lemma~\ref{le:234787} and the H\"older inequality  we get that
\begin{align}\begin{split}\label{eq:3256}
e_{\ell}&=\gamma_{n_{\ell}+1}^{-1}\,\E[\1_{\{\mathfrak N\ge n_{\ell}\} } \|\Upsilon_{\ell}+M_{\ell}\|^{p}   ]^{2/p}\le \gamma_{n_{\ell}+1}^{-1}\,\E[\|\Upsilon_{\ell}+M_{\ell}\|^{p}   ]^{2/p}\\
 &= \gamma_{n_{\ell}+1}^{-1}\,\E\bigl[\E[ \|\Upsilon_{\ell} +M_{\ell}\|^{p}|\cF_{n_{\ell-1}}]   \bigr]^{2/p}\\
 &\le \gamma_{n_{\ell}+1}^{-1} \,\E\bigl[ \|\Upsilon_{\ell} \|^{p}+ \sfrac 12p(p-1) (\|\Upsilon_{\ell}\|^{p-2}+\|M_{\ell}\|^{p-2})\|M_{\ell}\|^{2} \bigr]^{2/p}\\
  &= \gamma_{n_{\ell}+1}^{-1} \,\bigl(\E[ \|\Upsilon_{\ell} \|^{p}]+ \sfrac 12p(p-1) (\E[ \|\Upsilon_{\ell}\|^{p-2}|M_{\ell}|^{2}]+\E[\|M_{\ell}\|^{p}]) \bigr)^{2/p}\\
  &\le \gamma_{n_{\ell}+1}^{-1} \,\bigl(\E[ \|\Upsilon_{\ell} \|^{p}]+ \sfrac 12p(p-1) (\E[ \|\Upsilon_{\ell}\|^{p}]^{\frac{p-2}p}\,\E[\|M_{\ell}\|^{p}]^{2/p}+\E[\|M_{\ell}\|^{p}]) \bigr)^{2/p}.
 \end{split}
\end{align}
 Using that for all $a,b\ge0$ and $h(z)=z^{p/2}$ ($z\in[0,\infty)$) one has $h(a+b)\ge h(a)+h'(a) b$
 we get that
 \bas{
 \bigl(\E[\|\Upsilon_{\ell}\|^{p}&]^{2/p}+(p-1) \E[\|M_{\ell}\|^{p}]^{2/p}\bigr)^{p/2}\\
 &\ge \E[\|\Upsilon_{\ell}\|^{p}]+\sfrac p2 \bigl(\E[\|\Upsilon_{\ell}\|^{p}]^{2/p}\bigr)^{\frac p2-1}(p-1) \E[\|M_{\ell}\|^{p}]^{2/p}.
 }
 Thus we obtain with~(\ref{eq:3256}) that
 \begin{align}\begin{split}\label{eq:23567}
 e_{\ell}&\le \gamma_{n_{\ell}+1}^{-1} \Bigl(\bigl(\E[\|\Upsilon_{\ell}\|^{p}]^{2/p}+(p-1) \,\E[\|M_{\ell}\|^{p}]^{2/p}\bigr)^{p/2}+\sfrac12 p(p-1)\E[\|M_{\ell}\|^{p}]
 \Bigr)^{2/p}\\
 &\le  \gamma_{n_{\ell}+1}^{-1} \bigl(\E[\|\Upsilon_{\ell}\|^{p}]^{2/p}+(p-1)\, \E[\|M_{\ell}\|^{p}]^{2/p}+(\sfrac 12p(p-1))^{2/p}\,\E[\|M_{\ell}\|^{p}]^{2/p}\bigr)\\
 &\le  \gamma_{n_{\ell}+1}^{-1} \bigl(\E[\|\Upsilon_{\ell}\|^{p}]^{2/p}+\mathfrak p\, \E[\|M_{\ell}\|^{p}]^{2/p}\bigr),
 \end{split}\end{align}
 where $\mathfrak p:=p-1+(\sfrac 12p(p-1))^{2/p}$. 
%

 Next, we will provide an estimate for $\E[\|M_{\ell}\|^{p}]$. 
 First note that by Proposition~\ref{prop:234} (IV.a)
\bas{\E\bigl[ \1_{\{\mathfrak N>n_{\ell-1}\}} \bigl\|\dtilde \Theta_{n_{\ell-1},n_{\ell}}-f(\theta_{n_{\ell-1}})(t_{n_{\ell}}-t_{n_{\ell-1}})\bigr\| ^{p}\bigr]^{1/p}\le \kappa_{6} \tilde C \sqrt{\gamma_{n_{\ell-1}}(t_{n_{\ell}}-t_{n_{\ell-1}})}
}
Moreover, using estimates (I) to (III)  and the fact  that $(n_{\ell})$ is a $\rho$-partition we get that
\begin{align}\begin{split}\label{eq:135}
\E[\1_{\{\mathfrak N> n_{\ell-1}\}}&\|\theta_{n_{\ell}\wedge \mathfrak N}-\theta_{n_{\ell-1}}-\dtilde \Theta_{n_{\ell}-1:(n_{\ell}\wedge\mathfrak N)}\|^{p}]^{1/p}\\
&\le \bigl(\kappa_{1} \beta^{\frac12(n_{\ell-1}+1)-n_{0}}\|\bx\|_{\ell^{d}_\rho}
+\kappa_{2} C  \bigr) \gamma_{n_{\ell-1}+1} \beta^{n_{\ell-1}+1}+ \kappa_{3}\tilde L  (t_{n_{\ell}}-t_{n_{\ell-1}})^{2}\\
&\qquad + \gamma_{n_{\ell-1}+1} \bigl(\kappa_{4} C+\kappa_{5}\beta^{(n_{\ell-1}-n_{0})/2}\|\bx\|_{\ell_{\varrho}})\\
&\le \bigl(\underbrace{  \kappa_{3}\rho^{2}\tilde L  +(\kappa_{2}+\kappa_{4}) C+(\kappa_{1}+\kappa_{5})\beta^{(n_{\ell-1}-n_{0})/2}\|\bx\|_{\ell^{d}_{\varrho}}}_{=\aleph_{\ell}}\bigr)  \gamma_{n_{\ell-1}+1} .
\end{split}
\end{align}
By Jensen's inequality, one has that
\bas{\label{eq8346-2}
 \E[\|A_{\ell}\|^{p}]^{1/p}\le \aleph_{\ell}\gamma_{n_{\ell-1}+1}
}
so that we arrive with Lemma~\ref{le:3571} at
\bas{\label{eq72841}
\gamma_{n_{\ell}+1}^{-1}\,\E[\|M_{\ell}\|^{p}]^{2/p} &\le \bigl( \E\bigl[ \1_{\{\mathfrak N>n_{\ell-1}\}} \bigl\|\dtilde \Theta_{n_{\ell-1},n_{\ell}}-f(\theta_{n_{\ell-1}})(t_{n_{\ell}}-t_{n_{\ell-1}})\bigr\| ^{p}\bigr]^{1/p}+ 2\,\E[\|A_{\ell}\|^{p}]^{1/p}\bigr)^{2}\\
&\le \gamma_{n_{\ell}+1}^{-1}\bigl( \kappa_{6} \tilde C \sqrt{\gamma_{n_{\ell-1}+1}(t_{n_{\ell}}-t_{n_{\ell-1}})}
+2 \aleph_{\ell}\gamma_{n_{\ell-1}+1}\bigr)^{2}\\
&\le 2K\kappa_{6}^{2} \tilde C^{2} (t_{n_{\ell}}-t_{n_{\ell-1}})
+4 K\aleph_{\ell}^{2}\gamma_{n_{\ell-1}+1}\\
&\le \Bigl(2K\kappa_{6}^{2} \tilde C^{2} 
+\frac{4 K}{\rho-\sqrt{\delta_{1}}} \aleph_{\ell}^{2}\sqrt{\gamma_{n_{\ell-1}+1}}\Bigr)(t_{n_{\ell}}-t_{n_{\ell-1}}).}
Together with~(\ref{eq:23567}) 
we obtain that
\begin{align}\label{eq:23567-2}\begin{split}
 e_{\ell} &\le  \gamma_{n_{\ell}+1}^{-1} \bigl(\E[\|\Upsilon_{\ell}\|^{p}]^{2/p}+\mathfrak p\, \E[\|M_{\ell}\|^{p}]^{2/p}\bigr)\\
 &\le \gamma_{n_{\ell}+1}^{-1} \E[\|\Upsilon_{\ell}\|^{p}]^{2/p}+\mathfrak p\Bigl(2K\kappa_{6}^{2} \tilde C^{2} 
+\frac{4 K}{\rho-\sqrt{\delta_{1}}} \aleph_{\ell}^{2}\sqrt{\gamma_{n_{\ell-1}+1}}\Bigr)(t_{n_{\ell}}-t_{n_{\ell-1}}).
\end{split}\end{align}

Next, we provide an estimate for $\E[\|\Upsilon_{\ell}\|^{p}]$. 
Using that $\Psi$ is $L$-Lipschitz and $C'$-bounded on $V$ w.r.t.\ $\|\cdot\|$ we conclude that
\bas{\label{eq78234}
\|\tilde A_{\ell}\|&\le \int _{t_{n_{\ell-1}}}^{t_{n_{\ell}}} \|f(\Psi_{s})-f(\Psi_{t_{n_{\ell-1}}})\| \,\dd s \le L  \int _{t_{n_{\ell-1}}}^{t_{n_{\ell}}}\int _{t_{n_{\ell-1}}}^{s}\|f(\Psi_{u})\|\,\dd u\,\dd s
\\
&\le\sfrac 12 LC' (t_{n_{\ell}}-t_{n_{\ell-1}})^{2}.
}
Hence, we get  with~(\ref{eq8346-2}) and Lemma~\ref{le:3571} that
\begin{align}\begin{split}\label{eq893456}
E[\|A_{\ell}-\tilde A_{\ell}\|^{p}]^{1/p}&\le \aleph_{\ell}\gamma_{n_{\ell-1}+1}+\sfrac 12 LC' (t_{n_{\ell}}-t_{n_{\ell-1}})^{2}\\
&\le \Bigl(\underbrace {\frac1{\rho-\sqrt\delta} \aleph_{\ell}+\sfrac12 LC'\rho}_{=a_{\ell}}\Bigr) \sqrt{\gamma_{n_{\ell-1}+1}}(t_{n_{\ell}}-t_{n_{\ell-1}}) .
\end{split}\end{align}
As consequence of the monotonicity assumption (4) and the Lipschitz continuity of $f$, one has on  $\{\mathfrak N>n_{\ell-1}\}$ that
\bas{\|\theta_{n_{\ell-1}}&-\Psi_{t_{n_{\ell-1}}}
+(f(\theta_{n_{\ell-1}})-f(\Psi_{t_{n_{\ell-1}}}))(t_{n_{\ell}}-t_{n_{\ell-1}})\|^{2}\\
&=\|\theta_{n_{\ell-1}}-\Psi_{t_{n_{\ell-1}}}\|^{2}+ 2(t_{n_{\ell}}-t_{n_{\ell-1}}) \llangle \theta_{n_{\ell-1}}-\Psi_{t_{n_{\ell-1}}},f(\theta_{n_{\ell-1}})-f(\Psi_{t_{n_{\ell-1}}})\rrangle\\
&\qquad\qquad\qquad\qquad\qquad+(t_{n_{\ell}}-t_{n_{\ell-1}})^{2} \|f(\theta_{n_{\ell-1}})-f(\Psi_{t_{n_{\ell-1}}})\|^{2}\\
&\le (1-2c_{1}(t_{n_{\ell}}-t_{n_{\ell-1}})+L^{2}(t_{n_{\ell}}-t_{n_{\ell-1}})^{2})\,\|\theta_{n_{\ell-1}}-\Psi_{t_{n_{\ell-1}}}\|^{2}.
}
As consequence of assumption~(c) we have that  $2c_{1}-L^{2}(t_{n_{\ell}}-t_{n_{\ell-1}})\ge 0$  and we get that
\bas{
\|\Upsilon_{\ell}\|^{2}\le \1_{\{\mathfrak N>n_{\ell-1}\}}\bigl(&(1-2c_{1}(t_{n_{\ell}}-t_{n_{\ell-1}})+L^{2}(t_{n_{\ell}}-t_{n_{\ell-1}})^{2})\,\|\theta_{n_{\ell-1}}-\Psi_{t_{n_{\ell-1}}}\|^{2}\\
&+2\|\theta_{n_{\ell-1}}-\Psi_{t_{n_{\ell-1}}}\|\,\| A_{\ell} -\tilde A_{\ell} \|+\| A_{\ell} -\tilde A_{\ell} \|^{2}\bigr).
}
Now note that by assumption~(d),  we have that $a_{\ell}(t_{n_{\ell}}-t_{n_{\ell-1}})\le \delta_{2}$ for all $\ell\in\N$  so that we arrive  with~(\ref{eq893456}) and the Cauchy-Schwarz inequality at
\begin{align}\begin{split}\label{eq62467}
\E[\|\Upsilon_{\ell}\|^{p}]^{2/p}&\le  (1-2c_{1}(t_{n_{\ell}}-t_{n_{\ell-1}})+L^{2}(t_{n_{\ell}}-t_{n_{\ell-1}})^{2})\,\E[\1_{\{\mathfrak N>n_{\ell-1}\}} \|\theta_{n_{\ell-1}}-\Psi_{t_{n_{\ell-1}}}\|^{p}]^{2/p}\\
&\qquad +2 \,\E[\1_{\{\mathfrak N>n_{\ell-1}\}} \|\theta_{n_{\ell-1}}-\Psi_{t_{n_{\ell-1}}}\|^{p}]^{1/p} \,\E[\| A_{\ell} -\tilde A_{\ell} \|^{p}]^{1/p}+\E[\| A_{\ell} -\tilde A_{\ell} \|^{p}]^{2/p} \\
&\le (1-2c_{1}(t_{n_{\ell}}-t_{n_{\ell-1}})+L^{2}(t_{n_{\ell}}-t_{n_{\ell-1}})^{2}) \gamma_{n_{\ell-1}+1}e_{\ell-1}+ 2 \gamma_{n_{\ell-1}+1} \sqrt{e_{\ell-1}} a_{\ell} (t_{n_{\ell}}-t_{n_{\ell-1}})\\
&\qquad+a_{\ell}^{2} \gamma_{n_{\ell-1}+1}(t_{n_{\ell}}-t_{n_{\ell-1}})^{2}\\
&\le (1-2c_{1}(t_{n_{\ell}}-t_{n_{\ell-1}})+L^{2}(t_{n_{\ell}}-t_{n_{\ell-1}})^{2}) \gamma_{n_{\ell-1}+1}e_{\ell-1}\\
&\qquad+ \gamma_{n_{\ell-1}+1}(2 \sqrt{e_{\ell-1}}+\delta_{2}) a_{\ell} (t_{n_{\ell}}-t_{n_{\ell-1}}).
\end{split}\end{align}
We apply Lemma~\ref{le:3571} and use assumption (c) to  deduce that
\bas{
\gamma_{n_{\ell}+1}^{-1}\,\E[\|\Upsilon_{\ell}\|^{p}]^{2/p}&\le \underbrace{(1+\zeta K(t_{n_{\ell}}-t_{n_{\ell-1}})) (1-2c_{1}(t_{n_{\ell}}-t_{n_{\ell-1}})+L^{2}(t_{n_{\ell}}-t_{n_{\ell-1}})^{2})}_{\le 1-2c'(t_{n_{\ell}}-t_{n_{\ell-1}}) } e_{\ell-1}\\
&\qquad + K(2 \sqrt{e_{\ell-1}}+\delta_{2}) a_{\ell} (t_{n_{\ell}}-t_{n_{\ell-1}}).
}
We combine this estimate  with~(\ref{eq:23567-2}) and get that
\bas{
e_{\ell}\le (1-2c'(t_{n_{\ell}}-t_{n_{\ell-1}}) )\, e_{\ell-1} &+K(2 \sqrt{e_{\ell-1}} +\delta_{2}) a_{\ell}(t_{n_{\ell}}-t_{n_{\ell-1}})\\
&+\mathfrak p\Bigl(2K\kappa_{6}^{2} \tilde C^{2} 
+\frac{4 K}{\rho-\sqrt{\delta_{1}}} \aleph_{\ell}^{2}\sqrt{\gamma_{n_{\ell-1}+1}}\Bigr)(t_{n_{\ell}}-t_{n_{\ell-1}}).
}
\end{proof}

\begin{prop}[\ODE\ based uniform error analysis for the Adam optimizer] 
\label{prop35-2}
Assume 
\cref{item:1_non_uniform}, \cref{item:2_non_uniform}, \cref{item:3_non_uniform}, 
\cref{item:4_non_uniform}, \cref{item:5_non_uniform}, 
\cref{item:6a_non_uniform}, \cref{item:6b_non_uniform}
of \cref{prop35-12}, 
assume that $p>2$ and let again
\bas{
\mathfrak N=\inf\{n\ge n_{0}: \theta_{n}\not\in V\}\wedge \inf\{n_{\ell}: \ell\in\N_{0}, \|\theta_{n_{\ell}}-\Psi_{t_{n_{\ell}}}\|>\mathfrak R_{t_{n_{\ell}}}\}.
}
One has
\bas{
\label{eq:uniform_error_analysis}
\E\bigl[\sup _{n: n_{\ell-1}\le n\le n_{\ell}\wedge\mathfrak N}& \|\theta_{n}-\theta_{n_{\ell-1}}-(\Psi_{t_{n}}-\Psi_{t_{n_{\ell-1}}})\|^{p}\bigr]^{1/p}\\
&\le L ( t_{n_{\ell}}-t_{n_{\ell-1}}) \E\bigl[\1_{\{\mathfrak N\ge n_{\ell-1}\}} \|\theta_{n_{\ell-1}}-\Psi_{t_{n_{\ell-1}}}\|^{p} \bigr]^{1/p}\\
&\qquad 
+(1-2^{-(\frac12-\frac 1p)})^{-1} K^{5/4} \kappa_{6} \tilde C \gamma_{n_{\ell}+1}^{3/4} +K(\aleph_{\ell} + \sfrac12 LC' \rho^{2}) \gamma_{n_{\ell}+1} ,
}
where $K=(1-\zeta\rho\sqrt{\gamma_{n_{0}+1}})^{-1}$ and
\bas{
\aleph_{\ell}=\kappa_{3}\rho^{2}\tilde L  +(\kappa_{2}+\kappa_{4}) C+(\kappa_{1}+\kappa_{5})\beta^{(n_{\ell-1}-n_{0})/2}\|\bx\|_{\ell^{d}_{\varrho}}.
}
\end{prop}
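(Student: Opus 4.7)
The strategy parallels the proof of \cref{prop35-12} and uses the same pathwise decomposition, but replaces the pointwise martingale bound (IV.a) of \cref{prop:234} by its uniform counterpart (IV.b). Fix $\ell\in\N$ and, on the event $\{\mathfrak N\ge n_{\ell-1}\}$ and for $n\in\{n_{\ell-1},\dots,n_\ell\wedge\mathfrak N\}$, write $\theta_n-\theta_{n_{\ell-1}}-(\Psi_{t_n}-\Psi_{t_{n_{\ell-1}}})=A_n+M_n+B_n$, where $A_n:=(\theta_n-\theta_{n_{\ell-1}})-(\dtilde\Theta_n-\dtilde\Theta_{n_{\ell-1}})$ is the Adam--approximation discrepancy, $M_n:=(\dtilde\Theta_n-\dtilde\Theta_{n_{\ell-1}})-(t_n-t_{n_{\ell-1}})f(\theta_{n_{\ell-1}})$ is the conditionally centred noise, and $B_n:=(t_n-t_{n_{\ell-1}})f(\theta_{n_{\ell-1}})-(\Psi_{t_n}-\Psi_{t_{n_{\ell-1}}})$ is the ODE comparison. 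Taking $\sup_n$ and applying Minkowski's inequality in $L^p$ reduces the proof to estimating the three summands separately, matching them to the three terms on the right-hand side of \cref{eq:uniform_error_analysis}.

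For $B_n$, exactly as in \cref{eq78234}, Lipschitz continuity of $f$ and the bound $\|f\|\le C'$ on $V$ yield pathwise $\|B_n\|\le L(t_n-t_{n_{\ell-1}})\|\theta_{n_{\ell-1}}-\Psi_{t_{n_{\ell-1}}}\|+\tfrac12 LC'(t_n-t_{n_{\ell-1}})^2$. Both summands are monotone in $n$; at $n=n_\ell$ the first produces the $L(t_{n_\ell}-t_{n_{\ell-1}})$-term of the stated bound after taking the $L^p$-norm on $\{\mathfrak N\ge n_{\ell-1}\}$, while the chain $(t_{n_\ell}-t_{n_{\ell-1}})^2\le\rho^2\gamma_{n_{\ell-1}+1}\le K\rho^2\gamma_{n_\ell+1}$ (from the $\rho$-partition and \cref{le:3571}) contributes the $\tfrac12 LC'\rho^2$ piece of the third term.

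For $A_n$, I combine estimates~(I), (II) (with $\mathfrak n=n_{\ell-1}$, $n=n_\ell\wedge\mathfrak N$) and (III) of \cref{prop:234}: the right-hand sides are already absolute sums over increments, hence they bound $\sup_n\|A_n\|$ pathwise and reproduce the computation in \cref{eq:135}, giving $\E[\1_{\{\mathfrak N>n_{\ell-1}\}}\sup_n\|A_n\|^p]^{1/p}\le \aleph_\ell\gamma_{n_{\ell-1}+1}\le K\aleph_\ell\gamma_{n_\ell+1}$, the remaining piece of the third term. For $M_n$ I apply (IV.b) of \cref{prop:234} conditionally on $\cF_{n_{\ell-1}}$ and take expectations, obtaining $\E[\1_{\{\mathfrak N>n_{\ell-1}\}}\sup_n\|M_n\|^p]^{1/p}\le (1-2^{-(\frac12-\frac1p)})^{-1}\kappa_6\tilde C\,\gamma_{n_{\ell-1}+1}\sqrt{n_\ell-n_{\ell-1}}$. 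The bookkeeping conversion $\gamma_{n_{\ell-1}+1}\sqrt{n_\ell-n_{\ell-1}}\le K^{5/4}\gamma_{n_\ell+1}^{3/4}$ (up to an absorbable $\rho$-power) follows from $(n_\ell-n_{\ell-1})\gamma_{n_\ell}\le t_{n_\ell}-t_{n_{\ell-1}}\le\rho\sqrt{\gamma_{n_{\ell-1}+1}}$, the monotonicity $\gamma_{n_\ell}\ge\gamma_{n_\ell+1}$, and $\gamma_{n_{\ell-1}+1}\le K\gamma_{n_\ell+1}$ from \cref{le:3571}, yielding the $\gamma_{n_\ell+1}^{3/4}$-summand of the bound.

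The conceptual content is thus entirely inherited from \cref{prop35-12}; the only genuinely new input is the uniform martingale estimate (IV.b). The main obstacle is the arithmetic bookkeeping of $\gamma$-exponents and $K$-factors, in particular carrying out the conversion $\gamma_{n_{\ell-1}+1}\sqrt{n_\ell-n_{\ell-1}}$ into a clean $\gamma_{n_\ell+1}^{3/4}$-expression and verifying that each intermediate pathwise bound holds uniformly in the step-size index within the current $\rho$-partition window, so that the supremum over $n$ may be absorbed without loss when passing to $L^p$.
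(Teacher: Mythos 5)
Your proposal is correct and follows essentially the same route as the paper: the decomposition into $A_n$, $M_n$, and the ODE comparison term is exactly the paper's $A_{\ell,n}$, $M_{\ell,n}$, $\tilde A_{\ell,n}$ splitting, with estimates (I)--(III) of \cref{prop:234} giving $\aleph_\ell\gamma_{n_{\ell-1}+1}$, estimate (IV.b) giving the uniform martingale bound, and \cref{le:3571} plus the $\rho$-partition handling the $\gamma$- and $K$-bookkeeping. Your parenthetical about the leftover $\rho$-power in the $\gamma_{n_\ell+1}^{3/4}$ term is a fair observation, as the paper's own final combination silently drops the same factor.
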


\begin{proof}
We use the approximations $(\Theta_n),(\tilde\Theta_n)$ and $(\dtilde \Theta_n)$ as introduced in Section~\ref{sec:approx_intro} and write for $\ell\in\N$ and $n\in\{n_{\ell-1},\dots,n_{\ell}\}$
\begin{align}\label{eq9823-1}
\Psi_{t_{n}}=\Psi_{t_{n_{\ell-1}}}+(t_{n}-t_{n_{\ell-1}})f(\Psi_{t_{n_{\ell-1}}})+\tilde A_{\ell,n}
\end{align}
and 
\begin{align}\label{eq9823-2}
\theta_{n}=\theta_{n_{\ell-1}}+ \underbrace{\theta_{n}-\theta_{n_{\ell-1}}-\dtilde\Theta_{n_{\ell-1}:n}}_{=:A_{\ell,n}} +\underbrace{\dtilde\Theta_{n_{\ell-1}:n_{\ell}}- (t_{n}-t_{n_{\ell-1}}) f(\theta_{n_{\ell-1}})}_{=:M_{\ell,n}}+(t_{n}-t_{n_{\ell-1}})f(\theta_{n_{\ell-1}}).
\end{align}
In complete analogy to the proof of~(\ref{eq:135}), we get that
\bas{
\E\bigl[\max_{n=n_{\ell-1},\dots,n_{\ell}} \1_{\{\mathfrak N\ge n \}}\|A_{\ell,n}\|^{p}\bigr]^{1/p}&\le \E\Bigl[ \Bigl(\sum_{k=n_{\ell-1}+1}^{n_{\ell}}\|\Delta \theta_{k}-\Delta \dtilde \Theta_{k}\|\Bigr)^{p}\Bigr]^{1/p}\\
&\le \aleph_{\ell} \gamma_{n_{\ell-1}+1} 
}
with the identical $\aleph_{\ell}$. Moreover, Propositon~\ref{prop:234} (IV.b) yields that
\bas{
\E\bigl[\1_{\{\mathfrak N>n_{\ell-1}\}}\max_{n=n_{\ell-1},\dots,n_{\ell}} \|M_{\ell,n}\|^{p}\bigr]^{1/p}\le (1-2^{-(\frac12-\frac 1p)})^{-1}  \kappa_{6} \tilde C \gamma_{n_{\ell-1}+1} \sqrt {n_{\ell}-n_{\ell-1}}.
}
With Lemma~\ref{le:3571} we get that $\gamma_{n_{\ell-1}+1}\le K \gamma_{n_{\ell}+1}$ and
$n_{\ell}-n_{\ell-1}\le  (t_{n_{\ell}}-t_{n_{\ell-1}})/\gamma_{n_{\ell}+1}$ which entails that
\bas{
\E\bigl[\1_{\{\mathfrak N>n_{\ell-1}\}}\max_{n=n_{\ell-1},\dots,n_{\ell}} \|M_{\ell,n}\|^{p}\bigr]^{1/p}  \le (1-2^{-(\frac12-\frac 1p)})^{-1} K \kappa_{6} \tilde C \sqrt {\gamma_{n_{\ell}+1} (t_{n_{\ell}}-t_{n_{\ell}})}.
}
Moreover,
\bas{
\1_{\{\mathfrak N>n_{\ell-1}\}} \max_{n=n_{\ell-1},\dots,n_{\ell}}\bigl| (t_{n}-t_{n_{\ell-1}})
(f(\theta_{n_{\ell-1}})-f(\Psi_{t_{n_{\ell-1}}}))\bigr| \le ( t_{n_{\ell}}-t_{n_{\ell-1}}) L \|\theta_{n_{\ell-1}}-\Psi_{t_{n_{\ell-1}}}\| 
}
and $\1_{\{\mathfrak N>n_{\ell-1}\}} \max _{n=n_{\ell-1},\dots,n_{\ell}}\|\tilde A_{\ell,n}\|\le \frac12 LC'( t_{n_{\ell}}-t_{n_{\ell-1}})^{2}$ as in~(\ref{eq78234}). Combining these estimates with the representations~(\ref{eq9823-1}) and~(\ref{eq9823-2}) we get that
\bas{
\E\Bigl[\sup _{n: n_{\ell-1}\le n\le n_{\ell}\wedge\mathfrak N}& \|\theta_{n}-\theta_{n_{\ell-1}}-(\Psi_{t_{n}}-\Psi_{t_{n_{\ell-1}}})\|^{p}\Bigr]^{1/p}\le L ( t_{n_{\ell}}-t_{n_{\ell-1}}) \E\bigl[\1_{\{\mathfrak N\ge n_{\ell-1}\}} \|\theta_{n_{\ell-1}}-\Psi_{t_{n_{\ell-1}}}\|^{p} \bigr]^{1/p}\\
&\qquad+\aleph_{\ell} \gamma_{n_{\ell-1}+1} + (1-2^{-(\frac12-\frac 1p)})^{-1} K \kappa_{6} \tilde C \sqrt {\gamma_{n_{\ell}+1} (t_{n_{\ell}}-t_{n_{\ell-1}})}\\
&\qquad+ \sfrac12 LC'( t_{n_{\ell}}-t_{n_{\ell-1}})^{2}\\
&\le L ( t_{n_{\ell}}-t_{n_{\ell-1}}) \E\bigl[\1_{\{\mathfrak N\ge n_{\ell-1}\}} \|\theta_{n_{\ell-1}}-\Psi_{t_{n_{\ell-1}}}\|^{p} \bigr]^{1/p}\\
&\qquad 
+(1-2^{-(\frac12-\frac 1p)})^{-1} K^{5/4} \kappa_{6} \tilde C \gamma_{n_{\ell}+1}^{3/4} +K(\aleph_{\ell} + \sfrac12 LC' \rho^{2}) \gamma_{n_{\ell}+1} .
}
\end{proof}

\section{Proof of the main result of this article (Theorem~\ref{thm-2} in \cref{sec:main_results})}
\label{sec:proof_main_result}

In the proof of Theorem~\ref{thm-2} we proceed as follows. First we verify that Prop.~\ref{prop35-12} is applicable for sufficiently large $\mathfrak n$ and appropriate constants (steps 1.-3.). In the next step, the recursive estimate is used to deduce an error estimate at the times of a $\rho$-partition, see~(\ref{eq2641}) below. In step 5.\ this estimate is extended to all time instances in order to prove statement one of the theorem. Finally, the estimate of step 4 is used to deduce the  second statement of the theorem.

\begin{proof}[Proof of Theorem~\ref{thm-2}] {\red We provide a proof under the additional assumption that for every $x\in\R^d$ one has $\|x\|\le |x|$. The general statement then can be easily obtained by applying the result with norm $\|\cdot\|^*=\iota \, \|\cdot\|$ with $\iota\in(0,\infty)$ sufficiently small in place of $\|\cdot\|$.}

1.) \emph{Choice of parameters for the application of Prop.~\ref{prop35-12}.}
{\red We denote by $\kappa_{0}\in(0,\infty)$ the condition number of the canonical embedding of $(\R^d,|\cdot|)$ into $(\R^d,\|\cdot\|)$. 
}

We pick $\zeta\in (2c_{2},\infty)$, $\rho,c',\delta,\delta_{1},\delta_{2}\in(0,\infty)$ 
 and  $\mathfrak n\in\N_{0}$  so that the following inequalities hold:
\begin{enumerate}[label=(\roman*)]
\item  $\rho\sqrt{\gamma_{\mathfrak n+1}}<\zeta^{-1} \wedge 1$, $\gamma_{\mathfrak n+1}\le \delta_{1}<\rho^{2}$,
\item $\frac{\gamma_{n}-\gamma_{n+1}}{\gamma_{n}^{2}}\le \zeta$ for all $n\ge \mathfrak n+1$,
\item $2c_{1}-({\red \kappa_0}\cK \|\varrho\|_{\ell_{1}})^{2}\rho \sqrt{\gamma_{\mathfrak n+1}}-\zeta K\ge 2c' > 2 c$, for $K:=(1-\zeta \rho\sqrt{\gamma_{\mathfrak n+1}})^{-1}$,
\item $\displaystyle{\sfrac\rho{\rho-\sqrt{\delta_{1}}}\bigl((\kappa_{3}\rho^{2}\tilde L+\sfrac12LC'\rho^{2}+(\kappa_{2}+\kappa_{4})C)\sqrt{\gamma_{\mathfrak n+1}}+(\kappa_1+\kappa_{5})2\cK \bigr) \le \delta_{2}}$,
\item $\sfrac 12 \gamma_{\mathfrak n+1}^{-1} \log \beta^{-1}-2c\ge \delta \gamma_{\mathfrak n+1}^{-1}$,
\item $e^{c\rho \sqrt{\gamma_{\mathfrak n+1}}}\le 1+\sqrt{\gamma_{\mathfrak n+1}}$ and $\sqrt K\le 1+\eps$.
\end{enumerate}
Indeed, this is easily established: pick $\delta_{1},\rho\in(0,\infty)$ with $\delta_{1}<\rho^{2}$ and $c\rho<1$. Since $2c_{1}-2c_{2}>2c$ we can pick $\zeta>2c_{2}$ and $c'>c$ with
\bas
{
2c_{1}-\zeta > 2c'>2c.
}
Noting that $({\red \kappa_0}\cK \|\varrho\|_{\ell_{1}})^2 \rho \sqrt{\gamma_{\mathfrak n+1}}+\zeta(K-1)$ tends to zero as $\mathfrak n$ tends to infinity, we conclude that (c) holds for all but finitely many  $\mathfrak n\in\N_{0}$. The same is true for the inequalities in (a) and (b) as consequence of~(\ref{eq87236455}) and the fact that $\zeta>2c_{2}$. We pick $\delta_{2}=\sfrac\rho{\rho-\sqrt{\delta_{1}}}(\kappa_{1}+\kappa_{5})2\cK+1$ and note that since $\gamma_{\mathfrak n+1}$ tends to zero (d) holds for all but finitely many $\mathfrak n$. Pick $\delta\in(0,\frac12 \log\beta^{-1})$ and observe that (e) is satisfied for all but finitely many $\mathfrak n$. Analogously, the estimates in (f) hold  for all but finitely many $\mathfrak n$ since $(\gamma_{n})$ is a zero sequence and $K$ converges to one when letting  $\mathfrak n$ go to infinity.
Thus we have shown that (a), (b), (c), (d), (e) and (f) hold for an appropriately fixed $\mathfrak n\in\N_{0}$.

\emph{2. Regularity of the innovation.}
We verify that an innovation $(X,U)$ as in {\it 1.} of the theorem is $p$-regular with  parameter $(C,\tilde C,\tilde L):=(\cK,2\cK,\cK)$ and, on $V$, its Adam vector field $f$ is Lipschitz continuous with parameter $L:={\red \kappa_0}\cK \|\varrho\|_{\ell_{1}}$ and uniformly bounded by $C':=d\frac {1-\alpha}{\sqrt {1-\beta}}\frac1{\sqrt{1-\alpha^{2}/\beta}}$.
Indeed, by assumption, one has for $\theta,\theta'\in V$
\bas{ 
\E[|X(U,\theta)|^{2}]^{1/2}&\le \E[|X(U,\theta)|^{p}]^{1/p}\le \cK\\
 \E[|X(U,\theta)-\E[X(U,\theta)]|^{p}]^{1/p}&\le 2\E[|X(U,\theta)|^{p}]^{1/p}\le 2\cK\\
\E[|X(U,\theta)-X(U,\theta')]|^{p}]^{1/p} &\le \cK |\theta-\theta'|.
}
Moreover, using the Lipschitz continuity of $g$ (Lemma~\ref{le:23456}) we get that
\bas{
|f(\theta)-f(\theta')|&= |\E[g(X(U_{0},\theta),X(U_{-1},\theta),\dots)-g(X(U_{0},\theta'),X(U_{-1},\theta'),\dots)]|\\
&\le \E[\|(X(U_{0},\theta)-X(U_{0},\theta'),\dots) \|_{\ell^{d}_{\varrho}}]\\
&= \|\varrho\|_{\ell_{1}}  \E[|(X(U_{0},\theta)-X(U_{0},\theta')|]\le \|\varrho\|_{\ell_{1}} \cK\, |\theta-\theta'|
}
so that by recalling that $\kappa_0$ is the condition number of the canonical embedding of the two Banach spaces we obtain that
\bas{
\red \|f(\theta)-f(\theta')\|\le \kappa_0 \|\varrho\|_{\ell_{1}} \cK\, \|\theta-\theta'\|
}
The uniform bound of $g$ is proved in Lemma~\ref{le:23456}. Hence, property (1) of Proposition~\ref{prop35-12} is satisfied for $C,\tilde C,C',L,\tilde L$ as above.

\emph{3. The Adam algorithm in view of Prop.~\ref{prop35-12}.} Let $n_{0}\in\N_{0}\cap[\mathfrak n,\infty)$ and $(\theta_{n})_{n\ge n_{0}}$ an Adam algorithm as in {\it 2.} of the theorem. We pick $\bx\in\ell_{\varrho}^{d}$ satisfying~(\ref{eq34284}) with $\|\bx\|_{\ell_{\varrho}^{d}}\le 2 \|m_{n_{0}},v_{n_{0}}\|_{\ell_{\varrho}^{d}}$ so that, in particular, $\sqrt{\gamma_{n_{0}+1}}\|\bx\|_{\ell_{\varrho}^{d}}\le 2\cK$. 
Note that {\it 2.} and {\it 3.} of the theorem are identical with (3) and (4) of Prop.~\ref{prop35-12}. 
We set $K_{n_{0}}=(1-\zeta \rho\sqrt{\gamma_{n_{0}+1}})^{-1}$.
Recalling that $n_{0}\ge\mathfrak n$, $(\gamma_{n})$ is decreasing, $f$ is $L$-Lipschitz and $K_{n_{0}}\le K$ we get validity of (6.a)-(6.d) of the proposition (with $K_{n_{0}}$ in place of $K$). 

We are in the position to apply  Prop.~\ref{prop35-12}.  Let $(n_{\ell})_{\ell\in\N_{0}}$ be a $\rho$-partition  starting in $n_{0}$,
\bas{
\mathfrak N=\inf\{n\ge n_{0}: \theta_{n}\not\in V\}\wedge \inf\{n_{\ell}: \ell\in\N_{0}, \|\theta_{n_{\ell}}-\Psi_{t_{n_{\ell}}}\|>\mathfrak R_{t_{n_{\ell}}}\}
}
and, for $\ell\in\N_{0}$,
\bas{
e_{\ell}=\gamma_{n_{\ell}+1}^{-1}\,\E[\1_{\{\mathfrak N\ge n_{\ell}\}} \|\theta_{n}-\Psi_{t_{n}}\|^{p}]^{2/p}.
}
Then the proposition implies that one has, for all $\ell\in\N$, 
\bas{\label{eq24811-3}
e_{\ell}\le (1-2c'(t_{n_{\ell}}-t_{n_{\ell-1}}) )\, e_{\ell-1} +\bigl(K(2 \sqrt{e_{\ell-1}} +\delta_{2}) a_{\ell}+b_{\ell}\bigr)(t_{n_{\ell}}-t_{n_{\ell-1}}),
}
where $(a_{\ell})$, $(b_{\ell})$ and $(\aleph_{\ell})$ are as in~(\ref{eq24811-1}) and~(\ref{eq24811-2}).

\emph{4. Analysis of the recursive estimate~(\ref{eq24811-3}).}
In the following, we denote by $\eta_{1},\dots$ constants that may depend on $d$, $\alpha$, $\beta$, $p$, $(\gamma_{n})$, $\cK$, $c$, $c_{1}$, $c_{2}$ and the choices that we made for   $\mathfrak n$, $c'$, $\rho$, $\delta$,$\delta_1$ and $\delta_{2}$  in part one, but not on further specific details of the underlying algorithm or the ODE. 
By definition of $(a_{\ell})$ and  $(b_{\ell})$ and the bound $\|\bx\|_{\ell_{\varrho}^{d}}\le 2\cK \gamma_{n_{0}+1}^{-1/2}$ there exists  a finite constant $\eta_{1}$ such that, for every $\ell\in\N$,
\bas{
|a_{\ell}|\vee |b_{\ell}|\le \eta_{1}(1+ \beta^{(n_{\ell-1}-n_{0})/2}\|\bx\|_{\ell_{\varrho}^{d}}).
}
Together with~(\ref{eq24811-3}) we thus get that for an appropriate constant $\eta_{2}$ one has
\bas{\label{eq24811-4}
e_{\ell}\le (1-2c'(t_{n_{\ell}}-t_{n_{\ell-1}}) )\, e_{\ell-1} &+\eta_{2} (\sqrt {e_{\ell-1}}+1)
(1+\beta^{(n_{\ell-1}-n_{0})/2}\|\bx\|_{\ell^{d}_{\varrho}})(t_{n_{\ell}}-t_{n_{\ell-1}}).
}
Clearly,  $\eta_{3}:= \sup_{z\in[0,\infty)} (\eta_{2}(\sqrt z +1)-(2c'-2c)z)$ is finite and using that $\eta_{2} (\sqrt {e_{\ell-1}}+1)\le (2c'-2c)e_{\ell-1}+\eta_{3}$ we get that
\bas{
e_{\ell}\le (1-2c(t_{n_{\ell}}-t_{n_{\ell-1}}) )\, e_{\ell-1} +\eta_{3}(t_{n_{\ell}}-t_{n_{\ell-1}})+ \eta_{2}(\sqrt {e_{\ell-1}}+1)
\beta^{(n_{\ell-1}-n_{0})/2}\|\bx\|_{\ell^{d}_{\varrho}} (t_{n_{\ell}}-t_{n_{\ell-1}})
}
and
\bas{
e_{\ell}-\sfrac{\eta_3}{2c}\le (1-2c(t_{n_{\ell}}-t_{n_{\ell-1}}) )\, (e_{\ell-1}-\sfrac{\eta_3}{2c})+ \eta_{2}(\sqrt {e_{\ell-1}}+1)
\beta^{(n_{\ell-1}-n_{0})/2}\|\bx\|_{\ell^{d}_{\varrho}} (t_{n_{\ell}}-t_{n_{\ell-1}}).
}
This gives
\begin{align}\begin{split}\label{eq87235}
e_{\ell}-\sfrac{\eta_3}{2c}&\le (e_{0}-\sfrac {\eta_{3}}{2c})_{+}e^{-2c(t_{n_{\ell}}-t_{n_{0}})}\\
&\qquad + \sum_{r=1}^{\ell}
e^{-2c(t_{n_\ell}-t_{n_r})} (\sqrt {e_{r-1}}+1)
\beta^{(n_{r-1}-n_{0})/2}\|\bx\|_{\ell^{d}_{\varrho}} (t_{n_{r}}-t_{n_{r-1}})
\end{split}\end{align}
It remains to provide an estimate for the latter sum. First note that
\bas{
n_{r-1}-n_{0}\ge \frac{t_{n_{r-1}}-t_{n_{0}}}{\gamma_{n_{0}+1}}
}
and as consequence of (e) above we get  that 
\bas{
\frac1{2\gamma_{n_{0}+1}}\log \beta^{-1}-2c\ge \delta \gamma_{n_{0}+1}^{-1}>0.
}
Consequently, we have
\bas{\label{eq983456}
\sum_{r=1}^{\ell}&
e^{-2c(t_{n_\ell}-t_{n_r})}
\beta^{(n_{r-1}-n_{0})/2}
(t_{n_{r}}-t_{n_{r-1}})\\
&\le e^{2c\rho \sqrt{\gamma_{n_{0}+1}}}\sum_{r=1}^{\ell}
\underbrace{e^{-2c(t_{n_\ell}-t_{n_{r-1}})-\frac 12\log\beta^{-1}(n_{r-1}-n_{0})}}_{\le e^{-2c(t_{n_\ell}-t_{n_{0}})}e^{-\delta\gamma_{n_{0}-1}^{-1}(t_{n_{r-1}}-t_{n_{0}})}}
(t_{n_{r}}-t_{n_{r-1}})\\
&\le e^{-2c(t_{n_\ell}-t_{n_{0}}-\rho\sqrt{\gamma_{n_{0}+1}})} \sum_{r=1}^{\ell}e^{-\delta\gamma_{n_{0}+1}^{-1}(t_{n_{r-1}}-t_{n_{0}})}(t_{n_{r}}-t_{n_{r-1}})\\
&\le e^{-2c(t_{n_\ell}-t_{n_{0}}-\rho\sqrt{\gamma_{n_{0}+1}})} \Bigl(
\rho\sqrt{\gamma_{n_{0}+1}}+\int_0^{\infty} e^{-\delta\gamma_{n_{0}+1}^{-1}s}\,\dd s\Bigr)\\
&\le \eta_{4} \sqrt{\gamma_{n_{0}+1}}e^{-2c(t_{n_\ell}-t_{n_{0}})} 
}
for the constant 
$\eta_{4}=e^{2c\rho\sqrt{\gamma_{\mathfrak n+1}}}  \bigl(
\rho+\delta^{-1} \sqrt{\gamma_{\mathfrak n+1}}\bigr).
$ 
Together with~(\ref{eq87235}) we get  an a priori bound for $\mathcal M_{L}:=\sup_{\ell=0,\dots,L}e_{\ell}$ ($L\in\N_{0}$): one has for every $\ell=0,\dots,L$ that 
\bas{
e_{\ell}-\sfrac{\eta_3}{2c}\le (e_{0}-\sfrac{\eta_{3}}{2c})_{+}+(\sqrt {\mathcal M_{L}} +1) \|\bx\|_{\rho_{\ell}^d} \underbrace{\sum_{r=1}^{\ell}
e^{-2c(t_{n_\ell}-t_{n_r})}
\beta^{(n_{r-1}-n_{0})/2}
(t_{n_{r}}-t_{n_{r-1}})}_{\le \eta_{4}\sqrt{\gamma_{n_{0}+1}}}
}
so that
\bas{
\mathcal M_{L}-\underbrace{\eta_4 \sqrt {\gamma_{n_{0}+1}} \|\bx\|_{\ell_{\varrho}^d} }_{=:a}(\sqrt {\mathcal M_{L}} +1)-\bigl(\underbrace{e_{0}\vee\sfrac{\eta_{3}}{2c}}_{=:b}\bigr)\le0.
}
Hence, $\sqrt {\mathcal M_{L}}$ is smaller or equal to the right zero of the parabola $z\mapsto z^{2}-a(z+1)-b$. This gives that
\bas{
\mathcal M_{L}\le \Bigl(\frac a2+\sqrt{\frac {a^{2}}4+a+b}\Bigr)^{2}\le \Bigl(\frac a2+\sqrt{(\sfrac12 a+1)^{2}+b}\Bigr)^{2}\le (a+1+\sqrt b)^{2}.
}
By assuming that  $\mathfrak n$ is sufficiently large (otherwise we may enlarge it appropriately)  we can  guarantee that $\eta_3/(2c)\le \cK^{2}\gamma_{\mathfrak n+1}^{-1}$. Additionally, by assumption~(\ref{eq87345}),   $e_{0}\le \cK^{2} \gamma_{n_{0}+1}^{-1}$ so that $\sqrt b\le \cK\gamma_{n_{0}+1}^{-1/2}$. Together with 
 $a\le \eta_{4}\cK $ we get that
$
\sqrt {\mathcal M_{L}}+1\le \mathcal K\gamma_{n_{0}+1}^{-1/2}+ \eta_{4}\cK+2. 
$ Assuming that $\mathfrak n$ is sufficiently large that $\eta_4 \cK+2\le \cK \gamma_{\mathfrak n+1}^{-1/2}$ we obtain that
\bas{
\sqrt {\mathcal M_{L}}+1\le 2\mathcal K\gamma_{n_{0}+1}^{-1/2}.
}
Note that the latter estimate does not depend on the choice of $L$ so that it remains valid when replacing $\mathcal M_{L}$ by $\mathcal M=\sup_{\ell\in\N_{0}}
e_{\ell}$.

Insertion of this a priori bound into~(\ref{eq87235}) and using again~(\ref{eq983456}) gives that
\bas{
e_{\ell}-\sfrac{\eta_3}{2c}&\le (e_{0}-\sfrac{\eta_{3}}{2c})_{+}e^{-2c(t_{n_{\ell}}-t_{n_{0}})}\\
&\qquad +2\cK \gamma_{n_{0}+1}^{-1/2} \|\bx\|_{\ell^{d}_{\varrho}}  \sum_{r=1}^{\ell}
e^{-2c(t_{n_\ell}-t_{n_r})} 
\beta^{(n_{r-1}-n_{0})/2}(t_{n_{r}}-t_{n_{r-1}})\\
&\le  (e_{0}-\sfrac{\eta_{3}}{2c})_{+}e^{-2c(t_{n_{\ell}}-t_{n_{0}})}+ 2\cK\eta_{4} \|\bx\|_{\ell^{d}_{\varrho}}  e^{-2c(t_{n_{\ell}}-t_{n_{0}})}.
}
Consequently,
\begin{align}\begin{split}\label{eq2641}
\E[&\1_{\{\mathfrak N\ge n_{\ell}\}} \|\theta_{n_{\ell}}-\Psi_{t_{n_{\ell}}}\|^{p}]^{1/p}=\sqrt{\gamma_{n_{\ell}+1}e_{\ell}}\\
&\le \Bigl(\sqrt{\sfrac{\eta_{3}}{2c}} + e^{-c(t_{n_{\ell}}-t_{n_{0}})} \Bigl(\frac1{\sqrt{\gamma_{n_{0}+1}}}\E[\1_{\{\mathfrak N> n_0\}}\|\theta_{n_{0}}-\Psi_{t_{n_{0}}}\|^{p}]^{1/p}+\sqrt{2\cK\eta_{4}} \sqrt{\|\mathbf x\|_{\ell_{\varrho}^{d}}}  
\Bigr)\Bigr)\sqrt{\gamma_{n_{\ell}+1}}\\
&=:\bar e_{\ell}
\end{split}\end{align}

\emph{5. Proof of the first statement.}
To extend this estimate on the set of all time instances first observe that as consequence of property (f) one has
\bas{
e^{-c(t_{n_{\ell-1}}-t_{n_{0}})}  \le e^{-c(t_{n_{\ell}}-t_{n_{0}})}  e^{c\rho\sqrt{\gamma_{n_{\ell-1}+1}}}\le e^{-c(t_{n_{\ell}}-t_{n_{0}})}  (1+ \sqrt{\gamma_{n_{0}+1}})
}
and
\bas{
\sqrt{\gamma_{n_{\ell-1}+1}}\le \sqrt K \sqrt{\gamma_{n_{\ell}+1}}\le (1+\eps)\sqrt{\gamma_{n_{\ell}+1}}.
}
This entails that 
\bas{
\label{eq9872356}
\bar e_{\ell-1} 
&
  \le (1+\eps)\Bigl(\sqrt{\sfrac{\eta_{3}}{2c}} + e^{-c(t_{n_{\ell}}-t_{n_{0}})}(1+\sqrt {\gamma_{n_{0}+1}}) \Bigl(\frac1{\sqrt{\gamma_{n_{0}+1}}}\E[\1_{\{\mathfrak N> n_0\}}\|\theta_{n_{0}}-\Psi_{t_{n_{0}}}\|^{p}]^{1/p}
\\ &
\quad 
  + \sqrt{2\cK\eta_{4}} \sqrt{\|\mathbf x\|_{\ell_{\varrho}^{d}}}  
\Bigr)\Bigr)\sqrt{\gamma_{n_{\ell}+1}}
\\
&
  \le (1+\eps)\bar e_{\ell}+ \eta_{5} \sqrt{\gamma_{n_{\ell}+1}},
}
where $\eta_{5}$ is an appropriate constant.
By Proposition~\ref{prop35-2}, there is a constant $\eta_{6}$ such that
\bas{
\E\Bigl[\sup _{n: n_{\ell-1}\le n\le n_{\ell}\wedge\mathfrak N} \|\theta_{n}-\theta_{n_{\ell-1}}-(\Psi_{t_{n}}-\Psi_{t_{n_{\ell-1}}})\|^{p}\Bigr]^{1/p}&\le \eta_{6} \sqrt{\gamma_{n_{\ell}+1}} (\bar e_{\ell-1}+1)\\
&\le \eta_{6} \sqrt{\gamma_{n_{\ell}+1}} ((1+\eps)\bar e_{\ell}+\eta_{5} \sqrt{\gamma_{n_{\ell}+1}}+1)
}
Assuming that $\mathfrak n$ is sufficiently large to guarantee that $\eta_{6}\sqrt{\gamma_{\mathfrak n+1}}\le \eps$  and $\eta_{5}\sqrt{\gamma_{\mathfrak n+1}}\le 1$ we get that
\bas{
\E\Bigl[\sup _{n: n_{\ell-1}\le n\le n_{\ell}\wedge\mathfrak N} \|\theta_{n}-\theta_{n_{\ell-1}}-(\Psi_{t_{n}}-\Psi_{t_{n_{\ell-1}}})\|^{p}\Bigr]^{1/p}&\le \eps (1+\eps)\bar e_{\ell}+2\eta_{6} \sqrt{\gamma_{n_{\ell}+1}}.
}
Combining this estimate
with the triangle inequality and~(\ref{eq9872356}) we conclude that
\bas{
\E\Bigl[\sup_{n_{\ell-1}\le n\le n_{\ell}\wedge \mathfrak N}& \|\theta_{n}-\Psi_{t_{n}}\|^{p}\Bigr]^{1/p}\le  \E[\1_{\{\mathfrak N\ge n_{\ell}\}}  \|\theta_{n_{\ell-1}}-\Psi_{t_{n_{\ell-1}}}\|^{p}]^{1/p}\\
&\qquad\qquad\qquad\qquad + \E\bigl[\sup _{n: n_{\ell-1}\le n\le n_{\ell}\wedge\mathfrak N}  \|\theta_{n}-\theta_{n_{\ell-1}}-(\Psi_{t_{n}}-\Psi_{t_{n_{\ell-1}}})\|^{p}\bigr]^{1/p}\\
&\le (1+\eps)^{2}\bar e_{\ell}+ (\eta_{5}+2\eta_{6}) \sqrt{\gamma_{n_{\ell}+1}}\\
&\le \Bigl(\eta_{7}+e^{-c(t_{n_{\ell}}-t_{n_{0}})}\Bigl((1+\eps)^{2} \frac{|\theta_{n_{0}}-\Psi_{t_{n_{0}}}|}{\sqrt{\gamma_{n_{0}+1}}} +\eta_{7}  \sqrt{(m_{0},v_{0})_{\ell_{\varrho}^{d}}}\Bigr)\Bigr)\sqrt{\gamma_{n_{\ell}+1}},
}
where $\eta_{7}$ is an appropriate constant. This proves the first statement  by noticing that, in particular, for $n\ge n_{0}$ one can choose $\ell\in\N$ with $n\in\{n_{\ell-1},\dots,n_{\ell}\}$ and get that
\bas{
\E\bigl[\1_{\{ \mathfrak N\ge n\}}& \|\theta_{n}-\Psi_{t_{n}}\|^{p}\bigr]^{1/p}\le (1+\eps)^{2}\bar e_{\ell}+ (\eta_{5}+2\eta_{6}) \sqrt{\gamma_{n_{\ell}+1}}\\
&\le \Bigl(\eta_{7}+e^{-c(t_{n_{\ell}}-t_{n_{0}})}\Bigl((1+\eps)^{2} \frac{\|\theta_{n_{0}}-\Psi_{t_{n_{0}}}\|}{\sqrt{\gamma_{n_{0}+1}}} +\eta_{7}  \sqrt{(m_{0},v_{0})_{\ell_{\varrho}^{d}}}\Bigr)\Bigr)\sqrt{\gamma_{n+1}}.
}

\emph{6. Proof of the second statement.} For ease of notation, we write in this part \bas{a:=\eta_{7}\text{ \ and \ }  b:=(1+\eps)^{2}\frac{\|\theta_{n_{0}}-\Psi_{t_{n_{0}}}\| }{\sqrt{\gamma_{n_{0}+1}}}+\eta_{7}(m_{n_{0}},v_{n_{0}})_{\ell_{\varrho}^{d}}.
}
By assumption,  $(R_{t})_{t\ge t_{n_{0}}}$ is  decreasing which implies that 
\bas{
\E\Bigl[&\sum_{\ell=1}^{\infty} R_{t_{n_{\ell-1}}}^{-p} \sup_{n_{\ell-1}\le n\le n_{\ell}\wedge \mathfrak N} \|\theta_{n}-\Psi_{t_{n}}\|^{p} \Bigr]\\
&\le  \sum_{\ell=1}^{\infty} R_{t_{n_{\ell-1}}}^{-p}\bigl(a+b e^{-c(t_{n_{\ell}}-t_{n_{0}})} )^{p}\,\gamma_{n_{\ell}+1}^{p/2}\\
&\le (\rho-\sqrt{\gamma_{\mathfrak n+1}})^{-1}\sum_{\ell=1}^{\infty} R_{t_{n_{\ell-1}}}^{-p}\bigl(a+b e^{-c(t_{n_{\ell}}-t_{n_{0}})} )^{p}\gamma_{n_{\ell}+1}^{(p-1)/2} (t_{n_{\ell}}-t_{n_{\ell-1}})\\
&\le (\rho-\sqrt{\gamma_{\mathfrak n+1}})^{-1}\int_{t_{n_{0}}}^{\infty} R_{s}^{-p}\bigl(a+b e^{-c(s-t_{n_{0}})} )^{p} \,\Gamma_{s}^{(p-1)/2}\,\dd s.
}
By (a), we have that $t_{n_{\ell}}-t_{n_{\ell-1}}\le 1$ so that
\bas {
\E\Bigl[\sup_{n=n_{0},\dots,\mathfrak N} R_{t_{n}}^{-p}  \|\theta_{n}-\Psi_{t_{n}}\|^{p} \Bigr]&\le \cK^{p}\, \E\Bigl[\sum_{\ell=1}^{\infty} R_{t_{n_{\ell-1}}}^{-p} \sup_{n_{\ell-1}\le n\le n_{\ell}\wedge \mathfrak N} \|\theta_{n}-\Psi_{t_{n}}\|^{p} \Bigr]\\
&\le  \cK^{p}\, (\rho-\sqrt{\gamma_{\mathfrak n+1}})^{-1}\int_{t_{n_{0}}}^{\infty} R_{s}^{-p}\bigl(a+b e^{-c(s-t_{n_{0}})} )^{p} \,\Gamma_{s}^{(p-1)/2}\,\dd s
}
which implies the second statement of the theorem.
\end{proof}

\section{Regularity properties and perturbation analysis for the Adam vector field}
\label{sec:perturbation_analysis}

The Adam vector field as introduced in \cref{def:Adamfield0} depends in an intricate way on the innovation $(X,U)$ and the damping parameters $(\alpha,\beta,\epsilon)$.  In this section, we want to investigate the dominant terms for particular choices of parameters.

 In the following, let $(X,U)$ be an innovation and let $(\alpha,\beta,\epsilon)$ be damping parameters with $0\le\alpha<\sqrt \beta<1$. The related Adam vector field  $f \colon \R^d\to\R^d$ is given by
\bas{
f^{(i)}(\theta)&=(1-\alpha) \,\E\Bigl[ 1/\Bigl(\sqrt{(1-\beta) \sum_{k\in-\N_0} \beta^{-k} X^{(i)}(U_k,\theta)^{2}}+\epsilon\Bigr)  \sum_{k\in-\N_0} \alpha^{-k} X^{(i)}(U_k,\theta)\Bigr].
}
Motivated by a Taylor approximation we will introduce a vector field 
$  
  \tilde f 
  = ( \tilde f^{ (1) }, \dots, \tilde f^{ (d) } )
  \colon 
  \allowbreak 
  \R^d\to\R^d
$ 
which we call the \emph{first order approximation} for the Adam field $f$. For this we need some more notation.

We denote by $(U_k)_{k\in-\N_0}$ an i.i.d.\ sequence of copies of $U$ and briefly write, for all $k\in-\N_0$ and $\theta\in\R^d$, $X_k^\theta=X(U_k,\theta)$ and $\bX^\theta=(X_{k}^\theta)_{k\in-\N_0}$. Again we write for $\bx=(x_k)_{k\in-\N_0}\in\ell_\varrho$
\bas{
V(\bx)=(1-\beta)\sum_{k\in-\N_0} \beta^{-k} x_k^2
}
and let 
\bas{\label{def:h2}
h:[0,\infty)\to(0,\epsilon^{-1}], \ x\mapsto \frac1{\sqrt x+\epsilon}.}
In this notation, we have
\bas{
f^{(i)}(\theta)
&=(1-\alpha)\sum_{k\in-\N_0}  \alpha^{-k} \,\E\bigl[ h(V(\bX^{\theta, (i)})) \,  X^{\theta,(i)}_k\bigr].
}
For $k\in-\N_{0}$ and $\mathbf x=(x_r)_{r\in-\N_0}\in \ell_\varrho$, we let 
\bas{
V^{\not=k}(\mathbf x)=(1-\beta)\sum_{r\in-\N_{0}\backslash \{k\}} \beta^{-r} x_r^{2},\text{ \ for all $i=1,\dots,d$,}
}
and we set, for all $\theta\in\R^d$ and $i=1,\dots,d$,
\bas{\label{def:linApp}
&
\tilde f^{(i)}(\theta)
=(1-\alpha)\sum_{k\in-\N_0}  \alpha^{-k} \,\E\Bigl[ \frac1{\sqrt{V^{\not=k}(\bX^{\theta,(i)})}+\epsilon} \Bigr] \,\E[X^{\theta,(i)}_0]\\
& -(1-\alpha)(1-\beta)\sum_{k\in-\N_0}   (\alpha\beta)^{-k} \,\E\Bigl[   \frac1{(\sqrt{V^{\not=k}(\bX^{\theta, (i)})}+\epsilon)^{2}}\frac1{2\sqrt{V^{\not=k}(\bX^{\theta,(i)})}} \Bigr] \E\bigl[(X^{\theta,(i)}_0)^{3}\bigr]
}
provided that the latter two expectations are well-defined and finite. Note that for every $k\in-\N_0$, the random variable $V^{\not=k}(\bX^{\theta,(i)})$ stochastically dominates  $V^{\not=0}(\bX^{\theta,(i)})$. Moreover, the distribution of $V^{\not=0}(\bX^{\theta,(i)})$ agrees with the one of $\beta V(\bX^{\theta,(i)})$. 
Consequently, $\tilde f^{(i)}(\theta)$ is well-defined and finite, if and only if 
\bas{
 X_0^{\theta,(i)}\in L^3\text{ \ and \ } V(\bX^{\theta,(i)})^{-1/2}\in L^1.
}
We call $\tilde f^{(i)}(\theta)$ as in~(\ref{def:linApp}) \emph{first order approximation} for the  Adam vector field with innovation $(X,U)$ and damping parameters $(\alpha,\beta,\epsilon)$ provided that it exists.

%
%

\begin{theorem}
\label{thm:perturbation_analysis}
Let $(X,U)$ be an innovation and $(\alpha,\beta,\epsilon)$ damping parameters with $0\le\alpha<\sqrt \beta<1$. Let $\theta\in\R^d$ and $i=1,\dots,d$ and suppose that
\bas{\label{eq:23876354}
X_0^{\theta,(i)}\in L^5\text{ \ and \ } V(\bX^{\theta,(i)})^{-1/2}\in L^3.}
Then the first order approximation $\tilde f^{(i)}(\theta)$ for the Adam field with innovation $(X,U)$ and damping factors $(\alpha,\beta,\epsilon)$ is well-defined in $\theta$ and $i$ and one has that
\bas{
|f^{(i)}(\theta)-\tilde f^{(i)}(\theta)|
&\le\frac38\frac{(1-\alpha) (1-\beta)^{2}}{\beta^{5/2}(1-\alpha\beta^{2})} \,  \E\Bigl[\frac1{(\sqrt{V(\bX^{\theta,(i)})}+\epsilon)^{2}\,V(\bX^{\theta,(i)})^{3/2}} \Bigr] \,\E^{\theta}[ |X^{\theta,(i)}_0|^5\bigr].
}
\end{theorem}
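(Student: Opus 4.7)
The overall strategy is a second-order Taylor expansion of $h(x) = (\sqrt{x}+\epsilon)^{-1}$ around $V^{\neq k}(\bX^{\theta,(i)})$, one summation index $k$ at a time, coupled with a stationarity argument that transfers the resulting remainder estimate from $V^{\neq k}$ back to $V(\bX^{\theta,(i)})$.

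Concretely, for each $k \in -\N_0$ I would decompose $V(\bX^{\theta,(i)}) = W_k + Y_k$ with $W_k := V^{\neq k}(\bX^{\theta,(i)})$ and $Y_k := (1-\beta)\beta^{-k}(X_k^{\theta,(i)})^2$, noting that $W_k$ is independent of $X_k^{\theta,(i)}$. Writing $h(W_k+Y_k) = h(W_k) + h'(W_k) Y_k + R_k$ and taking expectations against $X^{\theta,(i)}_k$, independence reduces the first two terms exactly to the two sums in the definition of $\tilde f^{(i)}(\theta)$: $\E[h(W_k) X^{\theta,(i)}_k] = \E[h(W_k)]\,\E[X^{\theta,(i)}_0]$, and since $h'(x) = -\tfrac12(\sqrt{x}+\epsilon)^{-2} x^{-1/2}$ together with $Y_k X^{\theta,(i)}_k = (1-\beta)\beta^{-k}(X^{\theta,(i)}_k)^3$, the linear term reproduces the cubic-moment series. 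The problem thus reduces to estimating $(1-\alpha)\sum_{k\in-\N_0}\alpha^{-k}\,\E[R_k X^{\theta,(i)}_k]$.

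For the remainder, a direct computation gives $h''(x) = \tfrac12(\sqrt{x}+\epsilon)^{-3} x^{-1} + \tfrac14(\sqrt x+\epsilon)^{-2} x^{-3/2}$; using the elementary bound $(\sqrt x+\epsilon)^{-1}\le x^{-1/2}$ one obtains $0 \le h''(x) \le \tfrac34\,\phi(x)$ with $\phi(x) := (\sqrt x+\epsilon)^{-2} x^{-3/2}$. Since $h''$ is decreasing, the integral form of the remainder yields $R_k \le \tfrac12 h''(W_k) Y_k^2$, so that taking absolute values, multiplying by $|X^{\theta,(i)}_k|$, and using the independence of $W_k$ and $X^{\theta,(i)}_k$ together with $X^{\theta,(i)}_k \stackrel{d}{=} X_0^{\theta,(i)}$ gives
\begin{align*}
\E[|R_k X^{\theta,(i)}_k|]\le\tfrac{3}{8}(1-\beta)^2\beta^{-2k}\,\E[\phi(W_k)]\,\E[|X_0^{\theta,(i)}|^5].
\end{align*}

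The main obstacle, I expect, will be to convert this $W_k$-estimate into a uniform estimate in terms of $V(\bX^{\theta,(i)})$ with the correct $\beta$-scaling. Here the stationarity of $(X^{\theta,(i)}_r)$ is crucial: $W_k$ dominates pointwise the tail $\bar W_k := (1-\beta)\sum_{r\le k-1}\beta^{-r}(X^{\theta,(i)}_r)^2$, and the shift $s=r-k+1$ shows $\bar W_k \stackrel{d}{=}\beta^{1-k} V(\bX^{\theta,(i)})$. Since $\phi$ is decreasing, $\E[\phi(W_k)] \le \E[\phi(\bar W_k)] = \E[\phi(\beta^{1-k}V(\bX^{\theta,(i)}))]$, and the homogeneity estimate $\phi(\lambda y) \le \lambda^{-5/2}\phi(y)$ for $\lambda\in(0,1]$ (a consequence of $\sqrt{\lambda y}+\epsilon \ge \sqrt\lambda(\sqrt y +\epsilon)$) then upgrades this to $\E[\phi(W_k)] \le \beta^{-5(1-k)/2}\,\E[\phi(V(\bX^{\theta,(i)}))]$. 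Substituting back and collapsing the resulting geometric series $\sum_{k\in-\N_0}\alpha^{-k}\beta^{-2k}\cdot\beta^{-5(1-k)/2}$ (which converges because $\alpha<\sqrt\beta$) produces the claimed bound; well-definedness of $\tilde f^{(i)}(\theta)$ under the stated moment hypotheses follows from the same dominance argument, since $V(\bX^{\theta,(i)})^{-1/2}\in L^1$ is inherited by each $V^{\neq k}(\bX^{\theta,(i)})$.
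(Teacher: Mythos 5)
Your plan reproduces the paper's argument almost verbatim up to the last step: the same decomposition $V=W_k+Y_k$ with $W_k=V^{\not=k}(\bX^{\theta,(i)})$ independent of $X_k^{\theta,(i)}$, the same second-order Taylor expansion of $h$ whose zeroth and first order terms cancel exactly against $\tilde f^{(i)}(\theta)$, the same remainder bound $0\le R_k\le\tfrac12 h''(W_k)Y_k^2$ (using $h''\ge 0$ and $h''$ decreasing), and the same elementary estimate $h''(x)\le\tfrac34(\sqrt x+\epsilon)^{-2}x^{-3/2}$. The gap is in the transfer from $W_k$ back to $V(\bX^{\theta,(i)})$. You bound $W_k$ from below only by the tail $\bar W_k=(1-\beta)\sum_{r\le k-1}\beta^{-r}(X_r^{\theta,(i)})^2\stackrel{d}{=}\beta^{1-k}V(\bX^{\theta,(i)})$, i.e.\ you throw away all the terms with weights $1,\beta,\dots,\beta^{-k-1}$, which are precisely the largest ones. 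After the homogeneity estimate this costs a $k$-dependent factor $\beta^{-5(1-k)/2}$, so your series is
\begin{equation*}
\sum_{k\in-\N_0}\alpha^{-k}\beta^{-2k}\beta^{-5(1-k)/2}
=\beta^{-5/2}\sum_{j\ge 0}\bigl(\alpha\beta^{-1/2}\bigr)^{j}
=\frac{\beta^{-5/2}}{1-\alpha/\sqrt\beta},
\end{equation*}
which converges (since $\alpha<\sqrt\beta$) but yields the denominator $1-\alpha/\sqrt\beta$ instead of the claimed $1-\alpha\beta^{2}$. Since $1-\alpha/\sqrt\beta<1-\alpha\beta^{2}$, your route proves a strictly weaker inequality and does \emph{not} produce the stated constant; the assertion that the collapsed series "produces the claimed bound" is where the argument fails.

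The missing idea is a dominance bound that is uniform in $k$: because the $X_r^{\theta,(i)}$, $r\in-\N_0$, are i.i.d., the law of $W_k$ depends only on the multiset of weights $\{\beta^{j}:j\ge 0,\ j\not=-k\}$, and matching these weights one-to-one against $\{\beta^{j}:j\ge 1\}$ (each matched weight being at least as large) shows that $V^{\not=k}(\bX^{\theta,(i)})$ stochastically dominates $V^{\not=0}(\bX^{\theta,(i)})\stackrel{d}{=}\beta\,V(\bX^{\theta,(i)})$ for every $k$. With $h''$ (or your $\phi$) decreasing this gives $\E[h''(W_k)]\le\E[h''(\beta V(\bX^{\theta,(i)}))]$, so the penalty is a single factor $\beta^{-5/2}$ independent of $k$, the remaining series is $\sum_{j\ge0}(\alpha\beta^{2})^{j}=(1-\alpha\beta^{2})^{-1}$, and the constant $\tfrac38\,(1-\alpha)(1-\beta)^{2}\beta^{-5/2}(1-\alpha\beta^{2})^{-1}$ of the theorem follows; the same uniform dominance also handles the well-definedness of $\tilde f^{(i)}(\theta)$ exactly as you intend.
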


\begin{rem}\label{rem:non-convergence}
Supposing that~\cref{eq:23876354} is satisfied for given $\theta\in\R^d$ and $i\in\{1,\dots,d\}$ we can indeed use the previous theorem to show that $\tilde f^{(i)}(\theta)$ approximates $f^{(i)}(\theta)$ well, when fixing $\alpha$ and $\epsilon$ and letting $\beta\to1$. In this setting, $|f^{(i)}(\theta)-\tilde f^{(i)}(\theta)|$ is of order $(1-\beta)^2$ and the first term on the right-hand side of~(\ref{def:linApp}) converges and the second term is of order $1-\beta$. Thus $\tilde f^{(i)}(\theta)$ consists of the two leading order terms. In particular, the second order term gives insights about the deviation of critical points of the Adam field  from critical points of the vector field $\theta\mapsto \E[X(U,\theta)]$ which is the field related to classical gradient descent.
\end{rem}

\begin{proof}
Note that one has that, for all $x\in[0,\infty)$, 
\bas{
h'(x)=-\frac1{(\sqrt x+\epsilon)^{2}}\frac1{2\sqrt x}\text{ \ and  \ }h''(x)=\frac1{(\sqrt x+\epsilon)^{3}}\frac1{2x}+\frac1{(\sqrt x+\epsilon)^{2}}\frac1{4 x^{3/2}}.
}
Observe that one has
\bas{
(1-\alpha)&\sum_{k\in-\N_0} \alpha^{-k}  \,\E\bigl[\bigl(h(V^{\not=k}(\bX^{\theta,(i)}) ) +h'(V^{\not=k}(\bX^{\theta,(i)})) (1-\beta)\beta^{-k} (X_k^{\theta,(i)})^2\bigr) \,X_{k}^{\theta,(i)}\bigr]\\
&=(1-\alpha)\sum_{k\in-\N_0}  \alpha^{-k} \,\E\Bigl[ \frac1{\sqrt{V^{\not=k}(\bX^{\theta,(i)})}+\epsilon} \Bigr] \,\E[X^{\theta,(i)}_0]\\
&\quad -(1-\alpha)(1-\beta)\sum_{k\in-\N_0}   (\alpha\beta)^{-k} \,\E\Bigl[   \frac1{(\sqrt{V^{\not=k}(\bX^{\theta, (i)})}+\epsilon)^{2}}\frac1{2\sqrt{V^{\not=k}(\bX^{\theta,(i)})}} \Bigr] \E\bigl[(X^{\theta,(i)}_0)^{3}\bigr]
\\
&=\tilde f^{(i)}(\theta),
}
where we used independence of $V^{\not=k}(\bX^{\theta,(i)})$ and $X_k^{\theta,(i)}$ in the previous step.
Using that for all $k\in\-\N_0$, $V(\bX^{\theta,(i)})=V^{\not=k}(\bX^{\theta,(i)})+(1-\beta)\beta^{-k} (X_k^{(i)})^2$, we get that
\bas{
h&(V^{\not=k}(\bX^{\theta,(i)})) +h'(V^{\not=k}(\bX^{\theta,(i)}))(1-\beta) \beta^{-k} (X^{\theta,(i)}_k)^2  \le h(V(\bX^{\theta,(i)})) \\
&\le h(V^{\not=k}(\bX^{\theta,(i)})) +h'(V^{\not=k}(\bX^{\theta,(i)}))(1-\beta) \beta^{-k} (X^{\theta,(i)}_k)^2  \\
&\qquad\qquad + \frac 12 h''(V^{\not=k}(\bX^{\theta,(i)})) (1-\beta)^{2} \beta^{-2k}(X^{\theta,(i)}_k) ^4.
}
Consequently,
\bas{
|f^{(i)}(\theta)-\tilde f^{(i)}(\theta)|&\le (1-\alpha)\sum_{k\in-\N_0}   \alpha^{-k} \,\frac12 (1-\beta)^{2} \beta^{-2k} \,\E\bigl[h''(V^{\not=k}(\bX^{\theta,(i)})) |X^{\theta,(i)}_k|^5\bigr]\\
&=(1-\alpha)\sum_{k\in-\N_0}   \alpha^{-k} \,\frac12 (1-\beta)^{2} \beta^{-2k} \,\E\bigl[h''(V^{\not=k}(\bX^{\theta,(i)}))\bigr] \,\E\bigl[ |X^{\theta,(i)}_0|^5\bigr]
}
Using that $V^{\not =k}(\bX^{\theta,(i)})$ stochastically dominates $\beta \,V(\bX^{\theta,(i)})$, the monotonicity of $h''$ and the fact that
$
h''(x)\le \frac 34 \frac1{(\sqrt x+\epsilon)^{2}x^{3/2}}
$ 
we conclude that
\bas{
|f^{(i)}(\theta)-\tilde f^{(i)}(\theta)|&\le (1-\alpha)\sum_{k\in-\N_0}  \alpha^{-k} \,\frac12 (1-\beta)^{2} \beta^{-2k} \,\E\bigl[h''(\beta V(\bX^{\theta,(i)}))\bigr] \,\E\bigl[ |X^{\theta,(i)}_0|^5\bigr]\\
&\le\frac38\frac{(1-\alpha) (1-\beta)^{2}}{\beta^{5/2}(1-\alpha\beta^{2})} \,  \E\Bigl[\frac1{(\sqrt{V(\bX^{\theta,(i)})}+\epsilon)^{2}\,V(\bX^{\theta,(i)})^{3/2}} \Bigr] \,\E^{\theta}[ |X^{\theta,(i)}_0|^5\bigr].
}
\end{proof}

In the second part of this section we investigate the Adam field when working with mini-batches. The crucial result will be a first order approximation when letting the size of the mini-badges tend to infinity.
For a fixed innovation $(X,U)$ be an innovation, we let  $\IU=(U_r)_{r\in\N}$ be a sequence of independent copies of $U$ and for every $M\in\N$, we call the innovation $(\mathbb X_M,\IU)$ with
\bas{
\mathbb X_M(u)=\frac 1M\sum_{r=1}^M X(u_r,\theta)
}
the \emph{mini-batch innovation of $(X,U)$ of size $M$}.

\begin{theorem}\label{theo:732846}Let $(X,U)$ be an innovation and $(\alpha,\beta,\epsilon)$ be damping parameters with $\alpha<\sqrt\beta<1$. Let $M\in\N$ and $(\mathbb X_M,\IU)$ be the   mini-batch innovation of $(X,U)$ of size $M$. Moreover, let $\theta\in\R^d$ and $i\in\{1,\dots,d\}$ with 
\bas{
\E[X^{(i)}(U,\theta)]=0 
\qquad
\text{and}
\qquad 
\E[|h'(\IV_{M})|]<\infty . 
}
Set for $q\in(0,\infty)$, $\varphi^{(i)}_q=\E[|X^{(i)}(U,\theta)|^q]$.
Then for the corresponding Adam vector field $f_M$, one has that
\bas{
&
|f_M^{(i)}(\theta)| \le \beta^{-3/2} \frac{(1-\alpha)(1-\beta)}{1-\alpha\beta}\varphi_3\, 
\E[|h'( \IV_M^{(i)})|]\,M^{-2} 
\\
& 
  + \beta^{-5/2}\frac{(1-\alpha)(1-\beta)^2}{1-\alpha\beta^2}\E[h''( \IV_M^{(i)})] (2 C_3 \varphi_2^{(i)} \varphi^{(i)}_3 M^{-5/2}+ 5  \varphi_2 ^{(i)}\varphi_3^{(i)} M^{-3}+\varphi_5^{(i)} M^{-4}),
}
where $C_3$ is the constant in the Burkholder-Davis-Gundy inequality for the third moment.
\end{theorem}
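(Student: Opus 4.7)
The plan is to apply \cref{thm:perturbation_analysis} together with the definition of the first order approximation $\tilde f_M^{(i)}$ from \cref{def:linApp} to the mini-batch innovation $(\mathbb X_M,\IU)$, and exploit the zero-mean assumption $\E[X^{(i)}(U,\theta)]=0$ to simplify. The two terms in the statement will arise from bounding $\tilde f_M^{(i)}(\theta)$ itself, and from bounding the perturbation error $|f_M^{(i)}(\theta)-\tilde f_M^{(i)}(\theta)|$, respectively.

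First, because $\E[\mathbb X_M^{(i)}(U,\theta)]=\E[X^{(i)}(U,\theta)]=0$ the first summand in $\tilde f_M^{(i)}$ vanishes identically. For the second summand I would compute $\E[(\mathbb X_M^{(i)}(U,\theta))^3]=M^{-2}\varphi_3^{(i)}$ by expanding $M^{-3}(\sum_{r=1}^M X^{(i)}(U_r,\theta))^3$ and noting that all mixed products containing a solitary factor vanish by independence and mean zero. I would then estimate $|\tilde f_M^{(i)}(\theta)|$ by pulling absolute values inside the sum in \cref{def:linApp}, using the stochastic domination $V^{\neq k}(\bX_M^{\theta,(i)})\succeq \beta V(\bX_M^{\theta,(i)})$ together with the monotonicity of $|h'|$, and the pointwise estimate $|h'(\beta x)|\le \beta^{-3/2}|h'(x)|$ (which follows from $\sqrt{\beta x}+\epsilon\ge \sqrt{\beta}(\sqrt{x}+\epsilon)$). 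Summing the geometric series $\sum_{k\in-\N_0}(\alpha\beta)^{-k}=(1-\alpha\beta)^{-1}$ then yields exactly the first term in the claimed bound.

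Next, to control the perturbation error I would revisit the proof of \cref{thm:perturbation_analysis} and halt one step earlier, keeping $\E[h''(\IV_M^{(i)})]$ in place of the cruder estimate $\tfrac34\E[((\sqrt{\IV_M^{(i)}}+\epsilon)^2(\IV_M^{(i)})^{3/2})^{-1}]$. This yields the intermediate inequality
\bas{
|f_M^{(i)}(\theta)-\tilde f_M^{(i)}(\theta)|\le \frac{(1-\alpha)(1-\beta)^2}{2\beta^{5/2}(1-\alpha\beta^2)}\,\E[h''(\IV_M^{(i)})]\,\E\bigl[|\mathbb X_M^{(i)}(U,\theta)|^5\bigr],
}
so it remains to estimate $\E[|\mathbb X_M^{(i)}(U,\theta)|^5]=M^{-5}\E[|S_M|^5]$ with $S_M:=\sum_{r=1}^M X^{(i)}(U_r,\theta)$, a sum of i.i.d.\ mean-zero random variables.

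For this fifth absolute moment I would write $|S_M|^5=S_M^2\cdot |S_M|^3$ and split $S_M^2=M\varphi_2^{(i)}+(S_M^2-M\varphi_2^{(i)})$. The dominant term $M\varphi_2^{(i)}\,\E[|S_M|^3]$ is handled by the Burkholder--Davis--Gundy inequality together with Jensen's inequality in the form $(\sum a_r)^{3/2}\le M^{1/2}\sum a_r^{3/2}$, giving $\E[|S_M|^3]\le C_3\E[(\sum_r X_r^2)^{3/2}]\le C_3 M^{3/2}\varphi_3^{(i)}$; this is the source of the leading $2 C_3 \varphi_2\varphi_3 M^{-5/2}$ term (the factor $2$ absorbs the $\tfrac12$ from the Taylor remainder in the perturbation step). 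The two remainder terms $5\varphi_2\varphi_3 M^{-3}$ and $\varphi_5 M^{-4}$ would then come from controlling $\E[(S_M^2-M\varphi_2^{(i)})|S_M|^3]$ via a local Taylor expansion $|a+b|^3 = |a|^3+3a|a|b+3|a|b^2+O(|b|^3)$ applied to the splitting $S_M=(S_M-X_r)+X_r$ and a moment expansion of $(\sum_r X_r)^5$, in which, thanks to the mean-zero assumption, only the index-multiplicities $(5)$ and $(3,2)$ survive, producing the $\varphi_5$ and $\varphi_2\varphi_3$ coefficients respectively. The main obstacle will be carrying out this last moment bookkeeping with enough care to obtain exactly the constants $5$ and $1$ in front of the $M^{-3}$ and $M^{-4}$ corrections.
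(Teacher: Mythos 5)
Your plan diverges from the paper's argument at the crucial point, and the divergence is where the gap is. You propose to apply \cref{thm:perturbation_analysis} wholesale to the mini-batch innovation $(\mathbb X_M,\IU)$, so that the perturbation error is controlled by $\E[|\mathbb X_M^{(i)}|^5] = M^{-5}\,\E[|S_M|^5]$, and then to recover the three terms $2C_3\varphi_2\varphi_3 M^{-5/2}+5\varphi_2\varphi_3 M^{-3}+\varphi_5 M^{-4}$ by a moment decomposition of $\E[|S_M|^5]$. The first half of this is fine: with $\E[X^{(i)}(U,\theta)]=0$ the first sum in \cref{def:linApp} vanishes, $\E[(\mathbb X_M^{(i)})^3]=M^{-2}\E[(X^{(i)})^3]$, and combining the stochastic domination $V^{\not=k}\succeq\beta\,\IV_M^{(i)}$, the monotonicity of $|h'|$, the pointwise bound $|h'(\beta x)|\le\beta^{-3/2}|h'(x)|$, and $\sum_{k\in-\N_0}(\alpha\beta)^{-k}=(1-\alpha\beta)^{-1}$ does give exactly the first term of the claim.

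The second half is not carried out and would be genuinely hard to carry out on your route. You split $\E[|S_M|^5]=M\varphi_2^{(i)}\,\E[|S_M|^3]+\E[(S_M^2-M\varphi_2^{(i)})|S_M|^3]$ and handle the first summand via Burkholder--Davis--Gundy and Jensen, which indeed yields $C_3\varphi_2^{(i)}\varphi_3^{(i)}M^{5/2}$ for the leading piece. But the remainder $\E[(S_M^2-M\varphi_2^{(i)})|S_M|^3]$ involves the nonpolynomial factor $|S_M|^3$, so the ``moment expansion of $(\sum_r X_r)^5$'' you invoke does not literally apply; you would need a separate and fairly delicate argument to show that this remainder contributes only $O(\varphi_2^{(i)}\varphi_3^{(i)}M^{2}+\varphi_5^{(i)}M)$ with the stated constants, and there is no reason to expect the factors $5$ and $1$ to emerge from such a computation. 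The paper sidesteps this entirely by working at a finer granularity: it never estimates $\E[|\mathbb X_M^{(i)}|^5]$. Instead, for each time index $k$ and each sample index $r$ within the mini-batch it decomposes $\IV_M^{(i)}=A_{k,r}+(B_{k,r}+C_{k,r})^2$ with $A_{k,r}$ collecting all times $\ne k$, $B_{k,r}$ the remaining samples at time $k$, and $C_{k,r}=\sqrt{(1-\beta)\beta^{-k}}\,M^{-1}X_{k,r}$ a single scaled sample. Applying \cref{le:3465} to this three-variable split and then BDG separately to the moments of $B_{k,r}$ is exactly what produces $\E[|B|^3]\,\E[C^2]\sim C_3\varphi_2\varphi_3 M^{-7/2}$, $\E[B^2]\,\E[|C|^3]\sim\varphi_2\varphi_3 M^{-4}$, and $\E[|C|^5]\sim\varphi_5 M^{-5}$ in each time slot, which after the geometric sums give the three stated terms with the correct constants. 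That three-way isolation of a single sample is the key structural idea missing from your proposal; without it you would at best obtain a bound with $\varphi_5^{(i)}$ (or $(\varphi_2^{(i)})^{5/2}$) in the leading order, not $\varphi_2^{(i)}\varphi_3^{(i)}$.
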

In our proof of \cref{theo:732846} we will use the following lemma.
\begin{lemma}\label{le:3465}
Let $A,B$ and $C$ be  independent real random variables with $A>0$, almost surely, and let $h:[0,\infty)\to[0,\epsilon^{-1})$ be as in~(\ref{def:h2}). 
Provided that $\E[|h'(A)|]<\infty$ one has that
\bas{
&
\bigl|\E\bigl[h(A+(B+ C)^2) C\bigr]-\bigl(\E[h(A+B^2)]\,\E[C] +\E[h'(A)] \bigl(2\E[B]\,\E[ C^2] +\E[ C^3]\bigr)\bigr)\bigr|
\\
& \le 
  \E[ h''(A)] \bigl(2\E[|B|^3]\,\E[C^2]+5\E [B^2] \,\E[|C|^3]+\E[|C|^5]\bigr).
}
In particular, in the case where $\E[B]=\E[C]=0$, one has that
\bas{\label{eq:34572}
\bigl|\E\bigl[h&(A+(B+ C)^2) C\bigr]-\E[h'(A)] \,\E[ C^3]\bigr|\\
&\le \E[ h''(A)] \bigl(2\E[|B|^3]\,\E[C^2]+5\E [B^2] \,\E[|C|^3]+\E[|C|^5]\bigr).
}
\end{lemma}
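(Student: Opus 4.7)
The plan is to apply a second-order Taylor expansion of $h$ around the base point $A+B^2$. Setting $D := 2BC + C^2$ so that $A+(B+C)^2 = (A+B^2) + D$, the Lagrange form of Taylor's theorem yields
\[
h((A+B^2)+D) = h(A+B^2) + h'(A+B^2)\, D + \tfrac12 h''(\xi)\, D^2
\]
for some (random) $\xi$ lying between $A+B^2$ and $A+(B+C)^2$. Two elementary observations drive the whole argument: first, both endpoints dominate $A$ almost surely, so $\xi \ge A$; second, the explicit formulas for $h'$ and $h''$ recorded in the proof of \cref{thm:perturbation_analysis} show that $h'' > 0$ is decreasing on $(0,\infty)$ (and $h'$ is increasing). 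Combining these yields both $h''(\xi) \le h''(A)$ and the key elementary estimate $0 \le h'(A+B^2) - h'(A) = \int_A^{A+B^2} h''(s)\,\dd s \le h''(A)\, B^2$.

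Next, I multiply the Taylor identity by $C$, take expectations, and use the independence of $C$ from $(A,B)$ to factor out $\E[C]$, $\E[C^2]$, and $\E[C^3]$, obtaining
\[
\E\bigl[h(A+(B+C)^2)\, C\bigr] = \E[h(A+B^2)]\E[C] + 2\E[h'(A+B^2) B]\,\E[C^2] + \E[h'(A+B^2)]\,\E[C^3] + R,
\]
with remainder $R := \tfrac12\E[h''(\xi)(2BC+C^2)^2 C]$. The target right-hand side differs from this expression in exactly three places: $\E[h'(A+B^2) B]$ must be replaced by $\E[h'(A)]\E[B]$, the factor multiplying $\E[C^3]$ must be replaced by $\E[h'(A)]$, and the remainder $R$ must be absorbed into the error bound.

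For the first two replacements I use the pointwise bound $|h'(A+B^2) - h'(A)| \le h''(A)\, B^2$ from step one together with the independence of $A$ and $B$, obtaining $|\E[h'(A+B^2) B] - \E[h'(A)]\E[B]| \le \E[h''(A)]\E[|B|^3]$ and $|\E[h'(A+B^2)] - \E[h'(A)]| \le \E[h''(A)]\E[B^2]$; these contribute $2\E[h''(A)]\E[|B|^3]\E[C^2]$ and $\E[h''(A)]\E[B^2]\E[|C|^3]$, respectively, to the error. For $R$, the domination $h''(\xi) \le h''(A)$, independence, and the elementary estimate $(2BC+C^2)^2 \le 8B^2C^2 + 2C^4$ give $|R| \le 4\E[h''(A)]\E[B^2]\E[|C|^3] + \E[h''(A)]\E[|C|^5]$. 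Adding the three contributions produces exactly $\E[h''(A)]\bigl(2\E[|B|^3]\E[C^2] + 5\E[B^2]\E[|C|^3] + \E[|C|^5]\bigr)$, as required. The ``in particular'' statement then follows on setting $\E[B]=\E[C]=0$.

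The main technical point to watch is the Taylor remainder, where $\xi$ depends jointly on $A$, $B$, and $C$ and so cannot be directly decoupled by independence. The deterministic domination $h''(\xi) \le h''(A)$, which relies on the monotonicity of $h''$ combined with the lower bound $\xi \ge A$, is exactly what restores independence and reduces the whole argument to routine moment estimates.
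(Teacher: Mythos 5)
Your proof is correct and follows essentially the same route as the paper: a second-order Taylor expansion of $h$ around $A+B^2$ with increment $2BC+C^2$, the monotonicity of $h''$ (together with $\xi\ge A$) to dominate the remainder by $h''(A)$, the mean-value bound $|h'(A+B^2)-h'(A)|\le h''(A)B^2$, and independence to factor the expectations. The only difference is bookkeeping (you swap $h'(A+B^2)$ for $h'(A)$ after taking expectations and use $(2BC+C^2)^2\le 8B^2C^2+2C^4$, which in fact reproduces the constants $2$, $5$, $1$ cleanly), so no substantive deviation.
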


\begin{proof}
First note that
\bas{
h(A+(B+ C)^2) C&=h(A+B^2+2 B C+ C^2) C\\
&= \bigl(h(A+B^2)+h'(A+B^2)(2 B C+ C^2)+\psi_1\bigr) C,
}
where as consequence of the Taylor formula and the monotonicity of $h''$,  $|\psi_1|\le\frac12 h''(A)(2 B C+C^2)^2$. Moreover,
\bas{
h'(A+B^2)= h'(A) +\psi_2,
}
where $|\psi_2|\le  h''(A) B^2$. Consequently,
\bas{
\bigl|h&(A+(B+ C)^2) C-  \bigl(h(A+B^2)+h'(A)(2 B C+ C^2)\bigr) C\bigr|\\
&=|\psi_1 C+\psi_2(2 BC+ C^2)C |\le h''(A) \bigl(2|B|^3C^2+5 B^2|C|^3+|C|^5\bigr)
}
and by independence of $A$, $B$ and $C$
 \bas{
\bigl|\E\bigl[h&(A+(B+ C)^2) C\bigr]-\E\bigl[  \bigl(h(A+B^2)+h'(A)(2 B C+ C^2)\bigr) C\bigr]\bigr|\\
&\le \E[ h''(A)] \bigl(2\E[|B|^3]\,\E[C^2]+5\E [B^2] \,\E[|C|^3]+\E[|C|^5]\bigr).
}
The statement follows immediately by using the independence of $A$, $B$ and $C$. 
\end{proof}

Using \cref{le:3465} we now present 
the proof of \cref{theo:732846}.

\begin{proof}
For the proof we can assume without loss of generality that $d=1$. 
We denote by $(\IU(k))_{k\in-\N_0}$ a sequence of independent random variables each one being identically distributed as~$\IU$ and we 
 set for $k\in-\N_0$ and $r\in \N$
\bas{
X_{k,r}=X(\IU_r(k),\theta) \text{ \ and \ } \IV_M=(1-\beta) \sum_{k\in-\N_0} \beta^{-k} \Bigl(\frac1M\sum_{r=1}^M X_{k,r}\Bigr)^2.
}
Then 
\bas{\label{eq:23578563-1}
f_M(\theta)&=(1-\alpha)\,\E\Bigl[ h(\IV_M) \sum _{k\in-\N_0}\alpha^{-k} \frac 1M \sum _{r=1}^M X_{k,r}\Bigr] \\
&=(1-\alpha) \sum _{k\in-\N_0}\alpha^{-k}  \sum _{r=1}^M \E\bigl[ h(\mathbb V_M) \sfrac 1M X_{k,r}\bigr].
}
We let
\bas{
A_{k,r}=(1-\beta)\sum_{\ell\in-\N_0\backslash\{k\}} \beta ^{-\ell}\Bigl( \frac1M \sum_{s=1}^M X_{\ell,s}\Bigr)^2, \ B_{k,r}= \sqrt{(1-\beta) \beta^{-k}}   \frac1M \sum_{s\in\{1,\dots,M\}\backslash\{r\}}^M X_{k,s}
}
and
\bas{
C_{k,r}=\sqrt{(1-\beta) \beta^{-k}}   \frac1M  X_{k,r}
}
and observe that
\bas{\label{eq:23578563-2}
\sqrt{(1-\beta) \beta^{-k}}  \,\E\bigl[ h(\mathbb V_M) \sfrac 1M X_{k,r}\bigr]=\E\bigl[h(A_{k,r}+(B_{k,r}+C_{k,r})^2) C_{k,r}\bigr].
}
For every choice of $k$ and $r$, the random variables $A_{k,r}$, $B_{k,r}$ and $C_{k,r}$ are independent and $B_{k,r}$ and $C_{k,r}$ have mean zero. Therefore,
we can apply Lemma~\ref{le:3465}  and get that
\bas{\label{eq782356-2}
\bigl|\E\bigl[&h(A_{k,r}+(B_{k,r}+C_{k,r})^2) C_{k,r}\bigr]- \E\bigl[h'(A_{k,r})\,C_{k,r}^3\bigr]\bigr|\\
&\le \E[ h''(A_{k,r})] \bigl(2\E[|B_{k,r}|^3]\,\E[C_{k,r}^2]+5\E [B_{k,r}^2] \,\E[|C_{k,r}|^3]+\E[|C_{k,r}|^5]\bigr)}
Note that $A_{k,r}$ stochastically dominates $\beta \IV_M$ so that 
\bas{
\E[|h'(A_{k,r})|]\le \E[|h'(\beta \IV_M)|]\text{ \ and \ }
\E[h''(A_{k,r})]\le \E[h''(\IV_M)].
}
Moreover, for every $q\in \{2,3\}$ one has
\bas{
\E[|B_{k,r}|^q]&=((1-\beta)\beta^{-k})^{q/2}\,\E\Bigl[\Bigl|\frac 1M\sum_{s\in\{1,\dots,M\}\backslash\{r\}} X_{0,s}\Bigr|^q\Bigr]\\
&\le ((1-\beta)\beta^{-k})^{q/2}\,\E\Bigl[\Bigl|\frac 1M\sum_{s=1}^M X_{0,s}\Bigr|^q\Bigr]\\
&\le  ((1-\beta)\beta^{-k})^{q/2}\,C_q\,\E\Bigl[\Bigl|\frac 1{M^2}\sum_{s=1}^M X_{0,s}^2\Bigr|^{q/2}\Bigr]\\
&\le  ((1-\beta)\beta^{-k})^{q/2}\,C_q\,\frac 1{M^{q/2}}\E\bigl[| X_{0,1}|^q\bigr],
}
where $C_2=1$ and $C_3$ is the constant appearing in the Burkholder-Davis-Gundy inequality when applied for the moment $3$.
Next, observe that
\bas{
\E[|C_{k,r}|^q]&=((1-\beta)\beta^{-k})^{q/2}\, \frac1{M^q} \E\bigl[| X_{0,1}|^q\bigr].
}
Combining the previous three estimates with~(\ref{eq782356-2}) we get that
\bas{\bigl|\E\bigl[&h(A_{k,r}+(B_{k,r}+C_{k,r})^2) C_{k,r}\bigr]\big|\\
&\le \E[|h'(\beta \IV_M)|]\,  ((1-\beta)\beta^{-k})^{3/2}\,\varphi_3 M^{-3}\\
&\qquad+  \E[h''(\beta \IV_M)] ((1-\beta)\beta^{-k})^{5/2} (2 \kappa_3 \varphi_2 \varphi_3 M^{-7/2}+ 5  \varphi_2 \varphi_3 M^{-4}+\varphi_5 M^{-5}).
}
With (\ref{eq:23578563-1}) and (\ref{eq:23578563-2}) we get that
\bas{
|f_M(\theta)|&\le (1-\alpha) \sum _{k\in-\N_0}\alpha^{-k}\sqrt{(1-\beta)\beta^{-k}}^{-1}  
M \, \E\bigl[ h(\IV_M) \sfrac 1M X_{k,1}\bigr]\\
&\le  (1-\alpha) \sum _{k\in-\N_0}\alpha^{-k} (1-\beta)\beta^{-k} \varphi_3 \,\E[|h'(\beta \IV_M)|]\, M^{-2} \\
&\qquad +  (1-\alpha) \sum _{k\in-\N_0}\alpha^{-k} ((1-\beta)\beta^{-k} )^2  \,\E[h''(\beta \IV_M)] \\
&\qquad\qquad\qquad\qquad(2 C_3 \varphi_2 \varphi_3 M^{-5/2}+ 5  \varphi_2 \varphi_3 M^{-3}+\varphi_5 M^{-4})\\
&\le \beta^{-3/2} \frac{(1-\alpha)(1-\beta)}{1-\alpha\beta}\varphi_3\, \E[|h'( \IV_M)|]\,M^{-2} \\
&\qquad + \beta^{-5/2}\frac{(1-\alpha)(1-\beta)^2}{1-\alpha\beta^2}\E[h''( \IV_M)] (2 C_3 \varphi_2 \varphi_3 M^{-5/2}+ 5  \varphi_2 \varphi_3 M^{-3}+\varphi_5 M^{-4}),
}
where we have used in the las step that for $x\in(0,\infty)$, $|h'(\beta x)|\le \beta^{-3/2} |h'(\beta x)|$ and $|h''(\beta x)|\le \beta^{-5/2} |h''(\beta x)|$.
\end{proof}

We will provide a further lemma which will later allow us to control 
the terms $\E[|h'( \IV_M^{(i)})|]$ and $\E[h''( \IV_M^{(i)})]$ 
appearing in \cref{theo:732846}.

\begin{lemma}\label{le:2341}
Let $\beta,\delta,p,q\in(0,\infty)$ with $q<\beta^p$ and $\beta<1$ and let $\mathcal Z=(Z_{k})_{k\in-\N_0}$ be a sequence of independent real-valued random variables satisfying for every $k\in-\N_0$ that
\bas{\label{eq235}
\P(Z_{k}^{2}< \delta)\le q.
}
One has
  \bas{ \E\Bigl[\frac1{ v(\mathcal Z)^p}\Bigr]\le 
  \Bigl(\frac\beta{1-\beta}\Bigr)^p\frac {1-q}{\beta^p-q} \,\delta^{-p},
} 
where \bas{v:\R^{-\N_0}\backslash \{0\} \to [0,\infty), \ \bx\mapsto  (1-\beta) \sum_{k\in -\N_{0}} \beta^{-k} x_{k}^{2}.}
\end{lemma}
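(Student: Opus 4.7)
The plan is to exploit the fact that only a \emph{single} index $k\in-\N_0$ with $Z_k^2\ge \delta$ is enough to yield a useful lower bound on $v(\mathcal Z)$, and to combine this with the independence hypothesis to convert the bound on $\P(Z_k^2<\delta)$ into geometric tail control. Concretely, for each realisation of $\mathcal Z$ I would define
$$N=\sup\{k\in-\N_0:Z_k^2\ge \delta\}\in-\N_0\cup\{-\infty\}.$$
On the event $\{N=-j\}$ with $j\in\N_0$, one picks out the term with index $-j$ in the series defining $v$, yielding
$$v(\mathcal Z)\ge (1-\beta)\beta^{-(-j)}Z_{-j}^2\ge (1-\beta)\beta^{j}\delta.$$
By independence together with the hypothesis $\P(Z_k^2<\delta)\le q$, one gets $\P(N\le -j)\le q^j$ for every $j\in\N_0$, and since $q<\beta^p<1$ this tail estimate also implies $\P(N=-\infty)=0$, so $(\{N=-j\})_{j\in\N_0}$ partitions $\Omega$ up to a null set.

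Next, I would decompose the expectation along this partition and estimate
$$\E\!\left[\tfrac1{v(\mathcal Z)^p}\right]\le \frac1{((1-\beta)\delta)^p}\sum_{j=0}^{\infty}\beta^{-jp}\,\P(N=-j) = \frac1{((1-\beta)\delta)^p}\,\E[\beta^{-pN}].$$
Setting $r:=\beta^{-p}$ (so $r>1$), one controls the generating sum by Abel summation: with $S_j:=\P(N\le -j)$ one has
$$\sum_{j=0}^{\infty}r^j\P(N=-j)=S_0+(r-1)\sum_{j=1}^{\infty}r^{j-1}S_j\le 1+(r-1)q\sum_{j=0}^{\infty}(rq)^j=\frac{1-q}{1-rq},$$
where the geometric series converges exactly under the assumption $q<\beta^p$. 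Substituting $r=\beta^{-p}$ and multiplying numerator and denominator by $\beta^p$ gives $(1-q)\beta^p/(\beta^p-q)$, and folding this back into the previous display yields the claimed bound $\left(\tfrac{\beta}{1-\beta}\right)^p\tfrac{1-q}{\beta^p-q}\,\delta^{-p}$.

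There is no genuine obstacle here; the only subtle point is that the condition $q<\beta^p$ plays a double role—it guarantees the Abel-summation boundary term $r^{N}S_{N+1}\le (rq)^{N}q$ vanishes as $N\to\infty$, and it makes the resulting geometric series summable. An alternative, essentially equivalent route would be to observe that $\E[\beta^{-pN}]$ is the moment generating function of a random variable dominated stochastically by a geometric$(1-q)$ distribution and invoke the standard closed form; I would prefer the Abel-summation approach since it is self-contained and makes the role of the hypothesis $q<\beta^p$ completely transparent.
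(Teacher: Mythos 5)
Your proof is correct and follows essentially the same route as the paper: both single out the first index $k\in-\N_0$ (closest to $0$) at which $Z_k^2\ge\delta$, bound $v(\mathcal Z)$ from below by the corresponding single term of the series, and then control the resulting geometric-weighted tail using $q<\beta^p$. The paper packages the tail step as a coupling to a geometric variable $T$ and evaluates $\E[\beta^{-pT}]$ in closed form, whereas you work directly with the tail probabilities $\P(N\le -j)\le q^j$ and close via Abel summation; both land on the same intermediate expression $\tfrac{1-q}{1-\beta^{-p}q}$ and hence on the stated bound. One inconsequential slip: in the display $\sum_{j\ge0}\beta^{-jp}\P(N=-j)$ the rewriting as $\E[\beta^{-pN}]$ is off by a sign (with $N=-j$ one has $\beta^{-pN}=\beta^{pj}$, so it should read $\E[\beta^{pN}]$, or $\E[r^{-N}]$ with $r=\beta^{-p}$); since you then work with the explicit sum $\sum_j r^j\P(N=-j)$, the rest of the computation is unaffected.
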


\begin{proof}
As consequence of the assumptions on $\mathcal Z$ one can couple $\mathcal Z$ with a sequence of independent Bernoulli distributed random variables $(I_k)_{k\in\N_0}$ with success probability $1-q$ such that for all $k\in\N_0$, 
\bas{\1_{\{Z_{-k}^2\ge\delta\}}\ge I_k.
}
We consider the geometrically distributed stopping time
\bas{
T=\inf\{k\in\N_0:   I_k=1\}.
}
One has that
$
v(\mathcal Z)\ge  (1-\beta) \beta^T \delta
$
so that
\bas{
\E[v(\mathcal Z)^{-p}]\le ((1-\beta)\delta)^{-p} \,\E[\beta^{-pT}] =((1-\beta)\delta)^{-p} \,\frac{1-q}{1-\beta^{-p}q}.
}
\end{proof}

\section{Proof of the theorem in the introduction (\cref{thm-1} in \cref{sec:introduction})}
\label{sec:proof_theorem_introduction}

In this section, we prove Theorem~\ref{thm-1}. We fix $d=1$ and denote by $U$  a  uniformly bounded random variable. For every $M\in\N$, let  $(\IX_M,\IU)$ be the innovation with $\IX_M \colon \R^\infty\times \R\to\R$ given by
\begin{equation}
\IX_M(u,\theta)=\frac 1M\sum_{i=1}^M u^{(i)}-\theta
\end{equation}
and $\IU=(U^{(i)})_{i\in\N}$ being a sequence of independent $U$-distributed random variables.
Let $(\theta^{[M]}_n)_{n\in\N_0}$ be an Adam algorithm with innovation $(\IX_M,\IU)$ and denote by $(\IU_k)_{k\in-\N_0}$ a sequence of independent copies of $\IU$ driving the Adam algorithm. We can represent the 
related Adam vector field $ f_M \colon \R \to \R $ as
\bas{
f_M(\theta)= (1-\alpha) \E\Bigl[\frac1{\epsilon + \sqrt{(1-\beta)\sum_{k\in-\N_{0}}\beta^{-k}\IX_M(\IU_{k},\theta)^{2}} } \sum_{k\in-\N_{0}}\alpha^{-k}     \IX_M(\IU_{k},\theta)\Bigr].
}
For $\rho\in[0,1)$ and $\theta\in\R$, let
\bas{
\mathbb M_M^\rho(\theta)=(1-\rho)\sum_{k\in-\N_{0}}\rho^{-k}\IX_M(\IU_k,\theta)\text{ \ and \ }  \mathbb V_M(\theta)=(1-\beta)\sum_{k\in-\N_{0}}\beta^{-k}\IX_M(\IU_{k},\theta)^{2}.
}
Provided that we can change differentiation and integration we get that
\bas{\label{eq8346}
f'_M(\theta)=- \E\Bigl[\frac1{\epsilon+\sqrt{\IV_M({\theta})}} \Bigr] +\E\Bigl[ \IM^\alpha_M({\theta})\IM^\beta_M({\theta})\frac 1{(\epsilon+\sqrt{\IV_M(\theta)})^{2}}\frac 1{\sqrt{\IV_M({\theta})}}\Bigr].
}
As we will verify below (see~(\ref{eq:2354681})) the term in both expectations is uniformly bounded and continuous which justifies the latter identity.

We first verify that on an arbitrary compact set the vector field $f_M$ is decreasing as long as $M$ is sufficiently large.

\begin{lemma}\label{le:12472}
 Let $V\subset \R$ be a compact set. There exists $M_0\in\N$ such that \bas{
\sup\{ f'_M(\theta): M\ge M_0, \theta\in V\} <0.
} 
\end{lemma}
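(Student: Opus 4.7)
I would argue by contradiction. Suppose the stated supremum were non-negative for every $M_0\in\N$; I could then extract sequences $M_n\to\infty$ and $\theta_n\in V$ with $\liminf_{n\to\infty}f_{M_n}'(\theta_n)\ge 0$ and, by compactness of $V$, pass to a subsequence along which $\theta_n\to\theta^*\in V$. The plan is to show that $f_{M_n}'(\theta_n)$ converges to a strictly negative limit, which forces the contradiction.

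The first preliminary step consists of two a priori bounds on the random integrands in~\eqref{eq8346}. Writing $\bar U_{M,k}:=\frac{1}{M}\sum_{i=1}^M U_k^{(i)}$ so that $\IX_M(\IU_k,\theta)=\bar U_{M,k}-\theta$, and noticing that the weights $(1-\beta)\beta^{-k}$, $k\in-\N_0$, sum to one, Jensen's inequality yields $|\IM_M^\beta(\theta)|^2\le \IV_M(\theta)$. A Cauchy--Schwarz argument identical to the one used in the proof of \cref{le:23456} (split $\alpha^{-k}=(\alpha/\sqrt{\beta})^{-k}\sqrt{\beta}^{-k}$ and exploit $\alpha<\sqrt{\beta}$) gives $|\IM_M^\alpha(\theta)|\le C_\alpha\sqrt{\IV_M(\theta)}$ with $C_\alpha:=(1-\alpha)/\sqrt{(1-\beta)(1-\alpha^2/\beta)}$. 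Combining the two bounds, the integrand of the second expectation in~\eqref{eq8346} is dominated by $C_\alpha \sqrt{\IV_M(\theta)}/(\epsilon+\sqrt{\IV_M(\theta)})^2\le C_\alpha/(4\epsilon)$ and the integrand of the first expectation by $1/\epsilon$, supplying dominating functions for all subsequent applications of dominated convergence.

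Next I would identify the almost sure limits along $(M_n,\theta_n)$. Setting $\mu:=\E[U]$, the strong law of large numbers applied to each of the countably many independent i.i.d.\ sequences $(U_k^{(i)})_{i\in\N}$, $k\in-\N_0$, produces a single full measure event on which $\bar U_{M_n,k}\to\mu$ simultaneously for every $k\in-\N_0$. On this event the uniform boundedness of $U$ (hence of every $\bar U_{M,k}-\theta$ for $\theta$ in the compact set $V$) combined with dominated convergence applied term-by-term to the geometric series defining $\IV$, $\IM^\alpha$, $\IM^\beta$ yields $\IV_{M_n}(\theta_n)\to (\mu-\theta^*)^2$ and $\IM_{M_n}^\alpha(\theta_n),\IM_{M_n}^\beta(\theta_n)\to \mu-\theta^*$ almost surely. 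When $\theta^*\ne\mu$, the dominating functions of the preceding step allow passage to the limit in~\eqref{eq8346}, giving
\[f_{M_n}'(\theta_n)\to -\frac{1}{\epsilon+|\mu-\theta^*|}+\frac{|\mu-\theta^*|}{(\epsilon+|\mu-\theta^*|)^2}=-\frac{\epsilon}{(\epsilon+|\mu-\theta^*|)^2}<0.\]

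The delicate case, and the main obstacle of the argument, is $\theta^*=\mu$, where $\sqrt{\IV_{M_n}(\theta_n)}$ degenerates to zero and the pointwise factor $1/\sqrt{\IV_M}$ in the second expectation blows up. This is precisely where the bound $|\IM_M^\beta|\le\sqrt{\IV_M}$ becomes essential: it cancels the singular factor, so that the second expectation is dominated in absolute value by $\E[C_\alpha\sqrt{\IV_{M_n}(\theta_n)}/(\epsilon+\sqrt{\IV_{M_n}(\theta_n)})^2]$, whose integrand is bounded by $C_\alpha/(4\epsilon)$ and converges almost surely to zero because $\IV_{M_n}(\theta_n)\to 0$; dominated convergence then forces the second expectation to vanish. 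The first expectation converges boundedly to $1/\epsilon$, so $f_{M_n}'(\theta_n)\to -1/\epsilon<0$ in this case too. In either case the limit is strictly negative, contradicting $\liminf_n f_{M_n}'(\theta_n)\ge 0$ and closing the argument.
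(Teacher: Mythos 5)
Your argument is correct and follows essentially the same route as the paper: uniform domination of the two integrands in \cref{eq8346} via Cauchy--Schwarz and Jensen, convergence of $\IM_M^\alpha$, $\IM_M^\beta$, $\IV_M$ along a convergent sequence $\theta_n\to\theta^*$, dominated convergence to obtain the strictly negative limit $-\epsilon/(\epsilon+|\E[U]-\theta^*|)^2$, and a compactness-based contradiction. The only (harmless) differences are that you work with almost sure convergence via the strong law rather than convergence in probability, and that you treat the degenerate case $\theta^*=\E[U]$ explicitly, where the paper's single limit formula silently relies on the same cancellation $|\IM_M^\beta|\le\sqrt{\IV_M}$ that you spell out.
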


We thus showed that for $M\ge M_0$ and the chosen $V$ one has that for every $x,y\in V$ with $x\ge y$, one has
\bas{
\langle f_M(x)-f_M(y),x-y\rangle =\int _y^x f_M'(r) (x-y)\, \dd r\le -c_1 (x-y)^2
}
which implies monotonicity of the Adam vector field.

\begin{proof}
We show that the integrands in the expectations of~(\ref{eq8346}) are uniformly bounded. 
By the Cauchy-Schwarz inequality, one has that
\bas{
|\IM_M^\alpha(\theta)|&=(1-\alpha)\Bigl|\sum_{k\in-\N_0} \alpha^{-k}\beta ^{k/2} \beta^{-k/2} \IX_M(\IU_k,\theta)\Bigr| \\
&\le (1-\alpha)\Bigl(\sum_{k\in-\N_0} \alpha^{-2k}\beta ^{k}\Bigr)^{1/2}\Bigl(\sum_{k\in-\N_0} \beta ^{-k} \IX_M(\IU_k,\theta)^2\Bigr)^{1/2}\\
&=  \frac{1-\alpha }{\sqrt{1-\alpha^2/\beta}\sqrt{1-\beta}} \sqrt{\IV_M(\theta)}
}
and by Jensen's inequality that
\bas{
\bigl|\IM_M^\beta(\theta)\bigr| \le \Bigl((1-\beta)\sum_{k\in-\N_{0}}\beta^{-k}\IX_M(\IU_{k},\theta)^2\Bigr)^{1/2}=\sqrt{\IV_M(\theta)}.
}
This implies that 
\bas{\label{eq:2354681}
\Bigl|\IM^\alpha_M({\theta})\IM^\beta_M({\theta}) \frac 1{(\sqrt{\IV_M({\theta})}+\epsilon)^{2}}\frac 1{\sqrt{\IV_M({\theta})}}\Bigr|\le \frac{1-\alpha }{\sqrt{1-\alpha^2/\beta}\sqrt{1-\beta}} \frac1{\sqrt {\IV_M(\theta)}+\epsilon}.
}
Next, take a convergent series $(\vartheta_M)_{M\in\N}$ and denote by $\vartheta$ its limit. Then for $\rho\in\{\alpha,\beta\}$
\bas{
\lim_{M\to\infty} \IM^\rho_M(\vartheta_M)= \E[U]-\vartheta,\text{ \ in probability,}
} 
and
\bas{
\lim_{M\to\infty} \IV_M(\vartheta_M)= (\E[U]-\vartheta)^2 ,\text{ \ in probability.}
}
This entails that, in probability,
\bas{
\lim_{M\to\infty} \frac1{\sqrt{\IV_M(\vartheta_M)}+\epsilon}= \frac1{|\E[U]-\vartheta|+\epsilon} 
}
and
\bas{
\lim_{M\to\infty} \IM^\alpha_M({\vartheta_M})\IM^\beta_M({\vartheta_M}) \frac 1{(\sqrt{\IV_M({\vartheta_M})}+\epsilon)^{2}}\frac 1{\sqrt{\IV_M({\vartheta_M})}}= \frac{|\E[U]-\vartheta| }{(|\E[U]-\vartheta|+\epsilon)^2}.
}
With dominated convergence, we thus get that
\bas{\label{eq9823564}
\lim_{M\to\infty} f'_M(\vartheta_M)= -\frac {1}{|\E[U]-\vartheta|+\epsilon}\Bigl(1- \frac{|\E[U]-\vartheta|} {|\E[U]-\vartheta|+\epsilon}\Bigr)<0.
}

We prove the statement by contradiction. If the statement would not be true for every choice of $M_0$ 
 there would exist a sequence $(M_k)_{k\in\N}$ of integers diverging to infinity and a $V$-valued sequence $(\vartheta_k)_{k\in\N}$ with
\bas{
\limsup_{k\to\infty} f'_{M_k}(\vartheta_k) \ge 0.
} 
By compactness of $V$, we can assume that $(\vartheta_k)$ is convergent with a limit $\theta\in V$ (if the sequence is not convergent, we can instead work with a convergent subsequence). This contradicts our computations above, see~(\ref{eq9823564}).
\end{proof}

\begin{lemma}\label{le:12472-2} 
There exists $M_{0}\in\N$ and $\kappa\in(0,\infty)$ such that for every $M\in\N$ with $M\ge M_{0}$ there exists $\vartheta_{M}\in\R$ with $|\vartheta_{M}-\E[U]|\le \kappa M^{-1}$ and $f_M(\vartheta_M)=0$.
\end{lemma}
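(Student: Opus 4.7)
The plan is to show $|f_M(\E[U])|$ is of order $1/M$ and combine this with the uniform strict monotonicity of $f_M$ near $\E[U]$ given by Lemma~\ref{le:12472} in order to locate a zero $\vartheta_M$ via the intermediate value theorem. The degenerate case $\Var(U)=0$ can be handled directly: then $\IX_M(\IU,\E[U])\equiv 0$, hence $f_M(\E[U])=0$ and we set $\vartheta_M:=\E[U]$. Assume henceforth that $\sigma^{2}:=\Var(U)>0$.

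To estimate $f_M(\E[U])$ I would invoke Theorem~\ref{theo:732846} with base innovation $X(u,\theta)=u-\theta$ evaluated at $\theta=\E[U]$. There $\E[X(U,\theta)]=0$ holds, and all $\varphi_q=\E[|U-\E[U]|^q]$ are finite since $U$ is bounded, so the conclusion
\[
|f_M(\E[U])|\le C_1\,\E[|h'(\IV_M)|]\,M^{-2}+C_2\,\E[h''(\IV_M)]\bigl(M^{-5/2}+M^{-3}+M^{-4}\bigr)
\]
will apply once $\E[|h'(\IV_M)|]<\infty$ has been verified. Since $|h'(x)|\le \tfrac{1}{2\epsilon^{2}\sqrt x}$ and $h''(x)\le \tfrac{1}{4\epsilon^{2}x^{3/2}}+\tfrac{1}{2\epsilon^{3}x}$ (compare the proof of Theorem~\ref{thm:perturbation_analysis}), the remaining task is to bound the negative moments $\E[\IV_M^{-p}]$ for $p\in\{1/2,\,1,\,3/2\}$.

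For this I would apply Lemma~\ref{le:2341} to $\IV_M(\E[U])=v(\mathcal Z)$ with $\mathcal Z=(Z_k)_{k\in-\N_0}$, $Z_k=\IX_M(\IU_k,\E[U])$ i.i.d., choosing $\delta=c/M$ and the strongest requirement $q<\beta^{3/2}$ so that one choice of $q$ serves all three exponents simultaneously. Verifying the hypothesis $\P(Z_k^{2}<c/M)\le q$ is the key analytical step: by the central limit theorem applied to the bounded i.i.d.\ sample $U^{(1)},\dots,U^{(M)}$, the variable $\sqrt M\,Z_k/\sigma$ converges in distribution to a standard normal, so $\P(Z_k^{2}<c/M)\to\P(\chi^{2}_{1}<c/\sigma^{2})$, which tends to $0$ as $c\downarrow 0$. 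Fixing $c$ small enough that this limit lies strictly below $\beta^{3/2}$ and absorbing the finite-$M$ correction through Berry--Esseen (applicable by boundedness of $U$) yields the hypothesis for all sufficiently large $M$, and Lemma~\ref{le:2341} then delivers $\E[\IV_M^{-p}]\le C_p M^{p}$, hence $\E[|h'(\IV_M)|]\le C\sqrt M$ and $\E[h''(\IV_M)]\le C' M^{3/2}$.

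Substituting back into the Theorem~\ref{theo:732846} bound, the dominant contribution is $M^{3/2}\cdot M^{-5/2}=M^{-1}$, giving $|f_M(\E[U])|\le \kappa_0/M$ for some $\kappa_0\in(0,\infty)$ and all sufficiently large $M$. Fixing a compact neighborhood $V$ of $\E[U]$, Lemma~\ref{le:12472} provides $M_0'\in\N$ and $c_1\in(0,\infty)$ with $f_M'(\theta)\le -c_1$ for all $\theta\in V$ and $M\ge M_0'$; combined with the estimate on $f_M(\E[U])$ this forces $f_M(\E[U]-\kappa_0/(c_1 M))>0$ and $f_M(\E[U]+\kappa_0/(c_1 M))<0$ once $M$ is large enough that this interval lies inside $V$, and the intermediate value theorem produces $\vartheta_M$ with $f_M(\vartheta_M)=0$ and $|\vartheta_M-\E[U]|\le \kappa M^{-1}$ for $\kappa:=\kappa_0/c_1$. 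The main obstacle is the negative-moment estimate: since $\IV_M$ is built from i.i.d.\ quantities of typical size $1/\sqrt M$, controlling $\E[\IV_M^{-3/2}]$ uniformly in $M$ requires ruling out pathological concentration of $\IV_M$ near zero, which is precisely what the combination of Lemma~\ref{le:2341} and the central limit theorem supplies.
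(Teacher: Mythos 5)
Your proposal is correct and follows essentially the same route as the paper's proof: bound $|f_M(\E[U])|$ by $O(M^{-1})$ via \cref{theo:732846} combined with \cref{le:2341} and the central limit theorem to control the negative moments of $\IV_M$, then use the uniform negativity of $f_M'$ from \cref{le:12472} and the intermediate value theorem to locate a zero within distance $O(M^{-1})$ of $\E[U]$. The only cosmetic differences (your extra $x^{-1}$ term in the $h''$ bound, hence the additional $p=1$ moment, and the optional Berry--Esseen refinement, which the plain CLT already renders unnecessary) do not change the argument.
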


\begin{proof} Set $u=\E[U]$.
Without loss of generality we can assume that $ U$ has a strictly-positive variance. (Otherwise, $f_M(u)=0$ and we can choose $\vartheta_M=u$.) To apply Theorem~\ref{theo:732846} we need estimates for $\E[|h'(\mathbbm V_M)|]$ and $\E[h''(\mathbbm V_M)]$.

Pick $q\in(0,\beta^{3/2})$.
The classical central limit theorem implies that
\bas{
\sqrt M \,\IX_M(\IU,\theta) \Rightarrow \mathcal N(0, \var(U))
}
as $M\to\infty$. Thus we can choose $M_0'\in\N$ and $\delta'\in(0,\infty)$ so that for all $M\in\N$ with $M\ge M_0'$ one has
\bas{
\P(|\sqrt M \, X_M (U,\theta)|< \sqrt {\delta'})\le q \text{ \ or, equivalently, \ }
\P( X_M (U,\theta)^2< {\delta'}/M)\le q.
}
With Lemma~\ref{le:2341} we thus obtain that for $p\in\{1/2,3/2\}$
\bas{\label{eq:345861}
\E[\mathbb V_M^{-p}]\le \Bigl(\frac\beta{1-\beta}\Bigr)^p\frac{1-q}{\beta^p-q} (M/\delta')^p.
}
In particular, the latter expectations are finite and 
we are now in the position to apply
Theorem~\ref{theo:732846}: there exists a constant $\kappa_1$ only depending on $\alpha$, $\beta$ and the first five moments of $U$ so that for every $M\in\N$ with $M\ge M_0'$ one has that
\bas{
|f_M(u)|\le \kappa_1\bigl( \E[|h'(\mathbb V_M)|]\,M^{-2}+ \E[h''(\mathbb V_M)] \,M^{-5/2}\bigr).
}
Note that for all $x\in(0,\infty)$, $|h'(x)|\le \epsilon^{-2} x^{-1/2}$ and $h''(x)\le \epsilon^{-2} x^{-3/2}$ so that
\bas{
|f_M(u)|\le \kappa_1\epsilon^{-2} \bigl( \E[\mathbb V_M^{-1/2}]\,M^{-2}+ \E[\mathbb V_M^{-3/2}] \,M^{-5/2}\bigr).
}
In view of (\ref{eq:345861}) we get  that
\bas{|f_M(u)|\le \kappa_2 \,M^{-1},
}
where $\kappa_2$ is a constant only depending on $\alpha,\beta,\epsilon,\delta'$  and $q$ and the first five moments of $U$.

By Lemma~\ref{le:12472}, we can choose  $M_0''\in\N$  such that 
\bas{
-r:=\sup\{ f'_M(x): x\in[u-1,u+1]\text{ and } M\ge M_0''\}<0.
}
Now choose $M_0 =\max \{M_0', M_0'',\kappa_2/r \}$ and note that for $M\in\N$ with $M\ge M_0$ one has
\bas{
f_M(u+\sfrac{\kappa_2}r M^{-1})\le |f_M(u)|-r \sfrac{\kappa_2}r M^{-1}\le 0
}
and 
\bas{
f_M(u-\sfrac{\kappa_2}r M^{-1})\ge -|f_M(u)|+r \sfrac{\kappa_2}r M^{-1}\ge 0.
}
By the intermediate value theorem, there exists $\vartheta_M\in [u-\sfrac{\kappa_2}r M^{-1},u+\sfrac{\kappa_2}r M^{-1}]$ with $f_M(\vartheta_M)=0$.
\end{proof}

\begin{proof}[Proof of Theorem~\ref{thm-1}]
Recall that by assumption, the random variable $U$ is uniformly bounded, say by $C_1\in(0,\infty)$.
By \cite[Corollary 2.5]{DereichGraeberJentzen2024arXiv_non_convergence} there exists a constant $C_2$ such that for every $M\in\N$, the respective Adam algorithm $(\theta_n^{[M]})$  satisfies for all $n\in\N_0$
\bas{
|\theta^{[M]}_n|\le C_2.
}
This entails that $X_n^{[M]}:=\IX_M(\IU_n,\theta^{[M]}_{n-1})$ $(n\in\N)$ is bounded by $C_1+C_2$ so that for $\bX^{[M]}_n=(\1_{\{n+k>0\}} X_{n+k}^{[M]})_{k\in-\N_0}$ one has that
$\|\bX^{[M]}_n\|_{\ell_\varrho}\le (C_1+C_2)\|\varrho\|_{\ell_1}$.

By Lemmas~\ref{le:12472} and~\ref{le:12472-2}, there exists $M_0\in\N$, $\kappa\in(0,\infty)$ and a $\R$-valued sequence $(\vartheta_M)_{M\ge M_0}$ such that $f_M(\vartheta_M)=0$, $|\vartheta_M-\E[U]|\le (\kappa M^{-1})\wedge C_2$ and
\bas{
-c_1:=\sup\bigl\{f_M'(\theta): M\ge M_0, \theta\in[-C_2,C_2]\bigr\}<0.
}
This entails that for every $\theta\in[-C,C]$,
\bas{\label{eq:934657}
(f_M(\theta)-f_M(\vartheta_M))(\theta-\vartheta_M)\le -c_1\,(\theta-\vartheta_M)^2.
}

We will apply Theorem~\ref{thm-2} with $\alpha,\beta,\epsilon$ and $(\gamma_n)$ as in the statement of Theorem~\ref{thm-1}. Moreover, let $c_1$ as above, $c_2=0$, $p\in(2,\infty)$, $c=c_1/2$, $\varepsilon=1$ and $\mathcal K=C_1+C_2+1$.

We choose $\mathfrak n\in\N_0$  and $\eta\in(0,\infty)$ with $\sqrt{\gamma_{\mathfrak n+1}}\,(C_1+C_2)\|\varrho\|_{\ell_1}  \le 1$ so that the statement in Theorem~\ref{thm-2} is true for the chosen set of parameters. 
 Fix $M\in\N$ with $M\ge M_0$ and let for $\zeta\in\R$, $m\in\R$ and $\nu\in[0,\infty)$, 
$\theta^{[\zeta,\mathfrak n,m,\nu]}=(\theta_n^{[M,\zeta,\mathfrak n,m,\nu]})_{n\ge \mathfrak n}$ denote the Adam algorithm with innovation $(\IX_M,\IU)$ started at time $\mathfrak n$ in $(\zeta,m,\nu)$.
Next, we will verify assumptions 1.-4. for appropriate $M$-dependent Adam algorithms with $M\ge M_0$. Fix $M\ge M_0$ and $V=[-C_2,C_2]$.

1.)  We have for every $\theta\in V$ that
\bas{
\E[|\IX_M(\IU,\theta)|^p]^{1/p}\le C_1+C_2\le\cK \text{ \ and \ } \E[|\IX_M(\IU,\theta)-\IX_M(\IU,\theta')|^p]^{1/p}=|\theta-\theta'|\le \cK |\theta-\theta'|.
}

2.) Note that for the random terms $m_{\mathfrak n}$ and $v_{\mathfrak n}$ one has 
\bas{ \sqrt{\gamma_{\mathfrak n+1}}\,(m_{\mathfrak n},v_{\mathfrak n})_{\ell_\varrho}\le \sqrt{\gamma_{\mathfrak n+1}}\, \|\bX^{[M]}({\mathfrak n})\|_{\ell_\varrho}\le \sqrt{\gamma_{\mathfrak n+1}}\, (C_1+C_2) \|\varrho\|_{\ell_1}\le1\le  \cK.
}

3.) We choose $\Psi_M:[t_{n_0},\infty)\to\R$ as constant function $\Psi^M\equiv \vartheta_M$ and note that it solves the ODE $\dot \Psi^M_t=0=f_M(\Psi^M_t)$.

4.) Choose $\mathfrak R_t\equiv\infty$ and note that for every $x\in V$, $(f(x)-f(\Psi_t^M))(x-\Psi_t^M)\le -c_1 \,|x-\Psi^M_t|^2$ is just property~(\ref{eq:934657}) above.

We conclude that for $n\ge \mathfrak n$
\bas{
\E[|\theta_n^{[M]}-\vartheta _M|^p]^{1/p}&= \E[\E[|\theta^{[M]}_n-\vartheta _M|^p|\cF_{\mathfrak n}]]\\
&= \E\Bigl[\E\bigl[\theta_n^{[M,\zeta,\mathfrak n,m,\nu]}-\vartheta_M|^p\bigr]\Big|_{(\zeta,m,\nu)=(\theta_{\mathfrak n},m_{n_0},v_{n_0})}  \Bigr]^{1/p}\\
&\le \E\Bigl[\Bigl(\eta +\Bigl((1+\varepsilon)\frac{|\theta_{\mathfrak n}-\Psi_{t_{n_{0}}}|}{\sqrt{\gamma_{\mathfrak n+1}}}+\eta \sqrt{(m_{\mathfrak n},v_{\mathfrak n})_{\ell_{\varrho}}}\Bigr) e^{-c (t_{n}-t_{\mathfrak n})}\Bigr)^p
\Bigr]^{1/p} \sqrt{\gamma_{n+1}}\\
&\le (\eta+(4C_2 \gamma_{\mathfrak n+1}^{-1/2}]+\eta  )e^{-c(t_n-t_{\mathfrak n})})\sqrt{\gamma_{n+1}}.
}
Moreover, $\E[|\theta_n^{[M]}-\vartheta|^p]^{1/p}\le 2C_2$ for all $n\in\N$. The latter two bounds do not depend on the choice of $M$ which proves estimate (b) for an appropriately chosen $\eta$.

To prove (a) we choose $p\in(2,\infty)$ with $\sum_{n\in\N_0} \gamma_n^{p/2}<\infty$. Using property (b) (with the respective $\eta$) we conclude that
\bas{
\E\Bigl[\sum_{n\in\N_0}|\theta_n^{[M]}-\vartheta_M|^p \Bigr] \le \eta^p \sum_{n\in\N_0} \gamma_n^{p/2}<\infty
}
which implies that, almost surely, $\sum_{n\in\N_0}|\theta_n^{[M]}-\vartheta_M|^p<\infty$ and that $(\theta^{[M]}_n)$ tends to $\vartheta_M$, almost surely.
\end{proof}

\subsection*{Acknowledgements}
Shokhrukh Ibragimov is gratefully acknowledged for having brought 
useful related research findings to our attention. 
This work has been partially funded by the European Union (ERC, MONTECARLO, 101045811). 
The views and the opinions expressed in this work are however those of the authors only 
and do not necessarily reflect those of the European Union 
or the European Research Council (ERC). Neither the European Union nor 
the granting authority can be held responsible for them. 
In addition, this work has been partially funded by the Deutsche Forschungsgemeinschaft (DFG, German Research Foundation) 
under Germany's Excellence Strategy EXC 2044--390685587, Mathematics Münster: Dynamics--Geometry--Structure.

\bibliographystyle{acm}
\bibliography{bibfile}

\end{document}